\newtheorem{thm}{Theorem}[section]
\newtheorem*{thm*}{Theorem}
\newtheorem{cor}[thm]{Corollary}
\newtheorem{lem}[thm]{Lemma}
\newtheorem{prop}[thm]{Proposition}
\newtheorem*{prop*}{Proposition}
\newtheorem*{conj*}{Conjecture}
\newtheorem*{dfn*}{Definition}
\theoremstyle{definition}
\newtheorem{rem}[thm]{\textbf{Remark}}
\newtheorem*{rmk*}{Remark}
\newtheorem*{fact*}{Fact}
\theoremstyle{proof}
\newcommand{\norm}[1]{\left\Vert#1\right\Vert}
\newcommand{\snorm}[1]{\Vert#1\Vert}
\newcommand{\abs}[1]{\left\vert#1\right\vert}
\newcommand{\set}[1]{\left\{#1\right\}}
\newcommand{\brac}[1]{\left(#1\right)}
\newcommand{\scalar}[1]{\left \langle #1 \right \rangle}
\newcommand{\sscalar}[1]{\langle #1 \rangle}
\newcommand{\Real}{\mathbb{R}}
\newcommand{\eps}{\varepsilon}
\newcommand{\E}{\mathcal{E}}
\newlength{\defbaselineskip}
\newcommand{\setlinespacing}[1]%
           {\setlength{\baselineskip}{#1 \defbaselineskip}}
\numberwithin{equation}{section}
\newcommand{\B}{\mathcal{B}}
\renewcommand{\P}{\mathcal{P}}
\def \F {\mathcal{F}}
\def \J {\mathcal{N}}
\begin{document}

\title{On the role of Convexity in Functional and Isoperimetric Inequalities}

\author{Emanuel Milman\textsuperscript{1}}

\footnotetext[1]{School of Mathematics,
Institute for Advanced Study, Einstein Drive, Simonyi Hall, Princeton, NJ 08540, USA.
Email: emilman@math.ias.edu.\\
Supported by NSF under agreement \#DMS-0635607.\\
2000 Mathematics Subject Classification: 32F32, 26D10, 46E35, 31C15.}

\maketitle

\begin{abstract}
This is a continuation of our previous work
\cite{EMilmanRoleOfConvexityArxiv}. It is well known that various
isoperimetric inequalities imply their functional ``counterparts'',
but in general this is not an equivalence. We show that under
certain convexity assumptions (e.g. for log-concave probability
measures in Euclidean space), the latter implication can in fact be
reversed for very general inequalities, generalizing a reverse form of Cheeger's inequality
due to Buser and Ledoux. We develop a coherent single framework for passing between
isoperimetric inequalities, Orlicz-Sobolev functional inequalities
and capacity inequalities, the latter being notions introduced by
Maz'ya and extended by Barthe--Cattiaux--Roberto. As an application,
we extend the known results due to the latter authors about the
stability of the isoperimetric profile under tensorization, when
there is no Central-Limit obstruction. As another application, we
show that under our convexity assumptions, $q$-log-Sobolev
inequalities ($q \in [1,2]$) are equivalent to an appropriate family
of isoperimetric inequalities, extending results of Bakry--Ledoux
and Bobkov--Zegarlinski.
Our results extend to the more general setting of Riemannian manifolds with density which satisfy the $CD(0,\infty)$ curvature-dimension condition of Bakry--\'Emery.
\end{abstract}

\section{Introduction}

Let $(\Omega,d,\mu)$ denote a metric probability space. More
precisely, we assume that $(\Omega,d)$ is a separable metric space
and that $\mu$ is a Borel probability measure on $(\Omega,d)$ which
is not a unit mass at a point. Although it is not essential for the
ensuing discussion, it will be more convenient to specialize to the
case where $\Omega$ is a complete smooth oriented $n$-dimensional
Riemannian manifold $(M,g)$, $d$ is the induced geodesic distance,
and $\mu$ is an absolutely continuous measure with respect to the
Riemannian volume form $vol_M$ on $M$. This work continues the study
of interplay between the metric $d$ and the measure $\mu$ initiated
in \cite{EMilmanRoleOfConvexityArxiv}. There are various different
ways to measure this relationship, which may be typically arranged
according to strength, forming a hierarchy. In this work, we will be
primarily concerned with two such different ways.

\subsection{The Hierarchy}

The first way is by means of an isoperimetric inequality. Recall
that Minkowski's (exterior) boundary measure of a Borel set $A
\subset \Omega$, which we denote here by $\mu^+(A)$, is defined as:
\[
 \mu^+(A) := \liminf_{\eps \to 0} \frac{\mu(A_{\eps,d}) - \mu(A)}{\eps}~,
\]
where $A_{\eps,d} := \set{x \in \Omega ; \exists y \in A \;\; d(x,y)
< \eps}$ denotes the $\eps$-extension of $A$ with respect to the
metric $d$. It is clear that this boundary measure is a natural
generalization of the notion of surface area to the metric
probability space setting. An isoperimetric inequality measures the
relation between $\mu^+(A)$ and $\mu(A)$ by means of the
isoperimetric profile $I = I_{(\Omega,d,\mu)}$, defined as the
pointwise maximal function $I : [0,1] \rightarrow \Real_+$, so that:
\begin{equation} \label{eq:I-definition}
\mu^+(A) \geq I(\mu(A)) ~,
\end{equation}
for all Borel sets $A \subset \Omega$. Although it is
possible to guarantee under very general conditions that the function $I$ is symmetric about the
point $1/2$, we will not assume this, and instead define $\tilde{I} = \tilde{I}_{(\Omega,d,\mu)}$ as the function
$\tilde{I}:[0,1/2] \rightarrow \Real_+$ given by:
\[
\tilde{I}(t) := \min(I(t),I(1-t)) ~.
\]

A well known example of an isoperimetric inequality was defined by
Cheeger \cite{CheegerInq}. We will say that our space satisfies
\emph{Cheeger's isoperimetric inequality}, if there exists a
constant $D>0$ so that $\tilde{I}_{(\Omega,d,\mu)}(t) \geq D t$ for
all $t \in [0,1/2]$ ; we denote the best constant $D$ by $D_{Che} =
D_{Che}(\Omega,d,\mu)$. Another useful example pertains to the
standard Gaussian measure $\gamma$ on $(\Real,\abs{\cdot})$, where
$\abs{\cdot}$ is the Euclidean metric. We will say that our space
satisfies a \emph{Gaussian isoperimetric inequality}, if there
exists a constant $D>0$ so that $I_{(\Omega,d,\mu)}(t) \geq D
I_{(\Real,\abs{\cdot},\gamma)}(t)$ for all $t \in [0,1]$ ; we denote
the best constant $D$ by $D_{Gau}$. It is known that
$\tilde{I}_{(\Real,\abs{\cdot},\gamma)}(t) \simeq t \log^{1/2}(1/t)$
uniformly on $t \in [0,1/2]$, where we use the notation $A \simeq B$
to signify that there exist universal constants $C_1,C_2>0$ so that
$C_1 B \leq A \leq C_2 B$. Unless otherwise stated, all of the
constants throughout this work are universal, independent of any
other parameter, and in particular the dimension $n$ in the case of
an underlying manifold. The Gaussian isoperimetric inequality can
therefore be equivalently stated as asserting that there exists a
constant $D>0$ so that $\tilde{I}_{(\Omega,d,\mu)}(t) \geq D t
\log^{1/2}(1/t)$ for all $t \in [0,1/2]$.

\medskip

A second way to measure the interplay between $d$ and $\mu$ is given
by functional inequalities. Let $\F = \F(\Omega,d)$ denote the space
of functions which are Lipschitz on every ball in $(\Omega,d)$ - we
will call such functions ``Lipschitz-on-balls'' - and let $f \in
\F$. We will consider functional inequalities which compare between
$\norm{f}_{N_1(\mu)}$ and $\norm{\abs{\nabla f}}_{N_2(\mu)}$, where
$N_1, N_2$ are some norms associated with the measure $\mu$, like
the $L_p(\mu)$ norms, or some other more general \emph{Orlicz
quasi-norms} associated to the class $\J$ of increasing continuous
functions mapping $\Real_+$ onto $\Real_+$ (see Subsection \ref{subsect:definitions} for
precise definitions). Here, the effect of the metric $d$ is via the
Riemannian metric $g$ which is used to measure $\abs{\nabla
f}:=g(\nabla f,\nabla f)^{1/2}$, although more general ways exist to
define $\abs{\nabla f}$ in the non-manifold setting. There is
clearly no point to test constant functions, so it will be natural
to require that either the expectation $E_\mu f$ or median $M_\mu f$
of $f$ are 0. Here $E_\mu f = \int f d\mu$ and $M_\mu f$ is a value
so that $\mu(f \geq M_\mu f) \geq 1/2$ and $\mu(f \leq M_\mu f) \geq
1/2$.
\begin{dfn*}
We will say that the space $(\Omega,d,\mu)$ satisfies an $(N,q)$
Orlicz-Sobolev inequality ($N \in \J, q \geq 1$) if:
\begin{equation} \label{eq:OS-inq-def}
\exists D>0 \; \text{ s.t. } \; \forall f \in \F \;\;\;\;  D \norm{f
- M_\mu f}_{N(\mu)} \leq \norm{\abs{\nabla f}}_{L_q(\mu)} ~.
\end{equation}
\end{dfn*}
\noindent A similar (yet different) definition was given by Roberto
and Zegarlinski \cite{RobertoZegarlinski} in the case $q=2$
following the work of Maz'ya \cite[p. 112]{MazyaBook}. Our preference to use the median $M_\mu$ in our definition (in place of the more standard expectation $E_\mu$) is immaterial whenever $N$
is a convex function (see Lemma \ref{lem:E-M}).
\medskip

When $N(t)=t^p$, in which case $N(\mu)$ is just the usual $L_p(\mu)$
norm, we will refer to the inequality (\ref{eq:OS-inq-def}) as a
\emph{$(p,q)$ Poincar\'e inequality}. If in addition $M_\mu$ in (\ref{eq:OS-inq-def}) is replaced by
$E_\mu$, the case $p=q=2$ is then just the classical \emph{Poincar\'e inequality}, and we denote the best constant in this inequality by $D_{Poin}$. Similarly, the case $q=1, p=\frac{n}{n-1}$ corresponds to the \emph{Gagliardo--Nirenberg--Sobolev inequality}, and a limitting case when $n$ tends to infinity is the so-called \emph{log-Sobolev inequality}.
More generally, we say that our space satisfies a \emph{$q$-log-Sobolev
inequality} ($q \in [1,2]$), if there exists a constant $D>0$ so
that:
\begin{equation} \label{eq:Intro-q-log-Sob}
\forall f \in \F \;\;\;\;  D \brac{\int |f|^q \log |f|^q d\mu - \int
|f|^q d\mu \log(\int |f|^q d\mu)}^{1/q} \leq \norm{\abs{\nabla
f}}_{L_q(\mu)} ~.
\end{equation}
The best possible constant $D$ above is denoted by $D_{LS_q} =
D_{LS_q}(\Omega,d,\mu)$. Although these inequalities do not
precisely fit into our announced framework, it follows from the work
of Bobkov and Zegarlinski \cite{BobkovZegarlinski} that they are in
fact equivalent to some corresponding Orlicz-Sobolev inequalities
(see Section \ref{sec:general}). Various other functional
inequalities admit an equivalent (up to universal constants)
formulation using an appropriate Orlicz norm $N(\mu)$ on the left
hand side of (\ref{eq:OS-inq-def}). We refer the reader to the
recent paper of Barthe and Kolesnikov \cite{BartheKolesnikov} and
the references therein for an account of several other types of
functional inequalities.

\medskip

It is well known that various isoperimetric inequalities imply their
functional ``counterparts''. It was shown by Maz'ya
\cite{MazyaCheegersInq1,MazyaCheegersInq2} and independently by
Cheeger \cite{CheegerInq}, to whom this is usually attributed, that
Cheeger's isoperimetric inequality implies Poincar\'e's inequality:
$D_{Poin} \geq D_{Che}/2$ (Cheeger's inequality). It was first
observed by M. Ledoux \cite{LedouxBusersTheorem} that a Gaussian
isoperimetric inequality implies a $2$-log-Sobolev inequality:
$D_{LS_2} \geq c D_{Gau}$, for some universal constant $c>0$. This
has been later refined by Beckner (see
\cite{LedouxLectureNotesOnDiffusion}) using an equivalent functional
form of the Gaussian isoperimetric inequality due to S. Bobkov
\cite{BobkovFunctionalFormOfGaussianIsopInq,BobkovGaussianIsopInqViaCube}
(see also  \cite{BartheMaureyIsoperimetricInqs}): $D_{LS_2} \geq
D_{Gau} / \sqrt{2}$. The constants $2$ and $\sqrt{2}$ above are
known to be optimal.

\subsection{Reversing the Hierarchy}

In general, it is known that these implications \emph{cannot} be
reversed. For instance, using $([-1,1],\abs{\cdot},\mu_\alpha)$
where $d\mu_\alpha = \frac{1+\alpha}{2} |x|^\alpha dx$ on $[-1,1]$,
clearly $\mu_\alpha^+([0,1]) = 0$ so $D_{Che} = D_{Gau} = 0$,
whereas one can show that $D_{Poin},D_{LS_2} > 0$ for $\alpha \in
(0,1)$ using criteria for the Poincar\'e and $2$-log-Sobolev
inequalities on $\Real$ due to Kac and Krein
\cite{KacKreinVibratingString} (and also Artola, Talenti and
Tomaselli, cf. Muckenhoupt \cite{MuckenhouptHardyInq}) and Bobkov
and G\"{o}tze \cite{BobkovGotzeLogSobolev}, respectively. We
conclude that in order to have any chance of reversing the above
implications, we will need to add some additional assumptions, which
will prevent the existence of examples as above. As we will see
below, some type of convexity assumptions are a natural candidate.
We start with two important examples when $(M,g) =
(\Real^n,\abs{\cdot})$ and $\abs{\cdot}$ is some fixed Euclidean
norm:
\begin{itemize}
\item
$\Omega$ is an \emph{arbitrary} bounded convex domain in $\Real^n$
($n \geq 2$), and $\mu$ is the uniform probability measure on
$\Omega$.
\item
$\Omega = \Real^n$ ($n \geq 1$) and $\mu$ is an \emph{arbitrary}
absolutely continuous log-concave probability measure, meaning that
$d\mu = \exp(-\psi) dx$ where $\psi: \Real^n \rightarrow \Real \cup
\set{+\infty}$ is convex (we refer to the paper
\cite{Borell-logconcave} of C. Borell for more information).
\end{itemize}

In both cases, we will say that ``our convexity assumptions are
fulfilled''. More generally, we recall the following definition from
\cite{EMilmanRoleOfConvexityArxiv}:

\begin{dfn*}
We will say that our \emph{smooth convexity assumptions} are
fulfilled if:
\begin{itemize}
\item
$(M,g)$ denotes an $n$-dimensional ($n\geq 2$) smooth complete
Riemannian manifold or $(M,g)=(\Real,\abs{\cdot})$, and $\Omega =
M$.
\item
$d$ denotes the induced geodesic distance on $(M,g)$.
\item
$d\mu = \exp(-\psi)  dvol_M$,  $\psi \in C^2(M)$, and as tensor
fields on $M$:
\begin{equation} \label{eq:Intro-BE}
Ric_g + Hess_g \psi \geq 0 ~.
\end{equation}
\end{itemize}
We will say that our \emph{convexity assumptions} are fulfilled if
$\mu$ can be approximated in total-variation by measures
$\set{\mu_m}$ so that $(\Omega,d,\mu_m)$ satisfy our smooth
convexity assumptions.
\end{dfn*}

The condition (\ref{eq:Intro-BE}) is the well-known
Curvature-Dimension condition $CD(0,\infty)$, introduced by Bakry
and \'Emery in their celebrated paper \cite{BakryEmery} (in the more
abstract framework of diffusion generators). Here $Ric_g$ denotes
the Ricci curvature tensor and $Hess_g$ denotes the second covariant
derivative.

\medskip

It is known that under our convexity assumptions, the implications
stated in the previous subsection can be reversed: $D_{Che} \geq c_1
D_{Poin}$ and $D_{Gau} \geq c_2 D_{LS_2}$, for some universal
constants $c_1,c_2>0$. That Cheeger's inequality can be reversed was
first shown by Buser \cite{BuserReverseCheeger} when $\mu$ is
uniform on a closed manifold with $Ric_g \geq 0$, and was recently
strengthened and generalized by Ledoux
\cite{LedouxSpectralGapAndGeometry} to the Bakry--\'Emery abstract
setting, assuming our smooth convexity assumptions. That a
$2$-log-Sobolev inequality implies a Gaussian isoperimetric
inequality under these assumptions was first shown by Bakry and
Ledoux \cite[Section 4]{BakryLedoux} (see also Ledoux
\cite{LedouxSpectralGapAndGeometry}).

\subsection{The Results}

In this work, we generalize all of the above mentioned implications following
Ledoux's diffusion semi-group approach
\cite{LedouxSpectralGapAndGeometry} to a more general framework.
Such a program was initiated in our previous work
\cite{EMilmanRoleOfConvexityArxiv}, where it was first shown how to
use the $CD(0,\infty)$ condition via Ledoux's semi-group gradient
estimates to deduce isoperimetric inequalities from $(p,q)$
Poincar\'e inequalities. Contrary to previous approaches, which
could only deduce isoperimetric information from functional
inequalities with a $\norm{\abs{\nabla f}}_{L_q(\mu)}$ term with
$q=2$ (see \cite[p. 3]{BartheKolesnikov} and the references
therein), it was shown in \cite{EMilmanRoleOfConvexityArxiv} how to
handle arbitrary $q\geq 2$. In the case of $(p,q)$ Poincar\'e
inequalities, an easy reduction step in fact enables one to handle
arbitrary $q \geq 1$. In this work, we show how to deduce
isoperimetric inequalities from very general Orlicz-Sobolev
inequalities in the entire range $q \geq 1$.

The easier case of $q \geq 2$ is handled in Section
\ref{sec:semi-group}, by generalizing our argument for $(p,q)$
Poincar\'e inequalities from \cite{EMilmanRoleOfConvexityArxiv}.
Extending our results to the case $q \geq 1$ (which is very important for applications)
requires additional work, to which end we employ the notion of \emph{capacity}. Capacity
inequalities are certain functional formulations of isoperimetric
inequalities, which were introduced around 1960 by Maz'ya
\cite{MazyaSobolevImbedding,MazyaCapacities}, Federer and Fleming
\cite{FedererFleming}, and used by Bobkov and Houdr\'e in
\cite{BobkovHoudreMemoirs,BobkovHoudre}. Maz'ya's notion of
$q$-capacity for $q=2$ has recently been extended to the metric
probability space setting by Barthe, Cattiaux and Roberto in
\cite{BCRHard} (after being introduced in \cite{BartheRoberto}),
where it was used to deduce isoperimetric inequalities, and has
subsequently appeared in other works as well (e.g.
\cite{BCRSoft,RobertoZegarlinski,SodinLpIsoperimetry,EMilmanSodinIsoperimetryForULC}).
We recall the appropriate definitions in Section
\ref{sec:capacities}, and show that $q$-capacity inequalities are
equivalent in full generality to an appropriate weak-type variant of
these Orlicz-Sobolev inequalities (in the same sense that
$L_{p,\infty}$ is the weak-type $L_p$ quasi-norm). We also give a
very general condition for capacity inequalities to be equivalent to
the usual (non-weak) Orlicz-Sobolev inequalities, which we require for the sequel.
This extends a more restrictive condition (and partly implicit) obtained for $q=2$
in \cite{RobertoZegarlinski}, following a similar condition for
general $q$ in \cite{MazyaBook}.

In Section \ref{sec:general} we use capacities to extend our results
to the whole range $q \geq 1$. We also demonstrate that our
estimates are sharp, by showing that the isoperimetric inequalities
we obtain are in fact equivalent (up to universal constants) to the
functional inequalities used to derive them. To give a taste of the
type of results we obtain, we state the following theorem (see
Theorem \ref{thm:Orlicz-summary} for more details and a slightly
stronger version):

\begin{thm} \label{thm:Intro-Orlicz}
Let $1 \leq q \leq \infty$ and let $N$ denote a Young function, so that:
\begin{equation} \label{eq:Intro-extra}
 \frac{N(t)^{1/q}}{t} \text{ is non-decreasing } ~,~ \exists \alpha > (1/q - 1/2) \vee 0 \;\;\;\;
\frac{N(t^\alpha)}{t} \text{ is non-increasing} ~.
\end{equation}
Then under our convexity assumptions, the following statements are
equivalent:
\begin{enumerate}
 \item
\[
 \forall f \in \F \;\; D_1 \norm{f - M_\mu f}_{N(\mu)} \leq \norm{ \abs{\nabla f} }_{L_q(\mu)}
\]
\item
\[
 \tilde{I}(t) \geq D_2 \frac{t^{1-1/q}}{N^{-1}(1/t)} \;\; \forall t \in [0,1/2] ~,
\]
\end{enumerate}
where the best constants $D_1,D_2$ above satisfy:
\[
c_1 C_{\alpha,q} D_1 \leq D_2 \leq c_2 B_{\alpha,q} D_1 ~,
\]
with $c_1,c_2 > 0$ universal constants and $B_{\alpha,q},
C_{\alpha,q}$ depending explicitly on $\alpha, q$. In fact, the
convexity assumptions are not needed for the direction $(2)
\Rightarrow (1)$, and the assumptions (\ref{eq:Intro-extra}) are not
needed if $q \geq 2$ for the direction $(1) \Rightarrow (2)$.
\end{thm}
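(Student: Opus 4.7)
The plan is to establish the equivalence by interposing an intermediate $q$-capacity inequality between (1) and (2), following Maz'ya's philosophy as developed in Section \ref{sec:capacities}. I aim for the chain
$$
(2) \;\Longrightarrow\; (q\text{-capacity inq.}) \;\Longleftrightarrow\; (1),
$$
handling the harder arrow $(1) \Rightarrow (2)$ by two separate routes depending on whether $q \geq 2$ (direct semi-group) or $1 \leq q < 2$ (through capacity).

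For the direction $(2) \Rightarrow (1)$: apply the co-area formula to a test function $\Phi$ for an arbitrary capacity pair $(A,B)$ with $\mu(B) \leq 1/2$, and plug the isoperimetric hypothesis into each level set. Maz'ya's duality (for $q>1$; an infimum form for $q=1$) converts this into a $q$-capacity bound $\text{Cap}_q(A,B) \geq D_2^q \Psi(\mu(A))$ for an explicit profile $\Psi$ dictated by $N$ and $q$. The capacity inequality is then upgraded to the strong-type Orlicz-Sobolev inequality (1) via the equivalence established in Section \ref{sec:capacities}, using Maz'ya's truncation scheme applied to the dyadic layers $f_k := (|f - M_\mu f| - 2^k)_+ \wedge 2^k$; the summation of these contributions is precisely where the monotonicity ``$N(t^\alpha)/t$ non-increasing'' from (\ref{eq:Intro-extra}) enters, yielding the loss factor $B_{\alpha,q}$. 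No convexity is used.

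For $(1) \Rightarrow (2)$ in the easier range $q \geq 2$: test (1) directly on $f = P_t \mathbf{1}_A$, the heat semi-group applied to the indicator of a Borel set $A$ with $\mu(A) \leq 1/2$. Under $CD(0,\infty)$, Ledoux's semi-group gradient estimate controls $\|\,|\nabla P_t \mathbf{1}_A|\,\|_{L_q(\mu)}$ directly in $L_q$, while an elementary computation provides a lower bound on $\|f - M_\mu f\|_{N(\mu)}$ in terms of $\mu(A)$. Optimizing in $t > 0$ recovers (2), generalizing the $(p,q)$-Poincar\'e argument of \cite{EMilmanRoleOfConvexityArxiv}; no extra hypothesis on $N$ beyond being a Young function is required, which is why (\ref{eq:Intro-extra}) drops out in this range.

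The main obstacle is $(1) \Rightarrow (2)$ in the range $1 \leq q < 2$, where the semi-group naturally produces only an $L_2$ gradient bound. Here I would first reduce (1) to a $q$-capacity inequality via the equivalence of Section \ref{sec:capacities} (this direction contributes the loss $C_{\alpha,q}$), and then pass from the capacity inequality to the isoperimetric inequality by interpolating the $L_2$ gradient estimate against the trivial bound $\|P_t \mathbf{1}_A\|_\infty \leq 1$; the strict inequality $\alpha > 1/q - 1/2$ in (\ref{eq:Intro-extra}) is exactly what keeps the resulting H\"older exponents admissible for the Orlicz norm $N$. Tracking constants through each arrow yields the claimed $c_1 C_{\alpha,q} D_1 \leq D_2 \leq c_2 B_{\alpha,q} D_1$, and sharpness of each individual arrow — in particular of the capacity $\Leftrightarrow$ Orlicz-Sobolev equivalence — justifies that the two constants are comparable up to universal factors.
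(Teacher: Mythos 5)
Your overall architecture (capacity as the intermediary, a direct semi-group argument for $q \geq 2$, a reduction for $1 \leq q < 2$) is the same as the paper's, and your $(2) \Rightarrow (1)$ route essentially matches it: co-area gives a $Cap_1$ inequality, Maz'ya's passage raises it to $Cap_q$, and Maz'ya's truncation upgrades the capacity bound to the strong Orlicz--Sobolev inequality (only note that in the truncation step it is the hypothesis ``$N(t)^{1/q}/t$ non-decreasing'' that is used, while ``$N(t^\alpha)/t$ non-increasing'' enters earlier, in bounding the Hardy-type integral controlling $B_{N,q}$ in the $Cap_1 \to Cap_q$ step). However, both halves of your $(1) \Rightarrow (2)$ argument have genuine gaps. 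For $q \geq 2$ you test $(1)$ on $f = P_t \chi_A$, bound $\norm{\abs{\nabla P_t \chi_A}}_{L_q(\mu)}$ by the gradient estimate, and claim ``an elementary computation provides a lower bound on $\norm{f - M_\mu f}_{N(\mu)}$ in terms of $\mu(A)$''; but no such $t$-independent lower bound can exist ($P_t\chi_A \to \mu(A)$, so the left side tends to $0$), and more importantly $\mu^+(A)$ never appears in your chain, so optimizing in $t$ cannot produce an isoperimetric inequality. The missing mechanism is Ledoux's dual estimate $\norm{\chi_A - P_t\chi_A}_{L_1(\mu)} \leq \sqrt{2t}\,\mu^+(A)$ (Corollary \ref{cor:Ledoux}), combined with the decomposition $\int\abs{\chi_A - P_t\chi_A}d\mu = 2\bigl(\mu(A)(1-\mu(A)) - \int(P_t\chi_A - \mu(A))(\chi_A - \mu(A))d\mu\bigr)$, where the correlation term is what gets bounded via $(1)$, the gradient estimate, and Maz'ya's duality formula for $\norm{\chi_A - \mu(A)}_{N(\mu)^*}$ (Lemma \ref{lem:Mazya-duality} -- this is also exactly where the Young assumption on $N$ is used).

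For $1 \leq q < 2$, ``interpolating the $L_2$ gradient estimate against $\norm{P_t \chi_A}_\infty \leq 1$'' does not do the job: the trivial sup-bound on the function gives no information on the gradient, and simply dominating $\norm{\abs{\nabla P_t\chi_A}}_{L_q}$ by the $L_2$ bound in the semi-group argument yields only $\tilde I(t) \gtrsim D\, t^{1/2} N^\wedge(t)$, which is strictly weaker than the asserted $t^{1-1/q}N^\wedge(t)$ when $q<2$. The paper's key idea here is different: from the $Cap_q$ inequality one passes to a $Cap_2$ inequality with a \emph{new} profile (Proposition \ref{prop:increase-Orlicz-q} with $q_0=q$), defines the modified Young function $N_2$ by $N_2^\wedge(t) = \bigl(\int_t^\infty s^{-2/p} N^\wedge(s)^{-2}\,ds\bigr)^{-1/2}$, verifies via Lemma \ref{lem:big-lemma} that $N_2$ is Young with $N_2(t)^{1/2}/t$ non-decreasing, converts back to an $(N_2,2)$ Orlicz--Sobolev inequality (Proposition \ref{prop:Capq-Lq}), and only then applies the $q=2$ semi-group theorem; the hypothesis $\alpha > 1/q - 1/2$ is what makes the defining integral comparable to $t^{1-2/q}N^\wedge(t)^{-2}$, i.e.\ $N_2^\wedge(t) \gtrsim C_{\alpha,q}\, t^{1/q-1/2}N^\wedge(t)$, so that $t^{1/2}N_2^\wedge(t)$ recovers $t^{1-1/q}N^\wedge(t)$. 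Relatedly, the constant $C_{\alpha,q}$ arises from this comparison, not from the passage $(1) \Rightarrow$ capacity, which costs only a universal factor (Proposition \ref{prop:Capq-Lq-weak}). Without the change of Orlicz function your reduction loses exactly the power of $t$ that the theorem claims.
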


When $N(t)=t^2, q=2$, the direction $(2) \Rightarrow (1)$ reduces
(up to constants) to Cheeger's inequality, and the direction $(1)
\Rightarrow (2)$ to its reversed form due to Buser--Ledoux. In
addition, using $N(t) = t^q \log(1 + t^q)$ and a result of Bobkov
and Zegarlinski \cite{BobkovZegarlinski} (generalizing a previous
result of Bobkov and G\"{o}tze \cite{BobkovGotzeLogSobolev}), a
variant of Theorem \ref{thm:Intro-Orlicz} implies (see Corollary \ref{cor:q-log-Sob-implies-isop}) the following:

\begin{thm} \label{thm:Intro-q-log-Sob}
Under our convexity assumptions, the $q$-log-Sobolev inequality (\ref{eq:Intro-q-log-Sob}) (for $q \in [1,2]$) is equivalent to the isoperimetric inequality:
\[
\tilde{I}(t) \geq D_{I_q} t \log^{1/q}(1/t) \;\;\; \forall t \in [0,1/2] ~,
\]
with the best constants $D_{LS_q},D_{I_q}$ satisfying $C_1 D_{LS_q} \leq D_{I_q} \leq C_2 D_{LS_q}$ for some universal constants $C_1,C_2>0$, uniformly on $q \in [1,2]$.
\end{thm}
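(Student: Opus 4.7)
The proof assembles two ingredients: the general equivalence furnished by Theorem \ref{thm:Intro-Orlicz} (in the sharper form given by Theorem \ref{thm:Orlicz-summary}), and the functional identification of the $q$-entropy with an Orlicz norm due to Bobkov--Zegarlinski \cite{BobkovZegarlinski}. Concretely, for each $q \in [1,2]$ set
\[
N_q(t) := t^q \log(1+t^q) ~,
\]
which is a Young function. The strategy is to recognize the $q$-log-Sobolev inequality \eqref{eq:Intro-q-log-Sob} as the Orlicz-Sobolev inequality \eqref{eq:OS-inq-def} with $N = N_q$, and then read off the corresponding isoperimetric profile produced by Theorem \ref{thm:Intro-Orlicz}.

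The first step is to invoke the Bobkov--Zegarlinski equivalence: for every $f \in \F$,
\[
\brac{\int |f|^q \log |f|^q \, d\mu - \int |f|^q d\mu \, \log \brac{\int |f|^q d\mu}}^{1/q} \simeq \norm{f - M_\mu f}_{N_q(\mu)} ~,
\]
with constants independent of $q \in [1,2]$ (the right-hand side may equivalently use $E_\mu$ in place of $M_\mu$, by Lemma \ref{lem:E-M} since $N_q$ is convex). This converts Theorem \ref{thm:Intro-q-log-Sob} into a direct application of Theorem \ref{thm:Intro-Orlicz} to $N_q$.

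Next I compute the target profile. Since $N_q^{-1}(s) \simeq s^{1/q}/\log^{1/q}(1+s)$, a quick substitution yields
\[
\frac{t^{1-1/q}}{N_q^{-1}(1/t)} \simeq t \, \log^{1/q}(1/t) \qquad \forall t \in (0,1/2] ~,
\]
which is exactly the right-hand side appearing in Theorem \ref{thm:Intro-q-log-Sob}. Thus, modulo verifying the hypotheses \eqref{eq:Intro-extra}, the two inequalities are equivalent with universally comparable constants, and the equivalence $D_{LS_q} \simeq D_{I_q}$ follows. Note that the direction from the isoperimetric inequality to the $q$-log-Sobolev inequality does not require any convexity assumption, in line with the last sentence of Theorem \ref{thm:Intro-Orlicz}.

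The main technical obstacle I anticipate is checking \eqref{eq:Intro-extra} uniformly in $q$. The monotonicity of $N_q(t)^{1/q}/t = \log^{1/q}(1+t^q)$ is immediate. For the second condition, $N_q(t^\alpha)/t = t^{q\alpha-1}\log(1+t^{q\alpha})$; a short analysis near the origin forces $\alpha \leq 1/(2q)$, while \eqref{eq:Intro-extra} demands $\alpha > (2-q)/(2q)$, and these constraints are compatible only for $q > 1$. To cover the borderline $q = 1$ and to secure uniformity of the constants $B_{\alpha,q}, C_{\alpha,q}$ as $q \downarrow 1$, I would bypass \eqref{eq:Intro-extra} altogether by passing through the capacity-inequality reformulation developed in Section \ref{sec:capacities}, as reflected in the more flexible hypotheses of Theorem \ref{thm:Orlicz-summary}. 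Choosing $\alpha$ as a function of $q$ that remains bounded away from both endpoints (say $\alpha = 1/(2q)$ in the interior range) then yields the absolute constants $C_1, C_2$ claimed in Theorem \ref{thm:Intro-q-log-Sob}.
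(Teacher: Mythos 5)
Your outline coincides with the paper's: identify (\ref{eq:Intro-q-log-Sob}) with the Orlicz--Sobolev inequality for $\varphi_q(t)=t^q\log(1+t^q)$ via Bobkov--Zegarlinski (together with Lemma \ref{lem:E-M}), compute $t^{1-1/q}/\varphi_q^{-1}(1/t)\simeq t\log^{1/q}(1/t)$ on $(0,1/2]$, and feed this into the general equivalence; moreover the easy direction (isoperimetry implies $q$-log-Sobolev) does follow with uniform constants from Corollary \ref{cor:Orlicz-21-strong-easy} with $\alpha=1/(2q)$ and needs no convexity, exactly as you say. You have also correctly located the real difficulty: for the hard direction the hypothesis (\ref{eq:Intro-extra}) forces $\alpha\leq 1/(2q)$ while demanding $\alpha>1/q-1/2$, which is impossible at $q=1$ and degenerates as $q\rightarrow 1$.

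However, your proposed resolution does not close this gap. Theorem \ref{thm:Orlicz-summary} is not ``more flexible'' in the relevant direction: for $(1)\Rightarrow(2)$ with $q<2$ it still requires (\ref{eq:OS-D}), i.e.\ $\alpha>1/q-1/2$, and its constant $C_{\alpha,q}=(\alpha+1/2-1/q)^{1/2}$ equals $((q-1)/(2q))^{1/2}$ for your choice $\alpha=1/(2q)$, so you obtain neither the endpoint $q=1$ nor uniformity near it. Nor does ``passing through the capacity reformulation'' by itself help: the quantitative loss in the chain $q$-capacity $\rightarrow$ $2$-capacity $\rightarrow$ Orlicz--Sobolev $\rightarrow$ isoperimetry (Proposition \ref{prop:increase-Orlicz-q}, Lemma \ref{lem:big-lemma}, Theorem \ref{thm:Orlicz-12}) is precisely the integral expression (\ref{eq:CNq}) of Theorem \ref{thm:Orlicz-12-strong}, and some argument must bound it uniformly in $q\in[1,2]$. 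This is what the paper's proof (Corollary \ref{cor:q-log-Sob-implies-isop}) actually does: it first normalizes $\varphi_q$ near the origin via Corollary \ref{cor:N-at-0}, so that the modified $N_q$ satisfies $N_q^\wedge(t)\simeq t^{1/q}\log^{1/q}(1+1/t)$ on $[0,1/2]$ and $N_q^\wedge(t)\simeq t^{1/q}$ beyond, and then shows
\[
\sup_{0<t<1/2}\int_t^{\infty}\frac{t^{2/p-1}\,N_q^\wedge(t)^2}{s^{2/p}\,N_q^\wedge(s)^2}\,ds \leq C
\]
with $C$ universal, by splitting the integral at $1/2$ and substituting $v=\log(1+1/s)$. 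Without this (or an equivalent) explicit estimate, your argument proves the hard direction only for $q$ bounded away from $1$, with constants blowing up as $q\rightarrow 1$, which is strictly weaker than the uniform statement of Theorem \ref{thm:Intro-q-log-Sob}.
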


\noindent
That the latter implies the former was previously shown by Bobkov and Zegarlinski \cite{BobkovZegarlinski} without any convexity assumptions (we prove a more general result in Section \ref{sec:general}). That the former implies the latter for $q=2$ is precisely the statement that $D_{Gau} \geq c D_{LS_2}$ under our convexity assumptions, recovering the previously mentioned result of Bakry--Ledoux \cite{BakryLedoux} and Ledoux \cite{LedouxSpectralGapAndGeometry}.

\medskip

Theorem \ref{thm:Intro-Orlicz} coupled with the equivalence between
Orlicz-Sobolev inequalities and capacity inequalities, enables us to
directly infer isoperimetric inequalities from their $q$-capacity
counterparts under our convexity assumptions. Previous works of Barthe--Roberto
\cite{BartheRoberto}, Barthe--Cattiaux--Roberto \cite{BCRHard} and
Roberto--Zegarlinski \cite{RobertoZegarlinski} have shown that
$2$-capacity inequalities are often equivalent to certain other
types of functional inequalities, such as the
Lata{\l}a--Oleszkiewicz inequality
\cite{LatalaOleszkiewiczBecknerInq} (or more general Beckner-type
inequalities) and additive $\Phi$-Sobolev inequalities. The
advantage of these inequalities compared to the Orlicz-Sobolev
inequalities lies in the fact that they admit tensorization.
To further demonstrate the usefulness of the framework we develop, we easily deduce in Section \ref{sec:tensorization} as a by-product of our methods the dimension-free tensorization results of \cite{BCRHard,BCRSoft}. In fact, we prove the following natural extension of these results.
By the Central-Limit Theorem, one cannot expect a dimension-free result
for isoperimetric profiles which are better than the one for the
Gaussian measure (and even in this case some badly behaved examples
due to Franck Barthe are known
\cite{BartheIsoperimetryUniformDistance}), so some condition needs
to be imposed (we refer to Section \ref{sec:tensorization} for more
details):

\begin{thm} \label{thm:Intro-tensorization}
Let $J:[0,1] \rightarrow \Real_+$ denote an arbitrary continuous
concave function vanishing at $\set{0,1}$ and symmetric about the
point $1/2$. Assume that $J$ does not violate the Central-Limit
obstruction ``with rate $D$'' (in the sense of Theorem
\ref{thm:tensorization}). Let $(M,g)$ denote a Riemannian manifold
equipped with an absolutely continuous Borel probability measure
$\mu$, and assume that:
\[
 I_{(M,g,\mu)}(t) \geq J(t) \;\;\; \forall t \in [0,1] ~.
\]
Then \textbf{without any additional convexity assumptions}, there
exists a constant $c_D>0$ depending only on $D$, such that for any
$k\geq 1$:
\[
I_{(M^{\times k},g^{\otimes k},\mu^{\otimes k})}(t) \geq c_D J(t)
\;\;\; \forall t \in [0,1] ~.
\]
\end{thm}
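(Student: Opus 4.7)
The plan is to pass through the capacity framework developed in Section \ref{sec:capacities}, exploiting the fact that $2$-capacity inequalities admit dimension-free tensorization whereas isoperimetric inequalities in general do not. The first move is to translate the hypothesis $I_{(M,g,\mu)}\ge J$ into an equivalent $2$-capacity inequality on $(M,g,\mu)$ of the form
\[
\mathrm{Cap}_2(A,B) \ge G(\mu(A)) \qquad \forall\, A\subset B,\ \mu(A)\le 1/2\le \mu(B),
\]
where $G$ is an explicit functional of $J$ (essentially $G(t)^{-1}\simeq \int_t^{1/2} s\, J(s)^{-2}\, ds$). This translation rests on the co-area formula and a Maz'ya-style truncation argument, and crucially does not require any convexity of $\mu$; by the equivalence results of Section \ref{sec:capacities}, the $2$-capacity inequality is tight up to universal constants with respect to the original isoperimetric inequality.

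The second move is to tensorize the $2$-capacity inequality. This is where the capacity formulation really pays off: following the argument of Barthe--Cattiaux--Roberto \cite{BCRHard,BCRSoft}, $2$-capacity inequalities tensorize dimension-freely by sub-additivity of variance on product spaces. The \emph{Central-Limit obstruction with rate $D$} assumption on $J$ translates into a compatibility condition on $G$ which is preserved under tensorization, so that one obtains a $2$-capacity inequality on $(M^{\times k}, g^{\otimes k}, \mu^{\otimes k})$ with the same profile $G$ up to a constant $c_D$ depending only on $D$, independent of the number of factors $k$.

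Finally, one inverts the equivalence from the first step on the product space to conclude
\[
I_{(M^{\times k}, g^{\otimes k}, \mu^{\otimes k})}(t) \ge c'_D\, J(t) \qquad \forall\, t\in [0,1].
\]
The main obstacle is this last inversion: in general, passing from a functional inequality back to an isoperimetric inequality is precisely where our convexity assumptions enter, as in the direction $(1)\Rightarrow (2)$ of Theorem \ref{thm:Intro-Orlicz}. However, the Maz'ya-type correspondence between $2$-capacity and isoperimetric inequalities --- unlike its Orlicz-Sobolev analog --- is genuinely convexity-free, provided the capacity profile $G$ corresponds to an isoperimetric profile $J$ which respects the Central-Limit obstruction. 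Since this is exactly our standing assumption on $J$, the inversion succeeds and yields the desired dimension-free isoperimetric inequality on the product space, with the same constant $c_D$ (up to a harmless universal factor) governing all $k\ge 1$.
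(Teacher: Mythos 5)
There is a genuine gap, and it sits exactly at the step you call the ``main obstacle'' and then wave away. Your final inversion claims that the passage from a $2$-capacity inequality back to an isoperimetric inequality is ``genuinely convexity-free'' provided the profile respects the Central-Limit obstruction. This is false: the convexity-free Maz'ya/Federer--Fleming/Bobkov--Houdr\'e correspondence (Proposition \ref{prop:Sodin-prop}, Corollary \ref{cor:cap1}) holds only for $Cap_1$; going from $Cap_2$ (or any $Cap_q$ with $q>1$) down to isoperimetry is precisely the direction that requires the convexity assumptions, via the semi-group argument behind Theorem \ref{thm:Orlicz-12} and the direction $(1)\Rightarrow(2)$ of Theorem \ref{thm:cap-iso}. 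The measures $\mu_\alpha$ from the Introduction show that no condition on the target profile alone can rescue this implication: they satisfy Poincar\'e and $2$-log-Sobolev inequalities (hence nontrivial $2$-capacity inequalities) while $\mu_\alpha^+([0,1])=0$, so the isoperimetric profile is identically zero. The ``rate $D$'' hypothesis (\ref{eq:obstruction}) is a regularity condition on $J$ relative to the Gaussian profile, used in the paper only to verify the hypotheses of Theorem \ref{thm:cap2} (that $T(x)^2/x$ is essentially non-increasing), and it cannot substitute for convexity in the inversion. A secondary inaccuracy: $2$-capacity inequalities do not tensorize directly ``by sub-additivity of variance''; what tensorizes is the equivalent Beckner-type inequality (\ref{eq:Beckner}), and the equivalence with capacity (Theorem \ref{thm:cap2}) is where the constant depending on $D$ enters.

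The paper's route supplies exactly the missing ingredient, and it is not optional. By Bobkov's characterization \cite{BobkovExtremalHalfSpaces}, any admissible $J$ is the isoperimetric profile of an even log-concave measure $\nu_0$ on $\Real$; one first proves the theorem for $(\Real,\abs{\cdot},\nu_0)$. There the chain isoperimetry $\Rightarrow Cap_1 \Rightarrow Cap_2 \Rightarrow$ Beckner tensorizes, and on the product $(\Real^k,\abs{\cdot}^{\otimes k},\nu_0^{\otimes k})$ the measure is log-concave, so the convexity assumptions \emph{are} satisfied and Theorem \ref{thm:cap-iso} $(1)\Rightarrow(2)$ legitimately converts the product $Cap_2$ inequality back into an isoperimetric one (with constants controlled via Lemma \ref{lem:last-thing}). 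The statement ``without any additional convexity assumptions'' for general $(M,g,\mu)$ is then obtained not by avoiding convexity in the inversion, but by transferring the one-dimensional product result to $(M^{\times k},g^{\otimes k},\mu^{\otimes k})$ via Barthe's comparison Theorem \ref{thm:Barthe}. Your proposal, which works directly on $(M,g,\mu)$ and its powers, has no source of convexity at the final step, and as written that step fails.
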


Here $g^{\otimes k}$ denotes the natural Riemannian product metric
on the product space $M^{\times k}$, and $\mu^{\otimes k}$ denotes
the product measure. This result was previously shown by
Barthe--Cattiaux--Roberto for $J =
I_{(\Real,\abs{\cdot},\mu_\alpha)}$ where $d\mu_\alpha = Z_\alpha
\exp(-|x|^\alpha) dx$ and $\alpha \in [1,2]$ in \cite{BCRHard}, and
for $J = I_{(\Real,\abs{\cdot},\mu_\Phi)}$ where $d\mu_\Phi =
\exp(-\Phi(|x|)) dx$, $\Phi$ is convex and $\sqrt{\Phi}$ is concave,
in \cite{BCRSoft}. It can be shown that $D \leq C$ in the former
case and that $D \leq C_\Phi$ in the latter, so our result recovers
the previous ones. As noted to us by Franck Barthe, it is also possible to approach the proof of Theorem \ref{thm:Intro-tensorization} by pushing further the methods used in \cite{BCRSoft} (perhaps requiring some additional technical assumptions, and with the constant $c_D$ depending on $J$).
However, this approach relies on another class of functional inequalities (``super-Poincar\'e'' inequalities) and a result due to Wang \cite{WangSuperPoincareAndIsoperimetry}, relating between these inequalities and isoperimetric ones. The approach we prefer to undertake, on the other hand, is self-contained, coherent with our framework and requires no further technical assumptions.
To avoid requiring any convexity assumptions in this theorem, we employ a remarkable result of Barthe
\cite{BartheTensorizationGAFA} (see also \cite{RosIsoperimetricProblemNotes}) and a characterization due to
Bobkov \cite{BobkovExtremalHalfSpaces}, which together reduce the
proof to the case $(\Real,\abs{\cdot},\nu)$ with $\nu$ a log-concave density.

\medskip

As already mentioned, our convexity assumptions throughout this work
are used via the semi-group argument described in Section
\ref{sec:semi-group}. More precisely, in that section we assume that
our \emph{smooth} convexity assumptions are fulfilled. To justify
the passage to the limit and conclude that our results are valid
under arbitrary convexity assumptions, we develop a careful
approximation argument in Section \ref{sec:approx}. We emphasize
that this is not just a technical matter, in general it is simply
not true that $(N,q)$ Orlicz-Sobolev inequalities on the spaces
$(\Omega,d,\mu_m)$ are stable under taking limit of $\mu_m$ in the
total-variation norm (see Section \ref{sec:approx}), so the
convexity assumptions will need to be exploited one last time. To
the best of our knowledge, with the exclusion of the tensorization
results above, all the previously known results which were mentioned
did not address this point, and these results were deduced under the
additional smoothness assumptions.

\bigskip

\noindent \textbf{Acknowledgements.} I would like to thank Professor
Jean Bourgain and the Institute for Advanced Study for providing the
perfect research environment. Most especially, I would like to thank
Sasha Sodin for his invaluable help - acquainting me with
capacities, suggesting to look at Ledoux's semi-group argument,
countless other references, many informative conversations and
comments on this manuscript. I am also thankful to Professors Franck
Barthe and Michel Ledoux for their remarks on earlier versions of
this manuscript.


\section{The Semi-Group Argument} \label{sec:semi-group}

In this section, we prove the direction $(1) \Rightarrow (2)$ of Theorem \ref{thm:Intro-Orlicz} for $q \geq 2$.
Our proof is an adaptation of the semi-group argument used in our earlier work \cite{EMilmanRoleOfConvexityArxiv}, which in turn closely follows Ledoux's proof of \cite[Theorem 5.2]{LedouxSpectralGapAndGeometry}.

\subsection{Definitions and Duality} \label{subsect:definitions}

A function $N: \Real_+ \rightarrow \Real_+$ will
be called a Young function if $N(0)=0$ and $N$ is convex increasing. Given a Young function $N$,
the Orlicz norm $N(\mu)$ associated to $N$ is defined as:
\[
 \norm{f}_{N(\mu)} := \inf \set{ v>0 ; \int_\Omega N(|f|/v) d\mu \leq 1}.
\]
For a general increasing continuous function $N : \Real_+
\rightarrow \Real_+$ with $N(0)=0$ and $\lim_{t \rightarrow \infty}
N(t) = \infty$ (we will denote this class by $\J$), the above definition still makes sense, although
$N(\mu)$ will no longer necessarily be a norm. We will say in this
case that it is a quasi-norm.
The following elementary lemma was shown in \cite{EMilmanRoleOfConvexityArxiv}:
\begin{lem} \label{lem:E-M}
Let $N(\mu)$ denote an Orlicz norm associated to the Young function $N$. Then:
\[
 \frac{1}{2} \norm{f - E_\mu f}_{N(\mu)} \leq \norm{f - M_\mu f}_{N(\mu)} \leq 3 \norm{f - E_\mu
f}_{N(\mu)}.
\]
\end{lem}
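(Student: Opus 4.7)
The plan is to reduce both inequalities to triangle inequality, where the only nontrivial ingredient is to bound $\|E_\mu f - M_\mu f\|_{N(\mu)}$, which is the Orlicz norm of a constant. For a probability measure, since $\int N(|c|/v)\,d\mu = N(|c|/v)$, one checks directly that $\|c\|_{N(\mu)} = |c|/N^{-1}(1)$, where $N^{-1}(1)$ is the smallest $t>0$ with $N(t)=1$ (well-defined since $N\in\J$ is continuous, nondecreasing, and tends to $\infty$).

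For the left inequality, I would apply Jensen's inequality to the convex function $N$. Setting $v = \|f-M_\mu f\|_{N(\mu)}$ (or $v(1+\varepsilon)$ if the infimum is not attained), one has
\[
N\Bigl(\frac{E_\mu|f - M_\mu f|}{v}\Bigr) \leq E_\mu N\Bigl(\frac{|f - M_\mu f|}{v}\Bigr) \leq 1,
\]
so $|E_\mu f - M_\mu f| \leq E_\mu |f - M_\mu f| \leq v\, N^{-1}(1)$, i.e.\ $\|E_\mu f - M_\mu f\|_{N(\mu)} \leq \|f - M_\mu f\|_{N(\mu)}$. Triangle inequality then gives $\|f - E_\mu f\|_{N(\mu)} \leq 2 \|f - M_\mu f\|_{N(\mu)}$.

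For the right inequality, the key is a Chebyshev-type bound in Orlicz norm. With $v = \|f - E_\mu f\|_{N(\mu)}$, monotonicity of $N$ and the definition of the Orlicz norm yield, for every $t>0$, $\mu(|f - E_\mu f| \geq v t) \leq 1/N(t)$. By the defining property of the median, $\mu(|f - E_\mu f| \geq |M_\mu f - E_\mu f|) \geq 1/2$, so taking $t = |M_\mu f - E_\mu f|/v$ gives $N(t) \leq 2$, hence $|M_\mu f - E_\mu f| \leq v\, N^{-1}(2)$. Convexity of $N$ together with $N(0)=0$ implies $N(2s) \geq 2 N(s)$, whence $N^{-1}(2) \leq 2 N^{-1}(1)$. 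Combining these, $\|E_\mu f - M_\mu f\|_{N(\mu)} \leq 2 \|f - E_\mu f\|_{N(\mu)}$, and the triangle inequality finishes the proof with constant $3$.

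No step is really an obstacle; the only points requiring care are the (standard) handling of the infimum in the Orlicz norm and the use of convexity to compare $N^{-1}(2)$ with $N^{-1}(1)$.
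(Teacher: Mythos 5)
Your proof is correct, and the constants $2$ and $3$ come out exactly as in the statement. Note that the paper itself does not reprove this lemma but cites \cite{EMilmanRoleOfConvexityArxiv}; your argument is the natural one matching that source: the norm of a constant equals $|c|/N^{-1}(1)$, Jensen gives $\norm{E_\mu f - M_\mu f}_{N(\mu)} \leq \norm{f - M_\mu f}_{N(\mu)}$, while the Markov--Chebyshev bound at the median level together with $\mu\brac{|f-E_\mu f| \geq |M_\mu f - E_\mu f|} \geq 1/2$ and the convexity estimate $N^{-1}(2) \leq 2 N^{-1}(1)$ gives $\norm{E_\mu f - M_\mu f}_{N(\mu)} \leq 2 \norm{f - E_\mu f}_{N(\mu)}$, after which the triangle inequality yields both sides. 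The only delicate points (that the defining integral inequality holds at $v$ equal to the infimum, via monotone convergence or your $\varepsilon$-perturbation, and that $N^{-1}(1), N^{-1}(2)$ are well defined for a nontrivial Young function) are handled adequately.
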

\noindent This lemma implies that we can pass back and forth between using
the median $M_\mu$ and the expectation $E_\mu$ when excluding
constant functions in our functional inequalities, at the expense of
losing a universal constant.

\begin{dfn*}
We denote by $N^*:\Real_+ \rightarrow \Real_+ \cup \set{+\infty}$ the Legendre-Fenchel transform of $N \in \J$:
\[
 N^*(s) = \sup_{t>0} \{ st - N(t)\} ~.
\]
\end{dfn*}
\noindent $N^*$ is always convex, but unfortunately it may attain the value of $+\infty$, so it will not be
a Young function according to our definition. To avoid this minor issue, it will be more convenient
to work with the dual norm to $N(\mu)$:

\begin{dfn*}
We denote by $N(\mu)^*$ the dual norm to $N(\mu)$, given by:
\[
 \norm{f}_{N(\mu)^*} := \sup \set{ \int f g d\mu ; \norm{g}_{N(\mu)} \leq 1}.
\]
\end{dfn*}
\noindent
Although this will not be used, we comment that it is a nice
exercise (e.g. \cite{RaoRenBook}) to show that when $N$ is a Young
function then:
\[
 \norm{f}_{N^*(\mu)} \leq \norm{f}_{N(\mu)^*} \leq 2 \norm{f}_{N^*(\mu)} ~.
\]
The second inequality is usually called Young's inequality.

\medskip

We borrow the next lemma from \cite[p. 111]{MazyaBook}.
For reasons which will become more apparent in Section \ref{sec:capacities}, we denote by
$N^\wedge : \Real_+ \rightarrow \Real_+$ the function given by $N^\wedge(t) = 1/N^{-1}(1/t)$.

\begin{lem}[Maz'ya] \label{lem:Mazya-duality}
Let $N$ denote a Young function. Then for any Borel set $A$ with $\mu(A)>0$:
\[
\norm{\chi_A}_{N(\mu)^*} = \mu(A) N^{-1}\brac{\frac{1}{\mu(A)}} = \frac{\mu(A)}{N^\wedge(\mu(A))} ~.
\]
\end{lem}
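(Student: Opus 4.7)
The plan is to prove the equality by establishing the two matching inequalities.

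\textbf{Upper bound via Jensen.} First I would unfold the definition:
\[
 \norm{\chi_A}_{N(\mu)^*} = \sup\set{\int_A g\, d\mu : \norm{g}_{N(\mu)} \leq 1}.
\]
For any $g$ with $\norm{g}_{N(\mu)} \leq 1$, the definition of the Orlicz norm together with monotone convergence (as $v \to 1^+$) gives $\int_\Omega N(|g|)\, d\mu \leq 1$. Applying Jensen's inequality to the convex function $N$ and the probability measure $\mu\vert_A/\mu(A)$ yields
\[
 N\!\brac{\frac{1}{\mu(A)} \int_A |g|\, d\mu} \leq \frac{1}{\mu(A)} \int_A N(|g|)\, d\mu \leq \frac{1}{\mu(A)} ~.
\]
Since $N$ is increasing, I may apply $N^{-1}$ to both sides to obtain $\int_A |g|\, d\mu \leq \mu(A) N^{-1}(1/\mu(A))$, which bounds $\int_A g\, d\mu$ as required.

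\textbf{Matching lower bound.} I would plug in the extremizer $g_0 := N^{-1}(1/\mu(A))\, \chi_A$ to the supremum. Then
\[
 \int_\Omega N(|g_0|)\, d\mu = \mu(A) \cdot N\brac{N^{-1}(1/\mu(A))} = 1,
\]
so $\norm{g_0}_{N(\mu)} \leq 1$, and
\[
 \int \chi_A\, g_0\, d\mu = \mu(A) N^{-1}(1/\mu(A)) ~.
\]
Combined with the previous step this gives equality, and the second form $\mu(A)/N^\wedge(\mu(A))$ is just the definition of $N^\wedge$.

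\textbf{Expected obstacle.} There is no substantive obstacle; the only subtlety is the (standard) point that $\norm{g}_{N(\mu)} \leq 1$ actually forces $\int N(|g|)\, d\mu \leq 1$ for Young functions, which I would dispatch by applying monotone convergence to the infimum defining the Luxemburg norm. Everything else is Jensen plus exhibiting a one-line extremizer.
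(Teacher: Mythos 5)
Your proposal is correct and follows essentially the same route as the paper: the extremizer $g_0 = N^{-1}(1/\mu(A))\chi_A$ for the lower bound and Jensen's inequality applied to $N$ with the normalized restriction $\mu|_A/\mu(A)$ for the upper bound. The only difference is that you make explicit the standard point that $\norm{g}_{N(\mu)} \leq 1$ implies $\int N(|g|)\,d\mu \leq 1$, which the paper uses implicitly.
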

\begin{proof}
On one hand, denoting $g_0 := N^{-1}(1/\mu(A)) \chi_A$, since $\norm{g_0}_{N(\mu)} = 1$ we have:
\[
 \norm{\chi_A}_{N(\mu)^*} \geq \int \chi_A g_0 d\mu = \mu(A) N^{-1}\brac{\frac{1}{\mu(A)}} ~.
\]
On the other hand, by Jensen's inequality, for any $g$ with $\norm{g}_{N(\mu)} \leq 1$, we have:
\begin{eqnarray*}
 \int \chi_A g d\mu &\leq& \mu(A) N^{-1}\brac{\frac{1}{\mu(A)} \int_A N(|g|) d\mu } \\
&\leq& \mu(A) N^{-1}\brac{\frac{1}{\mu(A)} \int_\Omega N(|g|) d\mu } \leq  \mu(A)
N^{-1}\brac{\frac{1}{\mu(A)}} ~.
\end{eqnarray*}
\end{proof}

\subsection{Semi-Group Gradient Estimates}

Given a smooth complete connected Riemannian manifold $\Omega = (M,g)$ equipped with a
probability measure $\mu$ with density $d\mu = \exp(-\psi) dvol_M$, $\psi \in C^2(M,\Real)$, we
define the associated Laplacian $\Delta_{(\Omega,\mu)}$ by:
\begin{equation} \label{eq:Laplacian-def}
 \Delta_{(\Omega,\mu)} := \Delta_{\Omega} - \nabla \psi \cdot \nabla,
\end{equation}
where $\Delta_{\Omega}$ is the usual Laplace-Beltrami operator on $\Omega$. $\Delta_{(\Omega,\mu)}$
acts on $\B(\Omega)$, the space of bounded smooth real-valued functions on $\Omega$.
Let $(P_t)_{t \geq 0}$ denote the semi-group associated to the diffusion process
with infinitesimal generator $\Delta_{(\Omega,\mu)}$
(cf. \cite{DaviesSemiGroupBook,LedouxLectureNotesOnDiffusion}),
characterized by the following system of second order differential
equations:
\[
 \frac{d}{dt} P_t(f) = \Delta_{(\Omega,\mu)} (P_t(f)) \;\;\;\; P_0(f) = f \;\;\; \forall f \in
\B(\Omega)~.
\]
For each $t \geq 0$, $P_t : \B(\Omega) \rightarrow \B(\Omega)$ is a bounded linear operator and
its action naturally extends to the entire $L_p(\mu)$ spaces ($p\geq 1$). We collect several
elementary properties of these operators:
\begin{itemize}
\item
$P_t 1 = 1$.
 \item
$f \geq 0 \Rightarrow P_t f \geq 0$.
\item
$\int P_t f d\mu = \int f d\mu$.
\item
$\abs{P_t(f)}^p \leq P_t(\abs{f}^p)$ for all $p \geq 1$.
\end{itemize}

The following crucial dimension-free reverse Poincar\'e inequality was shown by Bakry and Ledoux in
\cite[Lemma 4.2]{BakryLedoux}, extending Ledoux's approach \cite{LedouxBusersTheorem} for proving
Buser's Theorem (see also \cite[Lemma 2.4]{BakryLedoux}, \cite[Lemma 5.1]{LedouxSpectralGapAndGeometry}):

\begin{lem}[Bakry--Ledoux] \label{lem:Ledoux}
Assume that the following Bakry-\'{E}mery Curvature-Dimension condition holds on $\Omega$:
\begin{equation} \label{eq:Ledoux-condition}
Ric_g + Hess_g \psi \geq -K g ~, K \geq 0 ~.
\end{equation}
Then for any $t\geq 0$ and $f \in \B(\Omega)$, we have:
\[
 c(t) \abs{\nabla P_t f}^2 \leq P_t(f^2) - (P_t f)^2
\]
pointwise, where:
\[
 c(t) = \frac{1 - \exp(-2Kt)}{K} \;\;\; (= 2t \;\; \text{ if } K=0).
\]
\end{lem}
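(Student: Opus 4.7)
The plan is to use the Bakry--\'Emery $\Gamma_2$ calculus to derive a pointwise gradient estimate for the semigroup $P_t$, and then interpolate between $(P_t f)^2$ and $P_t(f^2)$ along an appropriate path. For $f \in \B(\Omega)$, define the carr\'e du champ $\Gamma(f,g) := \nabla f \cdot \nabla g$ and the iterated carr\'e du champ
\[
\Gamma_2(f,f) := \tfrac{1}{2} \Delta_{(\Omega,\mu)} \Gamma(f,f) - \Gamma(f, \Delta_{(\Omega,\mu)} f) ~.
\]
A direct computation using the Bochner formula (applied to $\Delta_\Omega$) together with the definition (\ref{eq:Laplacian-def}) yields the weighted Bochner identity
\[
\Gamma_2(f,f) = \abs{\mathrm{Hess}_g f}^2 + (Ric_g + Hess_g \psi)(\nabla f, \nabla f) ~,
\]
which, under the hypothesis (\ref{eq:Ledoux-condition}), produces the pointwise inequality $\Gamma_2(f,f) \geq -K \Gamma(f,f) = -K \abs{\nabla f}^2$.

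Next, I would upgrade this infinitesimal bound into a semigroup gradient estimate. Fix $u > 0$ and $g \in \B(\Omega)$, and consider $\Psi(s) := e^{2Ks} P_s\brac{\abs{\nabla P_{u-s} g}^2}$ for $s \in [0,u]$. Using $\frac{d}{ds} \abs{\nabla P_{u-s} g}^2 = -2 \Gamma(P_{u-s} g, \Delta_{(\Omega,\mu)} P_{u-s} g)$ together with $\frac{d}{ds}P_s = \Delta_{(\Omega,\mu)} P_s$, one computes
\[
\frac{d}{ds} P_s(\abs{\nabla P_{u-s} g}^2) = 2 P_s\brac{\Gamma_2(P_{u-s} g, P_{u-s} g)} \geq -2K \, P_s(\abs{\nabla P_{u-s}g}^2) ~,
\]
so $\Psi'(s) \geq 0$. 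Comparing the values of $\Psi$ at $s=0$ and $s=u$ gives the pointwise estimate
\[
\abs{\nabla P_u g}^2 \leq e^{2Ku} P_u\brac{\abs{\nabla g}^2} ~.
\]

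Finally, I would run the standard interpolation. For fixed $t > 0$ and $f \in \B(\Omega)$, let $\phi(s) := P_s\brac{(P_{t-s} f)^2}$ for $s \in [0,t]$, so that $\phi(0) = (P_t f)^2$ and $\phi(t) = P_t(f^2)$. A direct computation, using that $\Delta_{(\Omega,\mu)}$ is a diffusion generator with carr\'e du champ $\Gamma(\cdot,\cdot) = \nabla \cdot \nabla$, yields
\[
\phi'(s) = P_s\brac{\Delta_{(\Omega,\mu)} (P_{t-s} f)^2 - 2 (P_{t-s} f) \Delta_{(\Omega,\mu)} P_{t-s} f} = 2 P_s\brac{\abs{\nabla P_{t-s} f}^2} ~.
\]
Applying the gradient estimate above with $g = P_{t-s} f$ and $u = s$ gives $P_s(\abs{\nabla P_{t-s} f}^2) \geq e^{-2Ks} \abs{\nabla P_t f}^2$, and integrating over $[0,t]$ produces
\[
P_t(f^2) - (P_t f)^2 = \int_0^t \phi'(s)\, ds \geq 2 \abs{\nabla P_t f}^2 \int_0^t e^{-2Ks} ds = \frac{1-e^{-2Kt}}{K} \abs{\nabla P_t f}^2 ~,
\]
with the convention that the right-hand side equals $2t \abs{\nabla P_t f}^2$ when $K=0$ (obtained either by taking $K \to 0$ or by running the same computation with $\Psi(s) = P_s(\abs{\nabla P_{u-s}g}^2)$ directly).

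The main obstacle is technical rather than conceptual: one must verify that the formal pointwise identities above are legitimate for $f \in \B(\Omega)$ on a complete non-compact manifold, where $P_t$ need not map $\B(\Omega)$ to itself and the required integration-by-parts or maximum-principle arguments can fail without suitable growth control on $\psi$. A standard remedy is to first establish the estimate on compactly supported smooth data under a cutoff/localization of $\psi$, then pass to the limit using the semigroup's contraction properties; alternatively one may work with the abstract $\Gamma_2$ framework as in \cite{BakryEmery,LedouxSpectralGapAndGeometry}, where these analytic issues are subsumed into the diffusion assumption.
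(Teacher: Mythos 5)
Your argument is correct: the $\Gamma_2$ criterion from the weighted Bochner formula, the commutation estimate $\abs{\nabla P_u g}^2 \leq e^{2Ku} P_u(\abs{\nabla g}^2)$ obtained from the monotonicity of $s \mapsto e^{2Ks}P_s(\abs{\nabla P_{u-s}g}^2)$, and the interpolation of $s \mapsto P_s((P_{t-s}f)^2)$ together give exactly the stated reverse Poincar\'e inequality with $c(t) = (1-e^{-2Kt})/K$. The paper itself offers no proof---it imports the lemma from Bakry--Ledoux \cite[Lemma 4.2]{BakryLedoux} (see also \cite{LedouxSpectralGapAndGeometry})---and your proof is essentially the standard semigroup argument given in those references, including the acknowledged technical caveats about justifying the formal computations on a non-compact manifold.
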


In fact, the proof of this lemma is very general and extends to the abstract
framework of diffusion generators, as developed by Bakry and
\'{E}mery in their celebrated paper \cite{BakryEmery}. In the
Riemannian setting, it is known \cite{QianGradientEstimateWithBoundary} (see also \cite{HsuGradientEstimateWithBoundary,WangGradientEstimateWithBoundary}) that the gradient estimate
of Lemma \ref{lem:Ledoux} is preserved  when restricting to a locally convex set (as defined in
Section \ref{sec:approx}); we refer to Sturm \cite[Proposition 4.15]{SturmCD1} for
a general statement about closedness of the
Barky-\'{E}mery Curvature-Dimension condition
in an arbitrary metric probability space.
The above lemma therefore holds under more general conditions, namely when $\mu$ is supported on a
locally convex domain (connected open set) $\Omega \subset (M,g)$ with $C^2$ boundary, and
$d\mu|_\Omega = \exp(-\psi) dvol_M|_\Omega$, $\psi \in C^2(\overline{\Omega},\Real)$. In this case,
$\Delta_{\Omega}$ in (\ref{eq:Laplacian-def}) denotes the Neumann Laplacian on $\overline{\Omega}$,
$\B(\Omega)$ denotes the space of bounded smooth real-valued functions on $\overline{\Omega}$ satisfying
Neumann's boundary conditions on $\partial \Omega$, and Lemma \ref{lem:Ledoux} remains valid.

Our convexity assumptions are that $K=0$ in Lemma \ref{lem:Ledoux}, and this is what we will
henceforth assume. It is clear that our results in this section as well as Section
\ref{sec:general} may be extended to the case of $K > 0$, but we do not pursue this
direction in this work.

From Lemma \ref{lem:Ledoux}, it is immediate that for any $2 \leq q \leq \infty$:
\begin{equation} \label{eq:L_q-bound}
 \norm{\abs{\nabla P_t f} }_{L_q(\mu)} \leq \frac{1}{\sqrt{2t}} \norm{f}_{L_q(\mu)},
\end{equation}
and using $q = \infty$, Ledoux easily deduces the following dual statement
\cite[(5.5)]{LedouxSpectralGapAndGeometry}:
\begin{cor}[Ledoux] \label{cor:Ledoux}
\begin{equation}
 \norm{ f - P_t f}_{L_1(\mu)} \leq \sqrt{2 t} \norm{ \abs{\nabla f} }_{L_1(\mu)}.
\end{equation}
\end{cor}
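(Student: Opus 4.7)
The plan is a duality argument using the $L_\infty$ endpoint of (\ref{eq:L_q-bound}). By $L_1$--$L_\infty$ duality,
\[
 \norm{f - P_t f}_{L_1(\mu)} = \sup\set{\int (f - P_t f) g \, d\mu : g \in \B(\Omega),\ \norm{g}_{L_\infty(\mu)} \leq 1},
\]
so it suffices to bound the bracketed integral for each admissible $g$ by $\sqrt{2t} \norm{\abs{\nabla f}}_{L_1(\mu)}$. I would first exploit the fact that $P_s$ is symmetric on $L_2(\mu)$ (which is built into the definition (\ref{eq:Laplacian-def}) of $\Delta_{(\Omega,\mu)}$ with respect to the weight $\exp(-\psi)$) to rewrite this as $\int f (g - P_t g) d\mu$.

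Next, I would apply the fundamental theorem of calculus in $s$ to the smooth family $s \mapsto P_s g$, giving $g - P_t g = -\int_0^t \Delta_{(\Omega,\mu)} P_s g \, ds$, and integrate by parts in the spatial variables:
\[
 \int f(g - P_t g) \, d\mu \;=\; \int_0^t \int \nabla f \cdot \nabla P_s g \, d\mu \, ds.
\]
This is where the $CD(0,\infty)$ setting is convenient: on a complete manifold (or on a locally convex domain with Neumann boundary conditions, as discussed after Lemma \ref{lem:Ledoux}) the boundary terms vanish. Applying Cauchy--Schwarz pointwise and H\"older in the form $L_1(\mu) \cdot L_\infty(\mu)$ gives
\[
 \abs{\int \nabla f \cdot \nabla P_s g \, d\mu} \leq \norm{\abs{\nabla f}}_{L_1(\mu)} \, \norm{\abs{\nabla P_s g}}_{L_\infty(\mu)}.
\]

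The final step is to invoke the $q = \infty$ case of (\ref{eq:L_q-bound}), which follows instantly from Lemma \ref{lem:Ledoux} at $K=0$ by the pointwise bound $P_s(g^2) - (P_s g)^2 \leq \norm{g}_{L_\infty(\mu)}^2$, yielding $\norm{\abs{\nabla P_s g}}_{L_\infty(\mu)} \leq (2s)^{-1/2} \norm{g}_{L_\infty(\mu)} \leq (2s)^{-1/2}$. Combining and integrating,
\[
 \int (f - P_t f) g \, d\mu \;\leq\; \norm{\abs{\nabla f}}_{L_1(\mu)} \int_0^t \frac{ds}{\sqrt{2s}} \;=\; \sqrt{2t}\, \norm{\abs{\nabla f}}_{L_1(\mu)},
\]
and taking the supremum over $g$ concludes the proof. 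The only mildly subtle points are justifying the integration by parts and the interchange of $\frac{d}{ds}$ with the integrals; these are routine for $f \in \B(\Omega)$ and $g$ first taken smooth and bounded (with a density argument in $g$ afterwards), using the smoothing property of $P_s$ for $s>0$ and the integrability of $s^{-1/2}$ near $s=0$. I do not anticipate a genuine obstacle; the essential input is the pointwise Bakry--Ledoux estimate combined with the self-adjointness of the semi-group.
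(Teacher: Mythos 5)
Your argument is correct and is essentially the proof the paper has in mind: the paper simply invokes Ledoux's dualization of the $q=\infty$ case of (\ref{eq:L_q-bound}), which is exactly your computation (duality with $\norm{g}_{L_\infty}\leq 1$, symmetry of $P_s$, the identity $g-P_tg=-\int_0^t \Delta_{(\Omega,\mu)}P_s g\,ds$ with integration by parts, and $\norm{\abs{\nabla P_s g}}_{L_\infty}\leq (2s)^{-1/2}$, followed by $\int_0^t (2s)^{-1/2}ds=\sqrt{2t}$). No substantive difference from the cited argument of Ledoux \cite{LedouxSpectralGapAndGeometry}.
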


\subsection{Orlicz-Sobolev implies Isoperimetry for $q \geq 2$}

\begin{thm} \label{thm:Orlicz-12}
Let $2 \leq q \leq \infty$ and let $N$ denote a Young function. Then under our convexity
assumptions, the statement:
\begin{equation} \label{eq:Orlicz-inq}
 \forall f \in \F \;\; D \norm{f - M_\mu f}_{N(\mu)} \leq \norm{ \abs{\nabla f} }_{L_q(\mu)}
\end{equation}
implies:
\begin{equation} \label{eq:Orlicz-12-conclusion}
 \tilde{I}(t) \geq C_{N,q} D t^{1-1/q} N^\wedge(t) \;\; \forall t \in [0,1/2] ~,
\end{equation}
with $C_{N,q} \geq c > 0$, a universal constant.
\end{thm}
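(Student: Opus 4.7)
The plan is to apply the Orlicz-Sobolev hypothesis (\ref{eq:Orlicz-inq}) to the semi-group evolute $f_t := P_t \chi_A$ of the indicator of an arbitrary Borel set $A$ with $a := \mu(A) \in (0,1/2]$, and to combine the two Bakry--Ledoux estimates (\ref{eq:L_q-bound}) and Corollary \ref{cor:Ledoux} on the two sides of (\ref{eq:Orlicz-inq}). On the right-hand side, the gradient estimate (\ref{eq:L_q-bound}) (available precisely because $q \geq 2$) gives
\[
\norm{\abs{\nabla f_t}}_{L_q(\mu)} \;\leq\; \frac{\norm{\chi_A}_{L_q(\mu)}}{\sqrt{2t}} \;=\; \frac{a^{1/q}}{\sqrt{2t}} ~,
\]
so the hypothesis produces the upper bound $\norm{f_t - M_\mu f_t}_{N(\mu)} \leq a^{1/q}/(D\sqrt{2t})$.

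To extract a matching lower bound on the same $N(\mu)$-norm, I would test against the mean-zero function $g := \chi_A - a$. Since $\int g\, d\mu = 0$, the median constant drops out, and duality gives
\[
\int f_t\, g \, d\mu \;=\; \int (f_t - M_\mu f_t) g \, d\mu \;\leq\; \norm{f_t - M_\mu f_t}_{N(\mu)} \norm{g}_{N(\mu)^*} ~.
\]
Maz'ya's Lemma \ref{lem:Mazya-duality} combined with the triangle inequality and the crude bound $\norm{1}_{N(\mu)^*} \leq N^{-1}(1) \leq N^{-1}(1/a)$ (valid since $a \leq 1/2$) produces $\norm{g}_{N(\mu)^*} \leq 2a/N^\wedge(a)$. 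Meanwhile, expanding directly and applying Corollary \ref{cor:Ledoux} (via a standard Lipschitz approximation of $\chi_A$) yields
\[
\int f_t\, g \, d\mu \;=\; a(1-a) + \int (P_t \chi_A - \chi_A)\chi_A \, d\mu \;\geq\; a(1-a) - \sqrt{2t}\, \mu^+(A) ~.
\]

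Combining these estimates and using $a(1-a) \geq a/2$ for $a \leq 1/2$,
\[
\frac{a}{2} - \sqrt{2t}\, \mu^+(A) \;\leq\; \frac{a^{1/q}}{D\sqrt{2t}} \cdot \frac{2a}{N^\wedge(a)} ~,
\]
and balancing the two terms on the left by the choice $\sqrt{2t} = a/(4\mu^+(A))$ forces the left-hand side to be at least $a/4$; after rearrangement this gives the desired estimate $\mu^+(A) \geq c\, D\, a^{1 - 1/q}\, N^\wedge(a)$ with $c > 0$ a universal constant, which is (\ref{eq:Orlicz-12-conclusion}). The main technical obstacle is the rigorous handling of $\chi_A$: both (\ref{eq:L_q-bound}) and Corollary \ref{cor:Ledoux} are formulated for $f \in \B(\Omega)$, so one must first approximate $\chi_A$ by Lipschitz functions $g_n$ with $\norm{\abs{\nabla g_n}}_{L_1(\mu)} \to \mu^+(A)$, apply the estimates at each stage, and then pass to the limit. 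No such difficulty arises on the Orlicz-Sobolev side itself, since $f_t = P_t \chi_A$ is smooth (and hence lies in $\F$) for every $t > 0$.
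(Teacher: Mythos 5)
Your argument is essentially the paper's own proof, rearranged: the same ingredients (the semi-group $P_t$, Ledoux's $L_1$ estimate of Corollary \ref{cor:Ledoux}, the gradient bound (\ref{eq:L_q-bound}) --- which is exactly where $q \geq 2$ enters --- and Maz'ya's duality Lemma \ref{lem:Mazya-duality}) combined through the duality pairing and an optimization in $t$. Your device of pairing against the mean-zero function $g=\chi_A-\mu(A)$, so that the median constant drops out, is a nice touch: it dispenses with the appeal to Lemma \ref{lem:E-M} that the paper makes, at no cost. The approximation caveats you raise (both semi-group estimates are stated for $\B(\Omega)$, and $\chi_A$ must be approximated so that the $L_1$-gradient norms tend to $\mu^+(A)$) are exactly the ones the paper handles, so they are not an obstruction.

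There is, however, one genuine gap: you only treat sets with $a=\mu(A)\in(0,1/2]$, and this proves $I(t)\geq c D\, t^{1-1/q}N^\wedge(t)$ for $t\leq 1/2$, which is only half of the conclusion (\ref{eq:Orlicz-12-conclusion}): since $\tilde{I}(t)=\min(I(t),I(1-t))$, one also needs the same lower bound on $\mu^+(A)$ for sets with $\mu(A)=1-t\geq 1/2$, and one cannot simply pass to the complement because $\mu^+$ is an exterior boundary measure, so $\mu^+(A)\neq\mu^+(\Omega\setminus A)$ in general --- this is precisely why the paper works with $\tilde{I}$ rather than assuming $I$ symmetric. Moreover, your two norm estimates do not survive the regime $a>1/2$: the crude bounds $\norm{g}_{N(\mu)^*}\leq 2a/N^\wedge(a)$ and $\norm{\chi_A}_{L_q(\mu)}=a^{1/q}$ must be replaced by bounds in terms of $\min(a,1-a)$, which is how the paper proceeds (it uses $\norm{\chi_A-a}_{L_q(\mu)}\leq (a(1-a))^{1/q}$ for $q\geq 2$, and, splitting $\chi_A-a=(1-a)\chi_A-a\chi_{\Omega\setminus A}$ and applying Maz'ya to both pieces, $\norm{\chi_A-a}_{N(\mu)^*}\leq 2a(1-a)/N^\wedge(\min(a,1-a))$). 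Alternatively, within your scheme the fix is to run the argument on $A^c$ (which has measure at most $1/2$) and use $P_t 1=1$, so that $\abs{\chi_{A^c}-P_t\chi_{A^c}}=\abs{\chi_A-P_t\chi_A}$ and Corollary \ref{cor:Ledoux}, applied to approximations of $\chi_A$ itself, still produces $\mu^+(A)$ in the lower bound while all the Orlicz and $L_q$ quantities are measured on $A^c$. With either fix the proof is complete; note also that your choice $\sqrt{2t}=a/(4\mu^+(A))$ tacitly assumes $\mu^+(A)>0$, which is harmless since letting $t\to\infty$ in your inequality rules out $\mu^+(A)=0$.
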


\begin{rem}
We will see how to relax the assumption that $q\geq 2$ to $q \geq 1$ as well as the requirement
that $N$ is convex in Theorem \ref{thm:Orlicz-12-strong}, in which case we will get a different
lower bound on $C_{N,q}$ which will depend on $N$ and $q$.
\end{rem}

\begin{proof}[Proof of Theorem \ref{thm:Orlicz-12}]

We will prove the theorem under the assumption that our measure $\mu$ is supported on a locally
convex domain $\Omega \subset (M,g)$ with $C^2$ boundary, and is of the form $d\mu|_\Omega
= \exp(-\psi) dvol_M|_\Omega$, $\psi \in C^2(\overline{\Omega},\Real)$, as assumed in this section.
The general case follows by an approximation argument detailed in Section \ref{sec:approx}.

Since $N$ is a Young function, we may replace $M_\mu f$ in (\ref{eq:Orlicz-inq})
by $E_\mu f$ using Lemma \ref{lem:E-M} at the expense of an additional universal constant in
the final conclusion.

Let $A$ denote an arbitrary Borel set in $\Omega$, and let $\chi_{A,\eps}(x) :=
(1 - \frac{1}{\eps} d_g(x,A)) \vee 0$ denote a continuous approximation in $\Omega$ to the
characteristic function $\chi_A$ of $A$. Clearly:
\[
 \frac{\mu(A_\eps) - \mu(A)}{\eps} \geq \int \abs{\nabla{\chi_{A,\eps}}} d\mu.
\]
Applying Corollary \ref{cor:Ledoux} to functions in $\B(\Omega)$ which approximate $\chi_{A,\eps}$
(in say $W^{1,1}(\Omega,\mu)$) and passing to the limit
inferior as $\eps \rightarrow 0$, it follows that:
\[
 \sqrt{2t} \mu^+(A) \geq \int \abs{\chi_A - P_t(\chi_A) } d\mu.
\]
We start by rewriting the right hand side above as:
\begin{multline*}
\int_A (1 - P_t(\chi_A)) d\mu + \int_{\Omega \setminus A}
P_t(\chi_A) d\mu = 2\brac{\mu(A) - \int_A
P_t(\chi_A) d\mu} \\
= 2 \brac{\mu(A)(1-\mu(A)) - \int_\Omega (P_t \chi_A - \mu(A))(\chi_A - \mu(A)) d\mu} ~.
\end{multline*}
To estimate the right-most expression, we use the definition of the dual norm:
\[
 \int f g d\mu \leq \norm{f}_{N(\mu)} \norm{g}_{N(\mu)^*} .
\]
Note that we could have also used Young's inequality, yielding $2 \norm{g}_{N^*(\mu)}$ instead of
$\norm{g}_{N(\mu)^*}$ above, but this would lead to slightly worse numeric estimates. Using our assumption (\ref{eq:Orlicz-inq}) with $M_\mu$ replaced by $E_\mu$, we get:
\begin{eqnarray*}
\int_\Omega (P_t \chi_A - \mu(A))(\chi_A - \mu(A)) d\mu &\leq& \norm{P_t \chi_A - \mu(A)}_{N(\mu)} \norm{\chi_A - \mu(A)}_{N(\mu)^*} \\
 &\leq& D^{-1} \norm{\abs{\nabla P_t \chi_A}}_{L_q(\mu)} \norm{\chi_A
- \mu(A)}_{N(\mu)^*}.
\end{eqnarray*}
Using (\ref{eq:L_q-bound}) (recall that $q \geq 2$) to estimate $\norm{\abs{\nabla P_t \chi_A}}_{L_q(\mu)}$, we conclude that:
\begin{equation} \label{eq:sg-proof-conclude}
 \sqrt{2t} \mu^+(A) \geq 2 \brac{\mu(A)(1-\mu(A)) - \frac{1}{\sqrt{2t} D} \norm{\chi_A
- \mu(A)}_{L_{q}(\mu)} \norm{\chi_A - \mu(A)}_{N(\mu)^*}} ~.
\end{equation}
Using Lemma \ref{lem:Mazya-duality}, we estimate $\norm{\chi_A - \mu(A)}_{N(\mu)^*}$:
\begin{eqnarray*}
& & \!\!\!\!\!\!\!\!\!\!\!\!\!\! \norm{\chi_A - \mu(A)}_{N(\mu)^*} \leq (1-\mu(A)) \norm{\chi_A
}_{N(\mu)^*} + \mu(A)\norm{\chi_{\Omega\setminus A} }_{N(\mu)^*} \\
&=& \mu(A)(1-\mu(A))  \brac{\frac{1}{N^{\wedge}(\mu(A))} + \frac{1}{N^{\wedge}(1-\mu(A))}}  \leq
2 \frac{\mu(A)(1-\mu(A)) } {N^\wedge(\min(\mu(A),1-\mu(A)))} ~.
\end{eqnarray*}
We also have the following rough estimate (for $q \geq 2$):
\[
\norm{\chi_A - \mu(A)}_{L_{q}(\mu)} \leq \brac{\mu(A)(1-\mu(A))}^{1/q} ~.
\]
It remains to optimize on $t$. Evaluating (\ref{eq:sg-proof-conclude}) at time:
\[
 t = \frac{8 \brac{\mu(A)(1-\mu(A))}^{2/q} }{D^2 N^\wedge(\min(\mu(A),1-\mu(A)))} ~,
\]
we deduce:
\begin{eqnarray*}
\mu^+(A) &\geq & \frac{D}{4} \brac{\mu(A)(1-\mu(A))}^{1-1/q} N^\wedge(\min(\mu(A),1-\mu(A))) \\
&\geq& \frac{D}{4 \cdot 2^{1-1/q}} \min(\mu(A),1-\mu(A))^{1-1/q} N^\wedge(\min(\mu(A),1-\mu(A))) ~.
\end{eqnarray*}
This concludes the proof.
\end{proof}

\begin{rem} \label{rem:general-smoothness}
As evident from the proof, the definition of smooth convexity assumptions given in the Introduction may
be extended to encompass the more general case treated in this section. Consequently, the same remark applies
to all of the subsequent results which employ our convexity assumptions.
\end{rem}


\section{Capacities} \label{sec:capacities}

As already mentioned in the Introduction, $q$-capacity inequalities
are certain functional formulations of isoperimetric inequalities.
We conform to the definition given in
\cite{EMilmanRoleOfConvexityArxiv}, which is
a variation on the definition introduced
by Maz'ya \cite{MazyaCapacities,MazyaBook} (for general $q$) and extended by Barthe, Cattiaux and
Roberto (with $q=2$) in \cite{BCRHard} (after being introduced in \cite{BartheRoberto}).
In this section, we introduce a coherent unified framework which
provides an equivalence between capacity inequalities and weak-type
Orlicz-Sobolev functional inequalities (introduced below), and a
general sufficient condition for an equivalence to Orlicz-Sobolev
inequalities. We also provide an argument for handling general
metric probability spaces. There is essentially no novel content in
some parts of this section, and these are provided here for
completeness.

\subsection{Definitions}

\begin{dfn*}
Given a metric probability space $(\Omega,d,\mu)$, $1 \leq q<\infty$ and $0 \leq a \leq b \leq 1$,
we denote:
\[
 Cap_q(a,b) := \inf\set{ \norm{\abs{\nabla \Phi}}_{L_q(\mu)} ; \mu\set{\Phi=1} \geq a \;,\;
\mu\set{\Phi=0} \geq 1-b },
\]
where the infimum is on all $\Phi : \Omega \rightarrow [0,1]$ which are Lipschitz-on-balls.
\end{dfn*}

\begin{rem}
Both Maz'ya's definition \cite{MazyaBook} for general $q$ and the definition of
Barthe--Cattiaux--Roberto \cite{BartheRoberto,BCRHard} for the case $q=2$ use $\int
\abs{\nabla \Phi}^q d\mu$ instead of our normalized $\norm{\abs{\nabla \Phi}}_{L_q(\mu)}$. Our
definition seems more convenient, as witnessed by the formulation of our results below.
\end{rem}

\begin{rem}
The use of the metric $d$ induced by the geodesic distance on $(M,g)$ was essential for applying
the (linear) semi-group argument of the previous section. Throughout this section, as well as the
relevant parts of Sections \ref{sec:general} and \ref{sec:tensorization}, such a restriction no
longer exists, and one may use an arbitrary metric $d$. In this case, we interpret $\abs{\nabla f}$ for any $f \in \F$ as the following Borel function:
\[
 \abs{\nabla f}(x) := \limsup_{d(y,x) \rightarrow 0+} \frac{|f(y) - f(x)|}{d(x,y)} ~.
\]
(and we define it as 0 if $x$ is an isolated point - see \cite[pp. 184,189]{BobkovHoudre}
for more details).
\end{rem}

\begin{rem} \label{rem:cap-app0}
A remark which will be useful for dealing with general metric
probability spaces, is that in the definition of capacity, we may
always assume that $\int_{\set{\Phi=t}} \abs{\nabla \Phi}^q d\mu =
0$, for any $t \in (0,1)$, even though we may have
$\mu\set{\Phi=t}>0$. The argument is as follows.

Denote $\Gamma:=\set{t \in (0,1) ; \mu\set{\Phi=t}>0}$ the discrete
countable set of atoms of $\Phi$ under $\mu$, and write $\Gamma =
\set{\gamma_i}_{i=-A,\ldots,B}$, $A,B \in \set{0,1,\ldots,\infty}$,
with $\gamma_i < \gamma_{i+1}$ (and set $\gamma_{-(A+1)} = 0$ if $A
< \infty$ and $\gamma_{B+1} = 1$ if $B<\infty$). Denote $\beta_i =
(\gamma_i + \gamma_{i+1})/2$, and set for $\eps>0$:
\[
\Phi_\eps(x) := \begin{cases} \Phi(x) & \Phi(x) \in \Gamma \\
\brac{\brac{(1+\eps)(\Phi(x) - \beta_i) + \beta_i} \vee \gamma_i}
\wedge \gamma_{i+1}  & \gamma_i < \Phi(x) < \gamma_{i+1}
\end{cases} ~.
\]
Clearly $\Phi_\eps \in \F$ and $\norm{\abs{\nabla
\Phi_\eps}}_{L_q(\mu)} \leq (1+\eps)\norm{\abs{\nabla
\Phi}}_{L_q(\mu)}$, so $\Phi_\eps$ is a valid approximation. Since
$\Phi$ is Lipschitz-on-balls and $\Phi_\eps$ has the same set of
atoms $\Gamma$ as $\Phi$, it is immediate to verify that for every
$\gamma_i \in \Gamma$:
\begin{equation} \label{eq:cap-app0}
 \int_{\set{\Phi_\eps = \gamma_i}} \abs{\nabla \Phi_\eps}^q d\mu =
 \int_{\set{\Phi = \mu_i} \cup \set{\Phi = \nu_i}} \abs{\nabla \Phi_\eps}^q
d\mu ~,
\end{equation}
where:
\[
\mu_i = \beta_{i-1} + \frac{\gamma_i - \beta_{i-1}}{1+\eps} ~ \quad
, \quad ~ \nu_i = \beta_{i} + \frac{\gamma_i - \beta_i}{1+\eps} ~.
\]
But the integral on the right hand side of (\ref{eq:cap-app0}) is 0
since $\mu_i,\nu_i \notin \Gamma$.
\end{rem}

The following proposition (see \cite{MazyaCapacities}, \cite{FedererFleming}, \cite{BobkovHoudre}, \cite[Proposition A]{SodinLpIsoperimetry}) encapsulates the connection between capacity and the isoperimetric profile $I = I_{(\Omega,d,\mu)}$ (we refer to \cite{EMilmanRoleOfConvexityArxiv} for a careful proof).
\begin{prop}[Maz'ya, Federer--Fleming, Bobkov--Houdr\'e] \label{prop:Sodin-prop}
For all $0<a < b < 1$:
\begin{equation} \label{eq:Sodin-prop}
 \inf_{a \leq t \leq b} I(t) \leq Cap_1(a,b) \leq \inf_{a \leq t < b}
I(t) ~.
\end{equation}
\end{prop}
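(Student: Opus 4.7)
The plan is to prove the two inequalities separately, with the upper bound coming from a direct test-function construction and the lower bound from a coarea-type layer-cake argument.

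For the upper bound $Cap_1(a,b) \le \inf_{a \le t < b} I(t)$, I would fix an arbitrary Borel set $A$ with $a \le \mu(A) = t < b$ and, exactly as was done in the proof of Theorem \ref{thm:Orlicz-12}, test capacity against the Lipschitz-on-balls function $\chi_{A,\eps}(x) := (1 - d(x,A)/\eps) \vee 0$. Note that $\{\chi_{A,\eps} = 1\} \supseteq A$, so has measure at least $a$, and $\{\chi_{A,\eps} = 0\} = \Omega \setminus A_\eps$, so has measure $1 - \mu(A_\eps)$, which is at least $1 - b$ for all sufficiently small $\eps$ (since $\mu(A_\eps) \downarrow \mu(A) < b$). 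Because $|\nabla \chi_{A,\eps}| \le 1/\eps$ and is supported on $A_\eps \setminus A$, we get $\|\,|\nabla \chi_{A,\eps}|\,\|_{L_1(\mu)} \le (\mu(A_\eps) - \mu(A))/\eps$. Taking the liminf as $\eps \to 0^+$ yields $Cap_1(a,b) \le \mu^+(A)$, and then infimizing over $A$ with $a \le \mu(A) < b$ gives the desired inequality, using that $I(t)$ is the pointwise maximal function satisfying $I(\mu(A)) \le \mu^+(A)$.

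For the lower bound $\inf_{a \le t \le b} I(t) \le Cap_1(a,b)$, I would take an arbitrary admissible $\Phi : \Omega \to [0,1]$ with $\mu\{\Phi = 1\} \ge a$ and $\mu\{\Phi = 0\} \ge 1 - b$. The standard coarea inequality for Lipschitz-on-balls functions asserts
\[
\int |\nabla \Phi|\, d\mu \ge \int_0^1 \mu^+(\{\Phi > s\})\, ds,
\]
which in the Riemannian setting is classical and in the general metric setting follows after the usual approximation (using Remark \ref{rem:cap-app0} to rule out level sets $\{\Phi = s\}$ carrying mass of $|\nabla \Phi|$). For every $s \in (0,1)$ the level set $E_s := \{\Phi > s\}$ satisfies $a \le \mu(E_s) \le b$: the upper bound because $E_s \subseteq \{\Phi > 0\} = \Omega \setminus \{\Phi = 0\}$, the lower bound because $\{\Phi = 1\} \subseteq E_s$. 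Hence $\mu^+(E_s) \ge I(\mu(E_s)) \ge \inf_{a \le t \le b} I(t)$ for each such $s$, and integration over $s \in (0,1)$ together with infimization over $\Phi$ completes the argument.

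The main obstacle is the coarea inequality in the general metric-measure setting, where $|\nabla \Phi|$ is interpreted as the upper metric slope; this is what forces the small regularization of Remark \ref{rem:cap-app0} so that level-set atoms do not obstruct the layer-cake computation. Once the coarea step is in hand, the rest is a matter of bookkeeping the endpoints $a$ and $b$, which is also the source of the asymmetry between the two bounds in \eqref{eq:Sodin-prop}: in the upper bound the test set $A$ has a fixed measure strictly below $b$, whereas the lower bound must accommodate level sets whose measure can approach either endpoint.
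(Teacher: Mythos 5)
Your proof is correct and is essentially the standard argument that the paper itself only cites rather than reproduces (it refers to \cite{EMilmanRoleOfConvexityArxiv} and to Maz'ya, Federer--Fleming, Bobkov--Houdr\'e for a careful proof of Proposition \ref{prop:Sodin-prop}): the Bobkov--Houdr\'e co-area inequality yields the lower bound, and testing the capacity against $\chi_{A,\eps}=(1-d(\cdot,A)/\eps)\vee 0$ --- the same device used in the proof of Theorem \ref{thm:Orlicz-12} --- yields the upper bound. One cosmetic point: $\mu(A_{\eps})$ decreases to $\mu(\overline{A})$ rather than to $\mu(A)$, but this is harmless, since $\mu(\overline{A})>\mu(A)$ forces $\mu^+(A)=+\infty$, in which case the inequality $Cap_1(a,b)\le\mu^+(A)$ is trivial.
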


\noindent Since obviously $Cap_1(a,b) = Cap_1(1-b,1-a)$, we have the following useful corollary: 

\begin{cor} \label{cor:cap1}
For any non-decreasing continuous function $J:[0,1/2]\rightarrow \Real_+$:
\[
 \tilde{I}(t) \geq J(t) \;\; \forall t \in [0,1/2] \;\; \iff \;\; Cap_1(t,1/2)
\geq J(t) \;\; \forall t \in [0,1/2] ~.
\]
\end{cor}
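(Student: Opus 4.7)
The proof is a direct consequence of Proposition \ref{prop:Sodin-prop} together with the symmetry relation $Cap_1(a,b) = Cap_1(1-b,1-a)$ noted just before the corollary (obtained by replacing a test function $\Phi$ with $1-\Phi$). I would separate the two implications and handle the technical endpoint issues arising from the strict inequality on $b$ in $Cap_1(a,b) \leq \inf_{a \leq t < b} I(t)$ via the symmetry, not via semi-continuity of $I$ (which is not available).

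For the direction $(\Rightarrow)$, I would fix $t \in [0,1/2]$ and apply the left inequality of Proposition \ref{prop:Sodin-prop} to get
\[
 Cap_1(t,1/2) \; \geq \; \inf_{t \leq s \leq 1/2} I(s) \; \geq \; \inf_{t \leq s \leq 1/2} \tilde{I}(s) \; \geq \; J(t),
\]
using that $\tilde{I}(s) = \min(I(s),I(1-s)) \leq I(s)$ on $[0,1/2]$ and that $J$ is non-decreasing (so the infimum of $J$ over $[t,1/2]$ is $J(t)$).

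For the direction $(\Leftarrow)$, I would fix $t \in [0,1/2]$ and show separately that $I(t) \geq J(t)$ and $I(1-t) \geq J(t)$. For $I(t)$ with $t \in [0,1/2)$, the right inequality of Proposition \ref{prop:Sodin-prop} applied with $a=t$, $b=1/2$ and evaluating the pointwise estimate at $s=t$ yields $I(t) \geq Cap_1(t,1/2) \geq J(t)$. For $I(1-t)$ with $t \in (0,1/2]$, I would pick a small $\delta \in (0,t]$ and write, by the symmetry relation and the hypothesis,
\[
 Cap_1(1/2,\, 1-t+\delta) \;=\; Cap_1(t-\delta,\, 1/2) \;\geq\; J(t-\delta);
\]
then the right inequality of Proposition \ref{prop:Sodin-prop}, applied now with $a=1/2$, $b=1-t+\delta$ and evaluated at $s=1-t$, gives $I(1-t) \geq Cap_1(1/2,1-t+\delta) \geq J(t-\delta)$, so letting $\delta \downarrow 0$ and invoking continuity of $J$ yields $I(1-t) \geq J(t)$.

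The remaining endpoint $t=1/2$ (where $\tilde{I}(1/2) = I(1/2)$) and $t=0$ need a brief comment. At $t=1/2$ I would use $Cap_1(1/2,1/2+\eps) = Cap_1(1/2-\eps,1/2) \geq J(1/2-\eps)$ with $\eps \downarrow 0$, combined with the upper bound $Cap_1(1/2,1/2+\eps) \leq I(1/2)$ (taking $s=1/2$ in the infimum). At $t=0$, one checks that $Cap_1(0,1/2) = 0$ using the trivial test function $\Phi \equiv 0$, which together with the hypothesis forces $J(0)=0$, matching $\tilde{I}(0)=0$. The main (mild) obstacle in the whole argument is exactly this endpoint bookkeeping, which the symmetry $Cap_1(a,b) = Cap_1(1-b,1-a)$ resolves cleanly by allowing one to always stay on the left of a strict upper limit in the infimum from Proposition \ref{prop:Sodin-prop}.
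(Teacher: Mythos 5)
Your proposal is correct and follows essentially the paper's intended route: the corollary is stated there as an immediate consequence of Proposition \ref{prop:Sodin-prop} together with the symmetry $Cap_1(a,b)=Cap_1(1-b,1-a)$, with exactly the kind of monotonicity-of-$J$/continuity bookkeeping for the strict upper limit that you carry out. The only point you leave implicit is the forward direction at the endpoints $t=0$ and $t=1/2$, where Proposition \ref{prop:Sodin-prop} formally requires $0<a<b<1$; this is settled at $t=0$ by the same trivial test function argument you give, and at $t=1/2$ by the monotonicity of $Cap_1(\cdot,1/2)$ in its first argument together with continuity of $J$.
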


\medskip

\begin{dfn*}
Given $N \in \J$, denote by $N^\wedge : \Real_+ \rightarrow \Real_+$ the ``adjoint'' function:
\[
 N^\wedge(t) := \frac{1}{N^{-1}(1/t)}.
\]
\end{dfn*}

\begin{rem} \label{rem:J-wedge}
Note that the operation $N \rightarrow N^\wedge$ is an involution on $\J$, and that
$N(\cdot^\alpha)^\wedge = (N^\wedge)^{1/\alpha}$ for $\alpha > 0$.
\end{rem}

\begin{lem}
$N(t^\alpha)/t$ is non-decreasing iff $N^\wedge(t)^{1/\alpha}/t$ is non-increasing ($\alpha>0$).
\end{lem}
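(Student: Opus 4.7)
The plan is to reduce the general $\alpha$ case to the case $\alpha = 1$ using the identity from Remark \ref{rem:J-wedge}, and then establish the $\alpha=1$ version by a direct change of variables using $N^{-1}$.

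\textbf{Step 1 (Reduction to $\alpha=1$).} Set $M(t) := N(t^\alpha)$. Since $\alpha > 0$ and $N \in \J$, one checks $M \in \J$. By Remark \ref{rem:J-wedge}, $M^\wedge = N(\cdot^\alpha)^\wedge = (N^\wedge)^{1/\alpha}$. Hence the two statements to be shown equivalent read exactly as: $M(t)/t$ non-decreasing iff $M^\wedge(t)/t$ non-increasing. So it suffices to prove the equivalence for any $M \in \J$ in the case $\alpha = 1$.

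\textbf{Step 2 (The case $\alpha = 1$ via substitution).} Since $M \in \J$ is an increasing continuous bijection of $\Real_+$ onto itself, $M^{-1}$ is also an increasing continuous bijection. Given $t > 0$, set $s = M^{-1}(1/t)$, equivalently $t = 1/M(s)$; this establishes an order-reversing bijection between $t \in (0,\infty)$ and $s \in (0,\infty)$. Then directly from $M^\wedge(t) = 1/M^{-1}(1/t) = 1/s$,
\[
\frac{M^\wedge(t)}{t} = \frac{1/s}{1/M(s)} = \frac{M(s)}{s}.
\]
Because the map $t \mapsto s$ reverses order, $t \mapsto M^\wedge(t)/t$ is non-increasing if and only if $s \mapsto M(s)/s$ is non-decreasing. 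This gives the equivalence for $M$, and combined with Step 1 yields the claim for $N$ with arbitrary $\alpha > 0$.

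\textbf{Anticipated difficulty.} There is essentially no obstacle here; the identity $M^\wedge(t)/t = M(s)/s$ under $s = M^{-1}(1/t)$ does all the work, and the role of $\alpha$ is absorbed entirely into Remark \ref{rem:J-wedge}. The only minor point worth verifying is that $N \in \J$ guarantees $N^{-1}$ is a genuine continuous increasing bijection of $\Real_+$ (so that $N^\wedge$ and the substitution $s = M^{-1}(1/t)$ are well-defined on all of $(0,\infty)$), but this is immediate from the definition of $\J$.
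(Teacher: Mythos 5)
Your proof is correct and takes essentially the same route as the paper: reduce to $\alpha=1$ via Remark \ref{rem:J-wedge} and then pass between $N(t)/t$ and $N^\wedge(s)/s$ through the substitution $s=N^{-1}(1/t)$, which is exactly the paper's argument. The only cosmetic difference is that the paper proves a single direction and recovers the converse from the involution $N \mapsto N^\wedge$, whereas your order-reversing bijection yields both directions simultaneously.
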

\begin{proof}
It is enough to prove the ``only if'' direction for $\alpha=1$ by Remark \ref{rem:J-wedge}. Our
assumption is that for all $0 < t_1 \leq t_2$:
\[
 N(t_1) / t_1 \leq N(t_2) / t_2 ~.
\]
Let $s_1 \geq s_2 > 0$ be given. Using $t_i = N^{-1}(1/s_i)$, $i=1,2$ above (which is legitimate
since $N$ is increasing), we deduce:
\[
N^\wedge(s_1) / s_1 \leq N^\wedge(s_2) / s_2 ~,
\]
as required.
\end{proof}

We denote by $L_{s,\infty}(\mu)$ the weak $L_s$ quasi-norm, defined as:
\[
 \norm{f}_{L_{s,\infty}(\mu)} := \sup_{t>0} \mu(|f| \geq t)^{1/s} t.
\]
We now extend the definition of the weak $L_{s}$ quasi-norm to Orlicz quasi-norms $N(\mu)$, using the adjoint
function $N^\wedge$:
\begin{dfn*}
Given $N\in \J$, define the weak $N(\mu)$ quasi-norm as:
\[
 \norm{f}_{N(\mu),\infty} := \sup_{t>0} N^\wedge(\mu\set{\abs{f} \geq t}) t.
\]
\end{dfn*}
\noindent This definition is consistent with the one for $L_{s,\infty}$, and satisfies:
\begin{equation} \label{eq:weak-norm}
 \norm{f}_{N(\mu),\infty} \leq \norm{f}_{N(\mu)} ~,
\end{equation}
as easily checked using the Markov-Chebyshev inequality. Also note that by a simple
union-bound:
\[
\norm{f+g}_{N(\mu),\infty} \leq 2 \brac{\norm{f}_{N(\mu),\infty} + \norm{g}_{N(\mu),\infty}} ~.
\]

\begin{rem}
The motivation for the definition of $N^\wedge$ stems from the immediate observation that for any
Borel set $A$:
\[
 \norm{\chi_A}_{N(\mu)} =  \norm{\chi_A}_{N(\mu),\infty} = N^\wedge(\mu(A)) ~.
\]
For this reason, the expression $1 / N^{-1}(1/t)$ already appears in the works of Maz'ya \cite[p. 112]{MazyaBook}
and Roberto--Zegarlinski \cite{RobertoZegarlinski}.
\end{rem}

\begin{dfn*}
An inequality of the  form:
\begin{equation} \label{eq:Lq-implies-capq}
\forall f \in \F \;\;\; D \norm{f - M_\mu f}_{N(\mu),\infty} \leq
\norm{\abs{\nabla f}}_{L_q(\mu)}
\end{equation}
is called a weak-type Orlicz-Sobolev inequality.
\end{dfn*}

\begin{lem} \label{lem:Lq-implies-capq}
The weak-type Orlicz-Sobolev inequality (\ref{eq:Lq-implies-capq}) implies:
\[
 Cap_q(t,1/2) \geq D N^\wedge(t) \;\;\; \forall t \in [0,1/2]
\]
\end{lem}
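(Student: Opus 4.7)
The plan is to unwrap both sides of the weak-type Orlicz–Sobolev inequality on a generic candidate $\Phi$ for the capacity, and read off the claimed lower bound.

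First, I would fix $t \in (0,1/2]$ (the case $t=0$ is trivial since $N^\wedge(0) = 1/N^{-1}(\infty) = 0$) and take an arbitrary Lipschitz-on-balls $\Phi : \Omega \to [0,1]$ with $\mu\{\Phi = 1\} \geq t$ and $\mu\{\Phi = 0\} \geq 1/2$, i.e.\ any admissible test function for $Cap_q(t,1/2)$. The key observation is that the condition $\mu\{\Phi = 0\} \geq 1/2$ means the value $0$ is a legitimate median of $\Phi$: indeed $\mu(\Phi \geq 0) = 1 \geq 1/2$ and $\mu(\Phi \leq 0) \geq \mu(\Phi = 0) \geq 1/2$. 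So we may take $M_\mu \Phi = 0$, in which case $\Phi - M_\mu \Phi = \Phi$, and the assumed weak-type inequality (\ref{eq:Lq-implies-capq}) applied to $\Phi$ reads
\[
 D \, \norm{\Phi}_{N(\mu),\infty} \leq \norm{\abs{\nabla \Phi}}_{L_q(\mu)}.
\]

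Next, I would bound the left-hand side from below in terms of $N^\wedge(t)$. By the definition of the weak quasi-norm,
\[
 \norm{\Phi}_{N(\mu),\infty} = \sup_{s > 0} N^\wedge(\mu\{\Phi \geq s\}) \, s,
\]
so in particular, evaluating at $s = 1$ (which is legal since $\Phi \leq 1$ so $\{\Phi \geq 1\} = \{\Phi = 1\}$),
\[
 \norm{\Phi}_{N(\mu),\infty} \geq N^\wedge\!\bigl(\mu\{\Phi = 1\}\bigr) \geq N^\wedge(t),
\]
where the last step uses that $N^\wedge(s) = 1/N^{-1}(1/s)$ is non-decreasing in $s$ (since $N^{-1}$ is increasing) together with $\mu\{\Phi = 1\} \geq t$. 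Combining the two displays gives $\norm{\abs{\nabla \Phi}}_{L_q(\mu)} \geq D\, N^\wedge(t)$, and taking the infimum over all admissible $\Phi$ yields exactly $Cap_q(t,1/2) \geq D\, N^\wedge(t)$.

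I do not expect any real obstacle here — the argument is essentially just choosing the right median and the right level set. The only point worth double-checking is the freedom in the choice of $M_\mu$: the weak-type inequality (\ref{eq:Lq-implies-capq}) is written with ``the'' median, but since medians need not be unique and the quasi-norm depends on the choice, one has to confirm that picking $M_\mu\Phi = 0$ is compatible with the stated hypothesis. This is fine because $0$ genuinely satisfies the median conditions spelled out in the introduction, so the hypothesis does apply with this choice. The monotonicity of $N^\wedge$ (used in the lower bound) is immediate from $N \in \J$, and no convexity or growth assumption on $N$ is needed at this stage; correspondingly, no convexity assumption on $(\Omega,d,\mu)$ enters the argument.
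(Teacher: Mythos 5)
Your argument is correct and is essentially identical to the paper's proof: apply the weak-type inequality to an admissible $\Phi$ (noting that $0$ is a median since $\mu\{\Phi=0\}\geq 1/2$), bound $\norm{\Phi}_{N(\mu),\infty}$ from below by $N^\wedge(\mu\{\Phi=1\})\geq N^\wedge(t)$ via the level $s=1$ and monotonicity of $N^\wedge$, and take the infimum over $\Phi$. Your extra remarks on the choice of median and on not needing convexity are accurate but do not change the substance.
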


\begin{proof}
Apply (\ref{eq:Lq-implies-capq}) to $f = \Phi$, where $\Phi : \Omega \rightarrow
[0,1]$ is any Lipschitz-on-balls function so that $\mu\set{\Phi=1} \geq t$ and
$\mu\set{\Phi=0} \geq 1/2$. Since $M_\mu \Phi = 0$, it
follows that:
\[
 \norm{\abs{\nabla \Phi}}_{L_q(\mu)} \geq D \norm{\Phi}_{N(\mu),\infty} \geq D
N^\wedge(\mu(\set{\Phi=1})) \geq D N^\wedge(t),
\]
Taking the infimum
over all $\Phi$ as above, the assertion is verified.
\end{proof}

\subsection{Equivalences}

\begin{prop} \label{prop:Capq-Lq-weak}
Let $1 \leq q<\infty$, then the following statements are equivalent:
\begin{enumerate}
 \item
\begin{equation} \label{eq:Capq-implies-Lq-weak}
 \forall f \in \F \;\;\; D_1 \norm{f - M_\mu f}_{N(\mu),\infty} \leq
\norm{\abs{\nabla f}}_{L_q(\mu)} \;\; ,
\end{equation}
\item
\[
 Cap_q(t,1/2) \geq D_2 N^\wedge(t) \;\;\; \forall t \in [0,1/2] \;\; ,
\]
\end{enumerate}
and the best constants $D_1,D_2$ above satisfy $ D_1 \leq D_2 \leq 4 D_1$.
\end{prop}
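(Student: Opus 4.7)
The plan is to prove the two directions separately. Direction $(1) \Rightarrow (2)$ is immediate: it is exactly the content of Lemma \ref{lem:Lq-implies-capq}, which establishes $D_2 \geq D_1$. So the work lies entirely in proving $(2) \Rightarrow (1)$ with $D_1 \geq D_2/4$ via a standard truncation argument.

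For $(2) \Rightarrow (1)$, given $f \in \F$, since both sides of (\ref{eq:Capq-implies-Lq-weak}) are invariant under subtracting a constant, I may assume $M_\mu f = 0$. I would then split $f = f_+ - f_-$ with $f_\pm := (\pm f) \vee 0$, noting that the definition of the median gives $\mu\{f_\pm = 0\} \geq 1/2$ and consequently $\mu\{f_\pm > 0\} \leq 1/2$. For each $t > 0$ and each $g \in \{f_+, f_-\}$, I would form the truncation $\Phi_t := (g/t) \wedge 1$, which lies in $\F$ and satisfies $\{\Phi_t = 1\} \supseteq \{g \geq t\}$ (a set of measure at most $1/2$), $\{\Phi_t = 0\} \supseteq \{g = 0\}$ (measure at least $1/2$), and the pointwise bound $\abs{\nabla \Phi_t} \leq \abs{\nabla f}/t \cdot \chi_{\{0 < g < t\}}$. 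Hence $\Phi_t$ is admissible for $Cap_q(\mu\{g \geq t\},1/2)$, and hypothesis (2) gives
\[
 D_2 \, N^\wedge(\mu\{g \geq t\}) \cdot t \leq Cap_q(\mu\{g \geq t\},1/2) \cdot t \leq \norm{\abs{\nabla \Phi_t}}_{L_q(\mu)} \cdot t \leq \norm{\abs{\nabla f}}_{L_q(\mu)}.
\]
Taking the supremum over $t > 0$ yields $\norm{g}_{N(\mu),\infty} \leq \norm{\abs{\nabla f}}_{L_q(\mu)}/D_2$ for each $g \in \{f_+,f_-\}$.

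To conclude, I would invoke the quasi-triangle inequality for the weak Orlicz quasi-norm recorded just before the statement, using the key observation that $|f| = f_+ + f_-$ (since $f_+$ and $f_-$ have disjoint supports), so that
\[
 \norm{f}_{N(\mu),\infty} = \norm{f_+ + f_-}_{N(\mu),\infty} \leq 2\brac{\norm{f_+}_{N(\mu),\infty} + \norm{f_-}_{N(\mu),\infty}} \leq \frac{4}{D_2} \norm{\abs{\nabla f}}_{L_q(\mu)},
\]
which gives $D_1 \geq D_2/4$ as required.

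The argument is essentially mechanical; the only technical points to verify are that $\Phi_t$ is genuinely Lipschitz-on-balls with $\abs{\nabla \Phi_t} \leq \abs{\nabla f}/t$ (immediate in the Riemannian setting, and a short check using the limsup definition of $\abs{\nabla \cdot}$ in the general metric setting), and that the atoms $\Phi_t$ inevitably has at $0$ and $1$ do not obstruct admissibility for the capacity infimum (handled by Remark \ref{rem:cap-app0}). The factor $4$ in the final estimate is forced by the combination of the two-sided splitting and the factor $2$ in the weak-norm union bound, and appears to be the natural cost of this method.
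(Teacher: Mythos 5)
Your proof is correct and takes essentially the same route as the paper's: the forward direction is Lemma \ref{lem:Lq-implies-capq}, and the reverse direction proceeds by splitting at the median into $f_\pm$, testing the capacity hypothesis against the truncations $(g/t)\wedge 1$, taking the supremum over $t>0$, and recombining with the factor-$2$ union bound for the weak quasi-norm, arriving at the same constant $4$. The only (harmless) deviations are that you bound $\norm{\abs{\nabla \Phi_t}}_{L_q(\mu)} \leq \norm{\abs{\nabla f}}_{L_q(\mu)}/t$ directly, which lets you bypass the paper's decomposition $\int \abs{\nabla f}^q d\mu = \int \abs{\nabla f_+}^q d\mu + \int \abs{\nabla f_-}^q d\mu$ and its $2^{1/q-1}$ power-mean step, so your appeal to Remark \ref{rem:cap-app0} is actually unnecessary (atoms of $\Phi_t$ at $0$ and $1$ never obstruct admissibility), and the pointwise estimate should simply be $\abs{\nabla \Phi_t} \leq \abs{\nabla f}/t$ (the indicator $\chi_{\set{0<g<t}}$ can fail at boundary points of $\set{g=0}$, but you only use the cruder bound anyway).
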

\begin{proof}
$D_2 \geq D_1$ by Lemma \ref{lem:Lq-implies-capq}. To see the other
direction, note that by approximating $f$ (as in Remark
\ref{rem:cap-app0}), we may assume that $\int_{\set{f=t}}
\abs{\nabla f}^q d\mu = 0$ for all $t \in \Real$, and by replacing
$f$ with $f - M_\mu f$, that $M_\mu f = 0$. Note that if suffices to
show (\ref{eq:Capq-implies-Lq-weak}) with $D_1 = D_2$ for
non-negative functions for which $\mu\set{f=0} \geq 1/2$, since for
a general function as above, we can apply
(\ref{eq:Capq-implies-Lq-weak}) to $f_+ = f \chi_{f \geq 0}$ and to
$f_- = -f \chi_{f \leq 0}$, which yields:
\begin{multline*}
\norm{\abs{\nabla f}}_{L_q(\mu)} =
\brac{\int \abs{\nabla f_+}^q d\mu + \int \abs{\nabla f_-}^q d\mu}^{1/q} \geq
D_1 \brac{\norm{f_+}^q_{N(\mu),\infty} + \norm{f_-}^q_{N(\mu),\infty}}^{1/q} \\
\geq D_1 2^{1/q-1} \brac{
\norm{f_+}_{N(\mu),\infty} +
\norm{f_-}_{N(\mu),\infty}} \geq \frac{D_1}{4}
\norm{f}_{N(\mu),\infty} ~.
\end{multline*}

Given a non-negative function $f$ as above ($\mu\set{f=0} \geq 1/2$ hence $M_\mu f = 0$), and $t>0$, define $\Omega_t = \set{f \leq
t}$ and $f_t := f/t \wedge 1$. Then:
\begin{eqnarray*}
 \brac{\int_\Omega \abs{\nabla f}^q d\mu}^{1/q} &\geq& \brac{\int_{\Omega_t} \abs{\nabla f}^q
d\mu}^{1/q} \geq t \brac{\int_{\Omega} \abs{\nabla f_t}^q d\mu}^{1/q} \\
&\geq& t Cap_q(\mu\set{f_t \geq 1},1/2) \geq D_2 t N^\wedge(\mu\set{f \geq
t}) ~.
\end{eqnarray*}
Taking supremum on $t>0$, the assertion follows.
\end{proof}

\begin{prop} \label{prop:Capq-Lq}
If $N(t)^{1/q}/t$ is non-decreasing on $\Real_+$ with $1 \leq q<\infty$, then the following statements are
equivalent:
\begin{enumerate}
 \item
\begin{equation} \label{eq:Capq-implies-Lq}
 \forall f \in \F \;\;\; D_1 \norm{f - M_\mu f}_{N(\mu)} \leq
\norm{\abs{\nabla f}}_{L_q(\mu)} \;\; ,
\end{equation}
\item
\[
 Cap_q(t,1/2) \geq D_2 N^\wedge(t) \;\;\; \forall t \in [0,1/2] \;\; ,
\]
\end{enumerate}
and the best constants $D_1,D_2$ above satisfy $ D_1 \leq D_2 \leq 4 D_1$.
\end{prop}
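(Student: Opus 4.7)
The proof will have two directions, with the forward implication being the straightforward one and the reverse being where the hypothesis on $N$ enters.

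\textbf{Easy direction $D_2 \geq D_1$:} Since $\norm{\cdot}_{N(\mu)} \geq \norm{\cdot}_{N(\mu),\infty}$ by (\ref{eq:weak-norm}), condition (1) implies the weak-type inequality (\ref{eq:Capq-implies-Lq-weak}) with the same constant $D_1$, and Lemma \ref{lem:Lq-implies-capq} yields (2) with $D_2 \geq D_1$.

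\textbf{Hard direction $D_2 \leq 4 D_1$:} I plan a direct dyadic level-set argument rather than upgrading the weak-type conclusion of Proposition \ref{prop:Capq-Lq-weak}. Given $f \in \F$, normalize so $M_\mu f = 0$, and by the approximation of Remark \ref{rem:cap-app0} applied to the (countable) atoms of $f$ under $\mu$, assume $\int_{\set{f=t}} \abs{\nabla f}^q d\mu = 0$ for every $t \in \Real$. For each $k \in \mathbb{Z}$ set
\[
\Phi_k^{\pm}(x) := \min\brac{1, \max(0, \pm f(x)/2^k - 1)}.
\]
This is Lipschitz-on-balls, takes values in $[0,1]$, equals $1$ on $\set{\pm f \geq 2^{k+1}}$, and vanishes on $\set{\pm f \leq 2^k}$. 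Since $M_\mu f = 0$ and $2^k > 0$, the latter set has $\mu$-measure $\geq 1/2$, so $\Phi_k^{\pm}$ is admissible for $Cap_q(V_{k+1}^{\pm}, 1/2)$, where $V_j^{\pm} := \mu\set{\pm f \geq 2^j}$. Noting that $\abs{\nabla \Phi_k^{\pm}} = 2^{-k}\abs{\nabla f}$ on the disjoint ``dyadic rings'' $A_k^+ := \set{2^k < f < 2^{k+1}}$ and $A_k^- := \set{-2^{k+1} < f < -2^k}$, which partition $\set{f \neq 0}$ up to a null set, applying (2) to each $\Phi_k^{\pm}$, summing over $k$, and shifting indices yields
\[
\sum_{k \in \mathbb{Z}} 2^{kq} \brac{N^\wedge(V_k^+)^q + N^\wedge(V_k^-)^q} \leq 2^q D_2^{-q} \norm{\abs{\nabla f}}_{L_q(\mu)}^q. \quad (\ast)
\]

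It remains to control $\norm{f}_{N(\mu)}$ by the left-hand side of $(\ast)$. For $v > 0$, the layer-cake partition gives $\int N(\abs{f}/v) d\mu \leq \sum_k N(2^{k+1}/v)(V_k^+ + V_k^-)$. The hypothesis ``$N(t)^{1/q}/t$ non-decreasing'' is equivalent to $N(\lambda x) \leq \lambda^q N(x)$ for $0 < \lambda \leq 1$; applying this with $x := 1/N^\wedge(V_k^{\pm})$ (so $V_k^{\pm} N(x) = 1$ by definition of $N^\wedge$) and $\lambda := 2^{k+1} N^\wedge(V_k^{\pm})/v$ bounds each summand by $(2^{k+1} N^\wedge(V_k^{\pm})/v)^q$, provided every such $\lambda \leq 1$. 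Choosing
\[
v := 4 D_2^{-1} \norm{\abs{\nabla f}}_{L_q(\mu)},
\]
the bound $(\ast)$ simultaneously certifies (i) that each $\lambda \leq 1$, since each individual summand on the left of $(\ast)$ is dominated by the total, and (ii) that $\int N(\abs{f}/v) d\mu \leq 1$. Hence $\norm{f}_{N(\mu)} \leq v$, i.e., $D_1 \geq D_2/4$.

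The main obstacle is the bookkeeping: ensuring that $\Phi_k^{\pm}$ is an admissible capacity test function for \emph{every} $k \in \mathbb{Z}$ (which requires $M_\mu f = 0$ together with the positivity $2^k > 0$), and selecting a single scalar $v$ that simultaneously enforces the self-consistency $\lambda \leq 1$ for all $k$ and the Orlicz normalization $\int N(\abs{f}/v) d\mu \leq 1$, with the sharp factor $4$ rather than a larger constant. The identity $V_k^{\pm} N(1/N^\wedge(V_k^{\pm})) = 1$ together with the sub-$t^q$ growth of $N$ is what makes the dyadic summation telescope cleanly with matching constants.
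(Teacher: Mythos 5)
Your argument is correct and yields the stated constant $4$, and its core coincides with the paper's: the easy direction is the same appeal to (\ref{eq:weak-norm}) and Lemma \ref{lem:Lq-implies-capq}, and the hard direction is in both cases a dyadic level-set decomposition in which truncated plateau functions of $f$ are fed to the capacity bound, followed by a summation of the Orlicz integral using the hypothesis that $N(t)^{1/q}/t$ is non-decreasing (your formulation $N(\lambda x)\le\lambda^q N(x)$ for $\lambda\le 1$ is exactly the paper's $J^{-1}(x)/J^{-1}(y)\le (x/y)^q$ for $x\le y$, with $J=N^\wedge$). Where you genuinely differ is the organization of the decomposition: the paper first reduces to non-negative $f$ with $\mu\set{f=0}\ge 1/2$ by splitting $f=f_+-f_-$ and proving the $\ell_q$-type quasi-additivity $\norm{f}_{N(\mu)}^q\le\norm{f_+}_{N(\mu)}^q+\norm{f_-}_{N(\mu)}^q$, then treats bounded $f$ normalized by $\norm{f}_{L_\infty}=1$ with one-sided levels $2^{-i}$, and finally removes boundedness by truncation and monotone convergence; your two-sided sum over $k\in\mathbb{Z}$, run simultaneously for both signs (admissibility of $\Phi_k^\pm$ coming directly from $M_\mu f=0$ and $2^k>0$), absorbs both of these reductions and handles unbounded $f$ in one stroke, at the price of the extra but trivial observation that each summand of your $(\ast)$ is dominated by the total, which certifies $\lambda\le 1$ for the single choice of $v$ (the paper's normalizer $V$ plays that role there). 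Two points you use implicitly are fine but worth stating: terms with $V_k^\pm=0$ are simply zero, so the substitution $x=1/N^\wedge(V_k^\pm)$ is only needed when $V_k^\pm>0$; and discarding the level sets $\set{\pm f=2^k}$ when comparing $\abs{\nabla\Phi_k^\pm}$ with $2^{-k}\abs{\nabla f}$ is precisely what the reduction via Remark \ref{rem:cap-app0} licenses (subtracting the median commutes with that approximation, so the order of the two normalizations is immaterial). With these remarks your factor $4$ arises from one factor $2$ in the index shift and one in the layer-cake bound, matching $D_1\le D_2\le 4D_1$.
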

\begin{rem}
As already mentioned in the Introduction, we call an inequality of the form
(\ref{eq:Capq-implies-Lq}) an Orlicz-Sobolev inequality (even though $N$ may not be convex).
\end{rem}
\begin{rem}
One may show (see e.g. the proof of \cite[Theorem 1]{RobertoZegarlinski}) that when $N(t^{1/q})$ is
convex (so in particular $N(t)^{1/q}/t$ is non-decreasing),
Proposition \ref{prop:Capq-Lq} is equivalent to a theorem of Maz'ya
\cite[p. 112]{MazyaBook}, but there the condition on $N$ is hidden.
Such a stronger assumption is too restrictive for our purposes.
Under this stronger assumption, the statement of this proposition was used in the case $q=2$ in \cite{RobertoZegarlinski} and for $N(t)=t^{2},q=2$ in \cite{BCRHard}.
\end{rem}

\begin{proof}
$D_2 \geq D_1$ by (\ref{eq:weak-norm}) and Lemma
\ref{lem:Lq-implies-capq}. To see the other direction, we assume
again (as in Remark \ref{rem:cap-app0}) that $\int_{\set{f=t}}
\abs{\nabla f}^q d\mu = 0$ for all $t \in \Real$, and by replacing
$f$ with $f - M_\mu f$, that $M_\mu f = 0$. Again, if suffices to
show (\ref{eq:Capq-implies-Lq}) for non-negative functions for which
$\mu\set{f=0} \geq 1/2$, but now we do not lose in the constant.
Indeed, for a general function as above, we can apply
(\ref{eq:Capq-implies-Lq}) to $f_+ = f \chi_{f \geq 0}$ and to $f_-
= -f \chi_{f \leq 0}$, which yields:
\[
\norm{\abs{\nabla f}}^q_{L_q(\mu)} =
\int \abs{\nabla f_+}^q d\mu + \int \abs{\nabla f_-}^q d\mu \geq
D_1^q \brac{\norm{f_+}^q_{N(\mu)} + \norm{f_-}^q_{N(\mu)}} \geq
 D_1^q \norm{f}^q_{N(\mu)}.
\]
The last inequality follows from the fact that 
$N^{1/q}(t)/t$ is non-decreasing, so denoting $v_{\pm} = \norm{f_{\pm}}_{N(\mu)}$,
we indeed verify that:
\begin{multline*}
\int N\brac{\frac{f_+ + f_-}{(v^q_+ + v^q_-)^{1/q}}} d\mu =
 \int N\brac{\frac{f_+}{v_+} \frac{v_+}{(v^q_+ + v^q_-)^{1/q}}} d\mu +
\int N\brac{\frac{f_-}{v_-} \frac{v_-}{(v^q_+ + v^q_-)^{1/q}}} d\mu \\
\leq
\frac{v_+^q}{v^q_+ + v^q_-}  \int  N\brac{\frac{f_+}{v_+}}d\mu +
\frac{v_-^q}{v^q_+ + v^q_-}  \int  N\brac{\frac{f_-}{v_-}}d\mu \leq 1 ~.
\end{multline*}

We will first assume that $f$ is bounded. Given a bounded
non-negative function $f$ as above ($M_\mu f = 0$ and $\mu\set{f=0}
\geq 1/2$), we may assume by homogeneity that $\norm{f}_{L_\infty} =
1$. For $i\geq 1$, denote $\Omega_i = \set{1/2^{i} \leq f \leq
1/2^{i-1}}$, $m_i = \mu(\Omega_i)$, $f_i = 2^{i}(f - 1/2^{i}) \vee 0
\wedge 1$ and set $m_0=0$. Also denote $J := N^\wedge$. Now:
\begin{multline*}
\norm{\abs{\nabla f}}_{L_q(\mu)}^q = \sum_{i=1}^\infty
\int_{\Omega_i} \abs{\nabla f}^q d\mu \geq
\sum_{i=1}^\infty \frac{1}{2^{qi}} \int_{\Omega} \abs{\nabla f_i}^q d\mu \\
\geq \sum_{i=1}^\infty
\frac{1}{2^{q i}} Cap^q_q(\mu\set{f \geq 1/2^{i-1}},1/2) \geq D_2^q \sum_{i=2}^\infty
\frac{J^q(m_{i-1})}{2^{q i}} = \frac{D_2^q}{4^q} V^q,
\end{multline*}
where:
\[
V := \brac{\sum_{i=1}^\infty \frac{J^q(m_{i})}{2^{q (i-1)}}}^{1/q} ~.
\]
It remains to show that $\norm{f}_{N(\mu)} \leq V$. Indeed:
\[
 \int_\Omega N\brac{\frac{f}{V}} d\mu \leq \sum_{i=1}^\infty m_i N\brac{\frac{1}{2^{i-1}
V}}
= \sum_{i=1}^\infty \frac{J^{-1}(J(m_i))}{J^{-1}(2^{i-1} V)} \leq \sum_{i=1}^\infty
\frac{J^q(m_i)}{2^{q(i-1)} V^q} = 1,
\]
where in the last inequality we have used the fact that $N(t)^{1/q}/t$ is non-decreasing, hence
$(J^{-1})^{1/q}(t)/t$ is non-decreasing, and therefore:
\[
\frac{ J^{-1}(x) } {J^{-1}(y) } \leq \brac{\frac{x}{y}}^q,
\]
whenever $x/y \leq 1$, which is indeed the case for us.

For a non-bounded $f\in \F$ with $\mu\set{f=0} \geq 1/2$, we may
define $f_m = f \wedge b_m$ so that $\mu\set{f
> b_m} \leq 1/m$ and (just for safety) $\mu\set{f=b_m} = 0$. It then follows
by what was proved for bounded functions that:
\[
\norm{\abs{\nabla f}}_{L_q(\mu)} \geq \lim_{m \rightarrow \infty}
\norm{\abs{\nabla f_m}}_{L_q(\mu)} \geq D_1 \lim_{m \rightarrow
\infty} \norm{f_m}_{N(\mu)} = D_1 Z ~,
\]
where all limits exist since they are non-decreasing. To conclude,
$Z \geq \norm{f}_{N(\mu)}$, since $N$ is continuous, so by
the Monotone Convergence Theorem:
\[
\int N(f/Z) d\mu = \int \lim_{m \rightarrow \infty}
N(f_m/Z) d\mu = \lim_{m \rightarrow \infty} \int
N(f_m/Z) d\mu \leq 1 ~.
\]
\end{proof}

\medskip

We immediately deduce from Propositions \ref{prop:Capq-Lq-weak} and \ref{prop:Capq-Lq} the following peculiar corollary on the equivalence of the weak and usual Orlicz norms for some functional inequalities:

\begin{cor}
Let $N \in \J$, and assume that $N(t)^{1/q}/t$ is non-decreasing on $\Real_+$
with $1 \leq q <\infty$. Then the following statements are equivalent:
\begin{enumerate}
 \item
\[
 \forall f \in \F \;\;\; D_1 \norm{f - M_\mu f}_{N(\mu)} \leq
\norm{\abs{\nabla f}}_{L_q(\mu)} \;\; ,
\]
\item
\[
 \forall f \in \F \;\;\; D_2 \norm{f - M_\mu f}_{N(\mu),\infty} \leq
\norm{\abs{\nabla f}}_{L_q(\mu)} \;\; ,
\]
\end{enumerate}
and the best constants $D_1,D_2$ above satisfy $ D_1 \leq D_2 \leq 4 D_1$.
\end{cor}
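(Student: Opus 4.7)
The plan is to derive this corollary immediately by composing the two preceding propositions, since both the strong and weak Orlicz-Sobolev inequalities have been shown to be equivalent (up to universal constants) to the same $q$-capacity inequality $Cap_q(t,1/2) \geq D N^\wedge(t)$ for $t \in [0,1/2]$. No new analysis is needed; the hypothesis that $N(t)^{1/q}/t$ is non-decreasing is exactly what is required so that Proposition \ref{prop:Capq-Lq} applies to statement (1), while Proposition \ref{prop:Capq-Lq-weak} applies to statement (2) with no extra hypothesis.

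For the direction $(1) \Rightarrow (2)$, I would simply invoke the pointwise inequality \eqref{eq:weak-norm}, namely $\norm{g}_{N(\mu),\infty} \leq \norm{g}_{N(\mu)}$ applied to $g = f - M_\mu f$. This yields $D_2 \geq D_1$ without any further effort, and in particular does not even use the monotonicity hypothesis on $N$.

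For the reverse direction $(2) \Rightarrow (1)$, the strategy is to route through capacity. Given (2) with best constant $D_2$, Lemma \ref{lem:Lq-implies-capq} produces the capacity inequality $Cap_q(t,1/2) \geq D_2 N^\wedge(t)$ for all $t \in [0,1/2]$. Then, invoking Proposition \ref{prop:Capq-Lq} (which requires precisely the assumption that $N(t)^{1/q}/t$ is non-decreasing), this capacity inequality implies statement (1) with a constant at least $D_2/4$. Hence the best constant in (1) satisfies $D_1 \geq D_2/4$, i.e.\ $D_2 \leq 4 D_1$.

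Combining the two bounds yields $D_1 \leq D_2 \leq 4 D_1$, as claimed. There is no genuine obstacle here since the heavy lifting, in particular the cutoff-and-level-set decomposition argument comparing $\norm{f}_{N(\mu)}$ to a sum $\sum_i J^q(m_i)/2^{q(i-1)}$ of capacities, has already been carried out inside Proposition \ref{prop:Capq-Lq}; the role of the corollary is only to record the amusing consequence that replacing the Orlicz norm by its weak variant on the left-hand side of an Orlicz-Sobolev inequality costs at most a factor of $4$ under the stated monotonicity condition on $N$.
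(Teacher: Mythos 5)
Your proposal is correct and matches the paper's intent: the corollary is deduced precisely by routing through the capacity inequality via Proposition \ref{prop:Capq-Lq-weak} (or, equivalently, Lemma \ref{lem:Lq-implies-capq}) and Proposition \ref{prop:Capq-Lq}, with the trivial bound $D_1 \leq D_2$ coming from the norm comparison (\ref{eq:weak-norm}). Your observation that the direct use of (\ref{eq:weak-norm}) is what yields the sharp lower bound $D_1 \leq D_2$ (rather than the weaker $D_1/4 \leq D_2$ from blindly composing the two propositions) is exactly the right bookkeeping.
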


\begin{rem}
This corollary seems useful, even in the case of $F(t) = t^2$ and $q=2$, where this amounts to an
equivalent characterization of the classical Poincar\'e inequality, using the weak $L_{2,\infty}$
quasi-norm on the left hand side. We do not know whether this characterization was previously
noticed.
\end{rem}

Another useful fact which follows from Propositions \ref{prop:Capq-Lq-weak} and \ref{prop:Capq-Lq} is that the
behavior of $N$ at a neighborhood of 0 is simply irrelevant as far as Orlicz-Sobolev inequalities are concerned:

\begin{cor} \label{cor:N-at-0}
Let $N \in \J$, and assume that $N(t)^{1/q}/t$ is non-decreasing on $\Real_+$
with $1\leq q<\infty$. Define:
\[
N_0(t) = \begin{cases} 2 (t/N^{-1}(2))^q  & t \in [0,N^{-1}(2)] \\ N(t) & t \in [N^{-1}(2),\infty) \end{cases} ~.
\]
Then the following statements are equivalent:
\begin{enumerate}
 \item
\[
 \forall f \in \F \;\;\; D_1 \norm{f - M_\mu f}_{N(\mu)} \leq
\norm{\abs{\nabla f}}_{L_q(\mu)} \;\; ,
\]
\item
\[
 \forall f \in \F \;\;\; D_2 \norm{f - M_\mu f}_{N_0(\mu)} \leq
\norm{\abs{\nabla f}}_{L_q(\mu)} \;\; ,
\]
\end{enumerate}
and the best constants $D_1,D_2$ above satisfy $ \frac{1}{4} D_1 \leq D_2 \leq 4 D_1$.
\end{cor}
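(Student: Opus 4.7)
My plan is to reduce the equivalence of the two Orlicz--Sobolev inequalities to the fact that the adjoint functions $N^{\wedge}$ and $N_0^{\wedge}$ coincide on the relevant range $[0,1/2]$, so that the two inequalities translate via Proposition \ref{prop:Capq-Lq} into the \emph{same} capacity inequality (up to a universal factor of $4$).

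First I would check that $N_0$ is a legitimate element of $\J$ to which Proposition \ref{prop:Capq-Lq} applies. The definition is set up so that at the junction $t = N^{-1}(2)$ both branches equal $2$, so $N_0$ is continuous; it is strictly increasing (the left branch is a power, the right branch is $N$); it satisfies $N_0(0)=0$ and $N_0(t)\to\infty$; and the ratio $N_0(t)^{1/q}/t$ equals the constant $2^{1/q}/N^{-1}(2)$ on $[0,N^{-1}(2)]$, matches $N(t)^{1/q}/t$ (non-decreasing by hypothesis) on $[N^{-1}(2),\infty)$, and agrees at the junction. Hence $N_0(t)^{1/q}/t$ is non-decreasing on $\Real_+$.

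Next I would compute $N_0^\wedge$. For $t \in (0,1/2]$ we have $1/t \geq 2$, so $N_0^{-1}(1/t)$ lies in $[N^{-1}(2),\infty)$, the range on which $N_0 \equiv N$; consequently $N_0^{-1}(1/t) = N^{-1}(1/t)$ and therefore
\[
N_0^\wedge(t) = N^\wedge(t) \qquad \forall t \in [0,1/2].
\]
This is the whole point of the construction: modifying $N$ near $0$ only affects $N^\wedge$ near $1$, which is invisible to the capacity inequality $Cap_q(t,1/2) \geq D\, N^\wedge(t)$ for $t\in[0,1/2]$.

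Finally I would invoke Proposition \ref{prop:Capq-Lq} twice. Let $D_1$ (resp.\ $D_2$) denote the best constant in the $N(\mu)$ (resp.\ $N_0(\mu)$) Orlicz--Sobolev inequality, and let $C$ denote the best constant in the capacity inequality $Cap_q(t,1/2)\geq C N^\wedge(t) = C N_0^\wedge(t)$ on $[0,1/2]$. By Proposition \ref{prop:Capq-Lq}, $D_1 \leq C \leq 4 D_1$ and $D_2 \leq C \leq 4 D_2$. Combining these gives $D_2 \leq C \leq 4 D_1$ and $D_1 \leq C \leq 4 D_2$, i.e.\ $\tfrac14 D_1 \leq D_2 \leq 4 D_1$, as required. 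There is no serious obstacle here: the only thing to be careful about is the verification in the first step that $N_0(t)^{1/q}/t$ remains non-decreasing across the junction $t = N^{-1}(2)$, which follows from the hypothesis on $N$.
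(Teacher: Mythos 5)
Your proof is correct and follows essentially the same route as the paper: both arguments rest on the observations that $N_0(t)^{1/q}/t$ is still non-decreasing and that $N^\wedge = N_0^\wedge$ on $[0,1/2]$, and then shuttle between the two Orlicz--Sobolev inequalities through the common capacity inequality $Cap_q(t,1/2) \geq C\,N^\wedge(t)$ using Proposition \ref{prop:Capq-Lq} (the paper phrases the forward passage via Proposition \ref{prop:Capq-Lq-weak}, which is the same content). The constant bookkeeping $\tfrac14 D_1 \leq D_2 \leq 4 D_1$ is exactly as in the paper.
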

\begin{proof}
Note that $N_0$ still satisfies that $N_0(t)^{1/q}/t$ is non-decreasing and that
$N^\wedge(t) = N_0^\wedge(t)$ on $t \in [0,1/2]$.
Using Proposition \ref{prop:Capq-Lq-weak} to pass from the Orlicz-Sobolev inequality to a capacity inequality,
we can then exchange between $N$ and $N_0$, and use Proposition \ref{prop:Capq-Lq} to pass back to the other
Orlicz-Sobolev inequality.
\end{proof}


\section{The General Theorem} \label{sec:general}

Note that the assumption $q \geq 2$ was needed for the proof of
Theorem \ref{thm:Orlicz-12} in order to use the estimate
(\ref{eq:L_q-bound}), and the convexity of $N$ was needed to employ Lemma \ref{lem:Mazya-duality}.
In order to relax these assumptions, as well as to deduce the direction
$(2) \Rightarrow (1)$ in Theorem \ref{thm:Intro-Orlicz}, we will need some additional observations,
which are most-naturally formulated in the language of capacities.

\subsection{Passing between $q$-capacities}

In the following proposition, the case $q_0=1$ is due to Maz'ya
\cite[p. 105]{MazyaBook}.
Motivated by the method used in our joint work with Sodin in
\cite{EMilmanSodinIsoperimetryForULC}, we provide an independent
proof, which generalizes to the case of an arbitrary metric
probability space and $q_0 > 1$. We denote the conjugate exponent to
$q \in [1,\infty]$ by $q^* = q/(q-1)$.

\begin{prop} \label{prop:increase-Orlicz-q}
Let $1 \leq q_0 \leq q < \infty$ and set $p_0 = q_0^*, p = q^*$. Then for all $0<a<b<1$:
\[
 \frac{1}{Cap_q(a,b)} \leq \gamma_{p,p_0} \brac{\int_a^b
\frac{ds}{(s-a)^{p/p_0} Cap_{q_0}^p(s,b)}}^{1/p} ~,
\]
where:
\begin{equation} \label{eq:gamma}
\gamma_{p,p_0} :=  \frac{(\frac{p_0}{p} - 1)^{1/p_0}}{(1-\frac{p}{p_0})^{1/p}} ~.
\end{equation}
\end{prop}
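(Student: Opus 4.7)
The plan is to bound $Cap_q(a,b)$ from below for an arbitrary admissible $\Phi \in \F$ by decomposing $\Phi$ into bands in the $s$-variable and exploiting the $q_0$-capacity inequality on each band. First, by an approximation argument in the spirit of Remark~\ref{rem:cap-app0}, I will reduce to the case where $F(t) := \mu\{\Phi \ge t\}$ is continuous and strictly decreasing with $F(0^+) = b$ and $F(1) = a$, and set $T := F^{-1}:[a,b]\to[0,1]$. For a partition $a = s_0 < s_1 < \cdots < s_n = b$, form the bands $B_i := \{T(s_i) < \Phi < T(s_{i-1})\}$ of $\mu$-measure $\Delta s_i := s_i - s_{i-1}$, and set $\tau_i := T(s_{i-1}) - T(s_i) > 0$. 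The band-wise truncation
\[
 \Phi_i := \frac{1}{\tau_i}\bigl((\Phi - T(s_i))_+ \wedge \tau_i\bigr)
\]
satisfies $\mu\{\Phi_i = 1\} = s_{i-1}$ and $\mu\{\Phi_i = 0\} = 1 - s_i$, hence is admissible for $Cap_{q_0}(s_{i-1},s_i) \ge Cap_{q_0}(s_{i-1},b) =: k_*(s_{i-1})$, where the inequality uses the monotonicity of $Cap_{q_0}$ in the second argument ($s_i \le b$). This yields the band estimate
\[
 \int_{B_i} \abs{\nabla \Phi}^{q_0}\, d\mu \,=\, \tau_i^{q_0}\, \snorm{\nabla \Phi_i}_{L_{q_0}(\mu)}^{q_0} \,\ge\, \tau_i^{q_0}\, k_*^{q_0}(s_{i-1}).
\]

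Next, since $q \ge q_0$, applying H\"older's inequality with exponents $q/q_0$ and $q/(q-q_0)$ on each band gives (with $m := q/q_0$)
\[
 \int_{B_i} \abs{\nabla \Phi}^q\, d\mu \,\ge\, \frac{\bigl(\int_{B_i} \abs{\nabla \Phi}^{q_0} d\mu\bigr)^{q/q_0}}{\mu(B_i)^{m-1}} \,\ge\, \frac{\tau_i^q\, k_*^q(s_{i-1})}{\Delta s_i^{m-1}}.
\]
Summing in $i$ and using the telescoping identity $\sum_i \tau_i = T(a) - T(b) = 1$, a single application of the discrete H\"older inequality in the conjugate exponents $(q,p)$ with the factorization $\tau_i = \bigl(\tau_i k_*(s_{i-1})/\Delta s_i^{(m-1)/q}\bigr) \cdot \bigl(\Delta s_i^{(m-1)/q}/k_*(s_{i-1})\bigr)$ yields
\[
 1 \,\le\, \Bigl(\sum_i \frac{\tau_i^q\, k_*^q(s_{i-1})}{\Delta s_i^{m-1}}\Bigr)^{1/q} \Bigl(\sum_i \frac{\Delta s_i^{\beta}}{k_*^p(s_{i-1})}\Bigr)^{1/p}, \qquad \beta := \frac{p(m-1)}{q} = 1 - \frac{p}{p_0} \in (0,1].
\]
Combining, $\int_\Omega \abs{\nabla \Phi}^q\, d\mu \ge \Sigma^{-q/p}$ with $\Sigma := \sum_i \Delta s_i^{\beta}/k_*^p(s_{i-1})$. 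The proof concludes by choosing the geometric partition $s_i - a = (b-a)\mu^{n-i}$ and passing to $n \to \infty$; a direct calculation using the monotonicity of $k_*$ and the scaling $\Delta s_i = (b-a)\mu^{n-i}(1-\mu)$ bounds $\Sigma \le C(\mu,p,p_0) \cdot I$, where $I$ denotes the integral on the right-hand side of the statement. Taking the infimum over admissible $\Phi$ produces the claimed capacity inequality with constant $C(\mu,p,p_0)^{1/p}$.

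The main technical obstacle is the sharp constant-tracking in this last Riemann-type comparison: while the partition approach naturally produces an $\mu$-dependent constant, recovering the precise value $\gamma_{p,p_0} = (p_0/p - 1)^{1/p_0}/(1 - p/p_0)^{1/p}$ requires a careful calculus-of-one-variable optimization over $\mu \in (0,1)$ (and, on the continuous side, an appropriate passage to the limit). All other steps---the reduction to continuous $F$, the band admissibility, the use of $Cap_{q_0}(s_{i-1},s_i) \ge Cap_{q_0}(s_{i-1},b)$, and the local-to-global H\"older assembly---are routine, so the central point is balancing the two H\"older inequalities against the weight $(s-a)^{-p/p_0}$ built into $I$.
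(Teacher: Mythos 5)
Your proposal is essentially the paper's own argument: the same band decomposition with band measures decaying geometrically toward $a$, the same band-wise H\"older inequality passing from $L_q$ to $L_{q_0}$ via the truncations $\Phi_i$ and monotonicity of $Cap_{q_0}(\cdot,b)$, the same second discrete H\"older against $\sum_i \tau_i = 1$, and the same Riemann-type comparison with the weighted integral, with your optimal ratio $\lambda=\beta=1-p/p_0$ coinciding with the paper's choice $\alpha=1-p/p_0$ and indeed reproducing exactly $\gamma_{p,p_0}$ (so the constant-tracking you flag does go through). The one caveat is your opening reduction to a continuous, strictly decreasing distribution function with $F(1)=a$ and $F(0^+)=b$: Remark \ref{rem:cap-app0} only ensures that the gradient carries no mass on atom levels and does not remove atoms of $\Phi$ nor normalize the endpoint masses, and in a general metric probability space such atoms cannot simply be approximated away; the paper instead carries the atom set $\Gamma$ through the band measures and handles $\mu\set{\Phi=1}>a$ at the end by a shift together with the monotonicity of $Cap_{q_0}(\cdot,b)$ in its first argument --- routine adjustments that fit your scheme without changing it.
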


\begin{proof}
Let $0<a<b<1$ be given, and let $\Phi : \Omega \rightarrow [0,1]$ be
a function in $\F$ such that $a':=\mu\set{\Phi=1} \geq a$ and
$1-b':=\mu\set{\Phi=0} \geq 1-b$. As usual (see Remark
\ref{rem:cap-app0}), by approximating $\Phi$, we may assume that
$\int_{\set{\Phi=t}} \abs{\nabla \Phi}^q d\mu = 0$ for all $t \in
(0,1)$. Let $C := \set{ t \in (0,1) ; \mu\set{\Phi = t} > 0 }$
denote the discrete set of atoms of $\Phi$ under $\mu$, set $\Gamma
:= \set{ f \in C }$ and denote $\gamma = \mu(\Gamma)$.

We now choose $t_0=0 < t_1 < t_2 < \ldots < 1$, so that denoting for
$i \geq 1$, $\Omega_i = \set{t_{i-1} \leq \Phi \leq t_i}$, and
setting $m_i = \mu(\Omega_i \setminus \Gamma)$, we have $m_i =
(b'-a'-\gamma) \alpha^{i-1}(1-\alpha)$, where $0 \leq \alpha \leq 1$
will be chosen later. Denote in addition $\Phi_i =
\brac{\frac{\Phi-t_{i-1}}{t_i - t_{i-1}} \vee 0} \wedge 1$,
$N_i = \sum_{j>i} m_j$. Applying H\"{o}lder's
inequality twice, we estimate:
\begin{eqnarray*}
\brac{\int_\Omega \abs{\nabla \Phi}^q d\mu}^{1/q} &=&
\brac{\sum_{i=1}^\infty \int_{\Omega_i \setminus \Gamma} \abs{\nabla
\Phi}^q d\mu}^{1/q} \geq \brac{\sum_{i=1}^\infty
m_i^{1-\frac{q}{q_0}} \brac{\int_{\Omega_i \setminus \Gamma}
\abs{\nabla \Phi}^{q_0}
d\mu}^{q/q_0}}^{1/q} \\
&\geq& \brac{\sum_{i=1}^\infty m_i^{1-\frac{q}{q_0}} (t_i-t_{i-1})^q
\brac{\int_{\Omega} \abs{\nabla \Phi_i}^{q_0} d\mu}^{q/q_0}}^{1/q} \\
&\geq& \brac{\sum_{i=1}^\infty m_i^{1-\frac{q}{q_0}}
(t_i-t_{i-1})^q Cap_{q_0}^q(\mu\set{\Phi_i = 1},1-\mu\set{\Phi_i = 0})}^{1/q} \\
&\geq& \sum_{i=1}^\infty (t_i-t_{i-1}) \brac{\sum_{i=1}^\infty
\frac{m_i^{1-p/p_0}}{Cap_{q_0}^p(\mu\set{\Phi \geq t_i},b)}}^{-1/p}
~.
\end{eqnarray*}
Since $\mu\set{\Phi \geq t_i} \geq a' + N_i$ and $Cap_{q_0}(s,b)$ is
non-decreasing in $s$, we continue to estimate as follows:
\begin{eqnarray*}
& & \brac{\frac{1}{\int_\Omega \abs{\nabla \Phi}^q d\mu}}^{p/q} \leq \sum_{i=1}^\infty
\frac{m_i^{1-p/p_0}}{Cap_{q_0}^p(\mu\set{\Phi \geq t_i},b)} \\
&\leq & \sum_{i=1}^\infty \frac{m_i^{1-p/p_0}}{m_{i+1}}
\int_{a'+N_{i+1}}^{a'+N_i} \frac{ds}{Cap_{q_0}^p(s,b)} \leq
\sum_{i=1}^\infty \frac{1}{\alpha m_i^{p/p_0}}
\int_{a'+N_{i+1}}^{a'+N_i} 
\frac{ds}{Cap_{q_0}^p(s,b)} \\
&\leq & \frac{1}{\alpha} \brac{\frac{\alpha}{1-\alpha}}^{p/p_0}  \sum_{i=1}^\infty
\int_{a'+N_{i+1}}^{a'+N_i} 
\frac{ds}{(s-a')^{p/p_0} Cap_{q_0}^p(s,b)} \\
&\leq& \frac{1}{\alpha} \brac{\frac{\alpha}{1-\alpha}}^{p/p_0}  \int_a^b \frac{ds}{(s-a)^{p/p_0}
Cap_{q_0}^p(s,b)} ~,
\end{eqnarray*}
where we have used that $m_{i+1} = \alpha m_i$, $m_i =
\frac{1-\alpha}{\alpha} N_i$, and in the last inequality the fact
that $Cap_{q_0}(s,b)$ is non-decreasing in $s$. The assertion now
follows by taking supremum on all $\Phi$ as above, and choosing the
optimal $\alpha = 1 - p/p_0$.
\end{proof}

\begin{lem} \label{lem:ugly-becomes-nice}
Let $1 \leq p \leq p_0 \leq \infty$, and let $N \in \J$ so that $N(t)^{1/\alpha} / t$ is
non-decreasing for some $\alpha>0$ (in particular this holds with $\alpha=1$ when $N$ is a Young
function). Then for any $t>0$:
\[
\brac{\int_t^\infty \frac{ds}{(s-t)^{p/p_0} N^\wedge(s)^p}}^{1/p} \leq \delta_{p,p_0,\alpha}
\brac{\int_t^\infty \frac{ds}{s^{p/p_0} N^\wedge(s)^p}}^{1/p} ~,
\]
where:
\begin{equation} \label{eq:delta}
\delta_{p,p_0,\alpha} \leq c 2^{1/\alpha} (1 - p/p_0)^{-1/p} ~,
\end{equation}
and $c>0$ is a universal constant.
\end{lem}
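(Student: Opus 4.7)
The plan is to split the integral at $s = 2t$ and estimate the two pieces separately, since the factor $(s-t)^{-p/p_0}$ is only troublesome near $s = t$.

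On the tail $[2t, \infty)$, the elementary inequality $s - t \geq s/2$ yields $(s-t)^{-p/p_0} \leq 2^{p/p_0} s^{-p/p_0} \leq 2 s^{-p/p_0}$ (using $p/p_0 \leq 1$), so this portion contributes at most $2 \int_t^\infty s^{-p/p_0} N^\wedge(s)^{-p} ds$, which already has the desired form.

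On the near-singular portion $[t, 2t]$, I first pull out $N^\wedge(t)^{-p}$ using that $N^\wedge$ is non-decreasing (clear since $N \in \J$ is an increasing bijection onto $\Real_+$), and then explicitly compute $\int_t^{2t} (s-t)^{-p/p_0} ds = t^{1-p/p_0}/(1 - p/p_0)$, obtaining the upper bound $\frac{1}{1-p/p_0} \cdot \frac{t^{1-p/p_0}}{N^\wedge(t)^p}$. The key step is then to dominate this expression by the right-hand side of the lemma, and here the hypothesis on $N$ enters via the preceding lemma: since $N(t)^{1/\alpha}/t$ is non-decreasing, $N^\wedge(s)/s^{1/\alpha}$ is non-increasing, hence $N^\wedge(s) \leq (s/t)^{1/\alpha} N^\wedge(t)$ for $s \geq t$. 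Substituting into the right-hand integral restricted to $[t, 2t]$ and using $s \leq 2t$ gives
\[
\int_t^\infty \frac{ds}{s^{p/p_0} N^\wedge(s)^p} \;\geq\; \int_t^{2t} \frac{ds}{(2t)^{p/p_0} (s/t)^{p/\alpha} N^\wedge(t)^p} \;\geq\; \frac{t^{1-p/p_0}}{2^{p/p_0 + p/\alpha} N^\wedge(t)^p},
\]
which is exactly the reverse estimate we need.

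Combining both pieces and taking $p$-th roots yields a constant of the form $\brac{2 + 2^{p/p_0 + p/\alpha}/(1 - p/p_0)}^{1/p}$; using $p/p_0 \leq 1$, $p \geq 1$, and $(2^{p/\alpha})^{1/p} = 2^{1/\alpha}$, this simplifies to $\delta_{p,p_0,\alpha} \leq c\, 2^{1/\alpha} (1 - p/p_0)^{-1/p}$ for a universal $c$, as claimed. I do not anticipate any real obstacle here — it is a clean split-and-compare argument, and the only subtle point worth remarking on is that the growth bound on $N^\wedge$ supplied by the preceding lemma is exactly what prevents the right-hand integral from collapsing on the short interval $[t, 2t]$.
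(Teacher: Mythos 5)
Your proof is correct and follows essentially the same route as the paper: split at $s=2t$, bound the tail via $s-t\geq s/2$, and on $[t,2t]$ compare the explicit integral $t^{1-p/p_0}/(1-p/p_0)$ against the right-hand integral using the doubling estimate $N^\wedge(s)\leq (s/t)^{1/\alpha}N^\wedge(t)$ coming from the monotonicity hypothesis. The only difference is trivial bookkeeping (the paper integrates $s^{-p/p_0}$ exactly on $[t,2t]$, producing the factor $2^{1-p/p_0}-1$, while you bound the integrand pointwise), and both yield the stated constant.
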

\begin{proof}
Let us evaluate the integral on $[t,2t]$ and $[2t,\infty)$
separately:
\[
 \int_{2t}^\infty \frac{ds}{(s-t)^{p/p_0} N^\wedge(s)^p} \leq  2^{p/p_0} \int_{2t}^\infty
\frac{ds}{s^{p/p_0} N^\wedge(s)^p} ~.
\]
On the other hand, since $N^\wedge(t^\alpha)/t$ is non-increasing:
\begin{eqnarray*}
 \int_{t}^{2t} \frac{ds}{(s-t)^{p/p_0} N^\wedge(s)^p} &\leq& \frac{t^{1-p/p_0}}{(1-p/p_0)
N^\wedge(t)^p} \leq \frac{t^{1-p/p_0} 2^{p/\alpha} }{(1-p/p_0) N^\wedge(2t)^p} \\
&\leq& \frac{2^{p/\alpha}}{2^{1-p/p_0}-1} \int_t^{2t} \frac{ds}{s^{p/p_0} N^\wedge(s)^p} ~.
\end{eqnarray*}
Summing these two expressions, the assertion follows.
\end{proof}

\begin{rem} \label{rem:gamma-delta}
We do not optimize on the dependence on $\alpha$ here, since in our applications $\alpha \geq 1$. In
this case, note that $\gamma_{p,p_0}$ in (\ref{eq:gamma}) and $\delta_{p,p_0,\alpha}$ in
(\ref{eq:delta}) conveniently satisfy:
\[
\gamma_{p,p_0} \delta_{p,p_0,\alpha} \leq C ~,
\]
where $C>0$ is some universal constant. This will be used in the proof of Theorem
\ref{thm:Orlicz-12-strong} below.
\end{rem}

\begin{lem} \label{lem:big-lemma}
Let $p_1,p_2,p_3 \in [1,\infty]$, and let $N_1 \in \J$ satisfy:
\[
\frac{N_1(t)^{1/p_3 + 1/p_2 - 1/p_1}}{t} \text{ is non-decreasing}
\]
and:
\begin{equation} \label{eq:integrability}
\int_1^\infty \frac{ds}{s^{p_2/p_1} N_1^\wedge(s)^{p_2}} < \infty ~,~ \int_0^1 \frac{ds}{s^{p_2/p_1}
N_1^\wedge(s)^{p_2}} = \infty ~.
\end{equation}
Let $N_2 : \Real_+ \rightarrow \Real_+$ be the function so that:
\begin{equation} \label{eq:def-N2}
 N_2^\wedge(t) := \frac{1}{\brac{\int_t^{\infty} \frac{ds}{s^{p_2/p_1}
N_1^\wedge(s)^{p_2}}}^{1/p_2}}
\end{equation}
Then:
\begin{enumerate}
\item
$N_2(t)^{1/p_3}/t$ is non-decreasing.
\item
If $p_2 \leq p_3$ then $N_2$ is a convex (hence Young) function.
\end{enumerate}
\end{lem}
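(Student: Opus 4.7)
The plan is to work through the involution $N \leftrightarrow N^\wedge$, using the preceding lemma together with the auxiliary function
\[
U(t) := N_2^\wedge(t)^{-p_2} = \int_t^\infty h(s)\,ds, \qquad h(s) := s^{-p_2/p_1} N_1^\wedge(s)^{-p_2}.
\]
Setting $\beta := 1/p_3 + 1/p_2 - 1/p_1$, the hypothesis on $N_1$ translates (via the preceding lemma applied to $M(t) := N_1(t)^\beta$) into two equivalent dual monotonicities: (i) $N_1^\wedge(s)/s^\beta$ is non-increasing in $s$, and (ii) $N_1^{-1}(u)/u^\beta$ is non-increasing in $u$. Throughout, the exponent identity $\beta p_2 = p_2/p_3 + 1 - p_2/p_1$ will make the algebra collapse.

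For (1), the claim ``$N_2(t)^{1/p_3}/t$ non-decreasing'' is equivalent, by another application of the preceding lemma, to ``$H(s) := s\,U(s)^{p_3/p_2}$ non-decreasing''. Since $U'(s)=-h(s)$ a.e., a short computation gives $H'(s) = U(s)^{p_3/p_2 - 1}\bigl[U(s) - (p_3/p_2)\,s\,h(s)\bigr]$, reducing the claim to the pointwise bound
\[
U(s) \geq \frac{p_3}{p_2}\,\frac{s^{1-p_2/p_1}}{N_1^\wedge(s)^{p_2}}.
\]
I obtain this by plugging the upper bound $N_1^\wedge(r) \leq N_1^\wedge(s)(r/s)^\beta$ from (i) into the integrand of $U(s)$; the resulting power integral $\int_s^\infty r^{-(1+p_2/p_3)}\,dr$ evaluates to exactly the constant $p_3/p_2$ times $s^{-p_2/p_3}/N_1^\wedge(s)^{p_2}$, yielding the required inequality.

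For (2), rather than differentiating $N_2$ directly, I first perform the substitution $r = 1/u$ in $U(1/s) = N_2^{-1}(s)^{p_2}$, using $N_1^\wedge(1/u) = 1/N_1^{-1}(u)$, to derive the clean representation
\[
K(s)^{p_2} := N_2^{-1}(s)^{p_2} = \int_0^s \phi(u)\,du, \qquad \phi(u) := u^{p_2/p_1 - 2}\,N_1^{-1}(u)^{p_2}.
\]
Differentiating yields $K'(s) = \tfrac{1}{p_2}\,K(s)^{1-p_2}\,\phi(s)$. Since $K$ is non-decreasing and $p_2 \geq 1$, the factor $K^{1-p_2}$ is non-increasing, so it suffices to show that $\phi$ is non-increasing when $p_2 \leq p_3$. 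Using (ii) in the form $N_1^{-1}(u_1)/N_1^{-1}(u_2) \geq (u_1/u_2)^\beta$ for $u_1 \leq u_2$, I get
\[
\phi(u_1)/\phi(u_2) \geq (u_1/u_2)^{(p_2/p_1 - 2) + \beta p_2} = (u_1/u_2)^{p_2/p_3 - 1} \geq 1,
\]
the last inequality using $u_1/u_2 \leq 1$ together with the sign $p_2/p_3 - 1 \leq 0$. Hence $K'$ is a product of two non-negative non-increasing functions, $K = N_2^{-1}$ is concave, so $N_2$ is convex (and since $N_2 \in \J$ follows from a direct check using \eqref{eq:integrability}, it is Young).

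The main subtlety is locating the correct representation in (2): directly differentiating $N_2$ or $1/N_2^{-1}(1/t)$ is unpleasant, but the substitution $r = 1/u$ converts the improper right-endpoint integral defining $N_2^\wedge$ into $\int_0^s \phi(u)\,du$, exposing an integrand whose monotonicity is controlled by a single sharp sign condition. A technical comment: all arguments rely only on ratios and integrals of continuous functions, so $N_1$ is never differentiated; the integrability hypothesis \eqref{eq:integrability} ensures $U$ and $K$ are strictly positive bijections of $(0,\infty)$, so the Legendre-type identities being invoked are all well-defined.
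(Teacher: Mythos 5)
Your proof is correct, and while it follows the same overall architecture as the paper's (dualize through $N \mapsto N^\wedge$, reduce part (1) to the monotonicity of an explicit integral expression, and reduce part (2) via the substitution $r=1/u$ to the representation $N_2^{-1}(s)^{p_2}=\int_0^s u^{p_2/p_1-2}N_1^{-1}(u)^{p_2}\,du$, which is exactly the paper's formula), the way you verify the two monotonicity claims is genuinely different and in both cases more elementary. For part (1) the paper writes the relevant quantity as $G(t)$, shows $\limsup_{t\to\infty}G(t)=0$ using the integrability hypothesis, and proves $G$ is non-increasing by differentiating once more, which requires the a.e.\ derivative of $N_1^\wedge$; you instead prove the needed pointwise bound $U(s)\ge \frac{p_3}{p_2}\,s^{1-p_2/p_1}N_1^\wedge(s)^{-p_2}$ directly, by inserting the decay estimate $N_1^\wedge(r)\le N_1^\wedge(s)(r/s)^{\beta}$ (valid since the hypothesis forces $\beta=1/p_3+1/p_2-1/p_1>0$) into the integral and computing $\int_s^\infty r^{-1-p_2/p_3}dr=\frac{p_3}{p_2}s^{-p_2/p_3}$ — no differentiation of $N_1^\wedge$ and an exact constant. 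For part (2) the paper differentiates the representation twice and checks a sign condition on $N_2''$ (again via $N_1'$ a.e.), whereas you differentiate only once, writing $K'=\frac1{p_2}K^{1-p_2}\phi$ with $K=N_2^{-1}$, and deduce concavity of $K$ from the fact that both factors are non-negative and non-increasing, the monotonicity of $\phi$ coming purely from the ratio estimate $N_1^{-1}(u_1)/N_1^{-1}(u_2)\ge (u_1/u_2)^{\beta}$ and the exponent identity $p_2/p_1-2+\beta p_2=p_2/p_3-1\le 0$ when $p_2\le p_3$. This buys robustness: you never differentiate $N_1$ or its inverse, and you avoid the (slightly delicate) step of passing from a.e.\ non-negativity of a second derivative to convexity; the paper's route is more computational but uniform in style with its other arguments. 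Two small housekeeping points, shared with the paper: part (1) implicitly takes $p_3<\infty$ (the statement degenerates otherwise, and all applications have $p_3$ finite), and $p_2<\infty$ in part (2) (the paper handles $p_2=\infty$ by a limiting remark); also your phrase about what the power integral ``evaluates to'' conflates the integral with the full lower bound, but the displayed inequality you state is the correct one.
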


\begin{proof}
Note that since $N_1 \in \J$, it is almost everywhere differentiable. Also note
that our integrability conditions (\ref{eq:integrability}) together with $N_1 \in \J$
ensure that $N_2 \in \J$. We will assume that $p_2 < \infty$,
the case $p_2 = \infty$ follows by taking limit.

For the first part, it is equivalent to show that $N_2^\wedge(t^{p_3})/t$ is
non-increasing, which in turn is equivalent to checking that $F(t)$, defined below, is
non-decreasing:
\[
F(t):= \int_t^{\infty} \frac{t^{p_2/p_3} ds}{s^{{p_2}/{p_1}} N_1^\wedge(s)^{p_2} } ~.
\]
Indeed:
\[
 G(t) := \frac{F'(t)}{t^{p_2/p_3-1}} = \frac{p_2}{p_3} \int_t^{\infty}
\frac{ds}{s^{{p_2}/{p_1}} N_1^\wedge(s)^{p_2}} - \frac{t}{t^{{p_2}/{p_1}}
N_1^\wedge(t)^{p_2} } ~,
\]
and the integrability condition (\ref{eq:integrability}) ensures that $\limsup_{t \rightarrow
\infty} G(t) = 0$. We will show that $G(t)$ is non-increasing, from which it will follow that
$G(t) \geq 0$, hence $F'(t) \geq 0$, as claimed. Indeed, for almost all $t>0$:
\begin{eqnarray*}
 G'(t) &=& -\frac{p_2/p_3}{t^{{p_2}/{p_1}} N_1^\wedge(t)^{p_2}} - \frac{(1-{p_2}/{p_1})
t^{-{p_2}/{p_1}} N_1^\wedge(t)^{p_2} -
t^{1-{p_2}/{p_1}} p_2 N_1^\wedge(t)^{p_2-1} (N_1^\wedge)'(t)}{N_1^\wedge(t)^{2p_2}} \\
&=& \frac{p_2}{t^{p_2/p_1} N_1^\wedge(t)^{p_2+1}} (t (N_1^\wedge)'(t) - (1/p_3 + 1/p_2 - 1/p_1)
N_1^\wedge(t)) ~.
\end{eqnarray*}
The last expression is indeed non-positive, since $N_1(t)^{1/p_3 + 1/p_2 - 1/p_1}/t$ is
non-decreasing, hence $N_1^\wedge(t)^{1/(1/p_3 + 1/p_2 - 1/p_1)}/t$ is non-increasing, and
by differentiating the latter expression one verifies that $(N_1^\wedge)'(t) \leq (1/p_3 + 1/p_2 -
1/p_1) N(t)/t$.

For the second part, let us substitute the definitions of $N_1^\wedge,N_2^\wedge$ in
(\ref{eq:def-N2}) and perform the change of variables $z = 1/s$. This amounts to:
\[
 N_2^{-1}(t)^{p_2} = \int_0^t \frac{N_1^{-1}(z)^{p_2}}{z^{2-p_2/p_1}} dz ~.
\]
Taking the derivative, we obtain that for almost every $t>0$:
\[
 \frac{p_2 N_2^{-1}(t)^{p_2-1}}{N_2'(N_2^{-1}(t))} = \frac{N_1^{-1}(t)^{p_2}}{t^{2-p_2/p_1}} ~.
\]
Multiplying by the denominator on the left hand side and taking the derivative once again yields
that for almost every $t>0$:
\[
 \frac{p_2(p_2-1) N_2^{-1}(t)^{p_2-2}}{N_2'(N_2^{-1}(t))} =  T(t) N_2'(N_2^{-1}(t)) +
\frac{N_1^{-1}(t)^{p_2}}{t^{2-{p_2}/{p_1}}} \frac{N_2''(N_2^{-1}(t))}{N_2'(N_2^{-1}(t))}
\]
with:
\[
T(t) = \frac{p_2 N_1^{-1}(t)^{p_2-1}}{N_1'(N_1^{-1}( t)) t^{2-{p_2}/{p_1}}} -
(2-{p_2}/{p_1})\frac{N_1^{-1}(t)^{p_2}}{t^{3-{p_2}/{p_1}}}
\]
In particular, $N_2$ is twice differentiable for almost every $t>0$, and it is clear that
$N_2'' \geq 0$ if $T \leq 0$ almost everywhere. The latter amounts to checking that for almost all
$z>0$:
\[
(2/p_2 - 1/p_1) N_1'(z) \geq N_1(z)/z ~.
\]
When $p_2 \leq p_3$, this follows from the stronger statement:
\[
(1/p_3 + 1/p_2 - 1/p_1) N_1'(z) \geq N_1(z)/z ~,
\]
which indeed holds for almost all $z>0$, as verified by differentiating
$\frac{N_1(z)^{1/p_3 + 1/p_2 - 1/p_1}}{z}$, which by assumption is non-decreasing.

\end{proof}

\subsection{Orlicz-Sobolev implies Isoperimetry for $q \geq 1$}

We can now prove the following extension of Theorem \ref{thm:Orlicz-12}:

\begin{thm} \label{thm:Orlicz-12-strong}
The assumption that $q\geq 2$ in Theorem \ref{thm:Orlicz-12} can be relaxed to $q \geq 1$, and the
assumption that $N$ is a Young function omitted, if we assume in addition that:
\[
\frac{N(t)^{1/q}}{t} \text{ is non-decreasing} ~.
\]
In this case, under our convexity assumptions, (\ref{eq:Orlicz-inq}) implies
(\ref{eq:Orlicz-12-conclusion}) with:
\begin{equation} \label{eq:CNq}
 C_{N,q} \geq c \inf_{0<t<1/2} \frac{t^{1/r - 1/p}}{\brac{\int_t^{\infty}
\frac{N^\wedge(t)^r ds}{s^{r/p} N^\wedge(s)^r  }}^{1/r}} ~,
\end{equation}
where $c>0$ is a universal constant, and:
\begin{equation} \label{eq:r-p}
p = \begin{cases} q^* & q < 2 \\ q  & q \geq 2 \end{cases} \quad r = \begin{cases} 2 & q < 2 \\ q  &
q \geq 2 \end{cases} ~.
\end{equation}
\end{thm}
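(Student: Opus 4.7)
The strategy is to reduce to Theorem \ref{thm:Orlicz-12} applied to an auxiliary Young function $N_2$ constructed via Lemma \ref{lem:big-lemma}, using the capacity framework of Section \ref{sec:capacities} as the bridge.

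Since $N(t)^{1/q}/t$ is non-decreasing, Proposition \ref{prop:Capq-Lq} converts (\ref{eq:Orlicz-inq}) into $Cap_q(t,1/2) \geq (D/4) N^\wedge(t)$ for $t \in [0,1/2]$. For the case $1 \leq q < 2$ we apply Proposition \ref{prop:increase-Orlicz-q} with $q_0 = q$ (so $p_0 = q^*$, $p = 2$), trivially extend the resulting integral to $[a,\infty)$, and invoke Lemma \ref{lem:ugly-becomes-nice} with $\alpha = q$ to absorb the $(s-a)^{2/q^*}$ singularity; Remark \ref{rem:gamma-delta} bounds the combined loss $\gamma_{2,q^*}\delta_{2,q^*,q}$ by a universal constant. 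This yields
\[
Cap_2(a,1/2) \geq c D \cdot N_2^\wedge(a), \qquad N_2^\wedge(a) := 1/\Bigl(\int_a^\infty \tfrac{ds}{s^{2/q^*} N^\wedge(s)^2}\Bigr)^{1/2}.
\]
For $q \geq 2$, Proposition \ref{prop:increase-Orlicz-q} is unhelpful (since $\gamma_{q^*,q^*} = \infty$); instead we set
\[
N_2^\wedge(t) := 1/\Bigl(\int_t^\infty \tfrac{ds}{s\, N^\wedge(s)^q}\Bigr)^{1/q},
\]
and observe that our hypothesis forces $N^\wedge(s)^q/s$ to be non-increasing, so $\int_t^\infty \tfrac{ds}{s N^\wedge(s)^q} \geq 1/N^\wedge(t)^q$, giving $N_2 \leq N$ pointwise. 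Hence $\norm{f - M_\mu f}_{N_2(\mu)} \leq \norm{f - M_\mu f}_{N(\mu)}$ and the $(N_2,q)$ Orlicz-Sobolev inequality holds with the same constant $D$.

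In either case, Lemma \ref{lem:big-lemma} applies with its third parameter $p_3$ determined by the constraint $1/p_3 + 1/p_2 - 1/p_1 = 1/q$ arising from our hypothesis on $N$: this gives $p_3 = q$ when $p_1 = p_2 = q$ (case $q \geq 2$) and $p_3 = 2$ when $p_1 = q^*, p_2 = 2$ (case $q < 2$, using $1/q + 1/q^* = 1$). In both instances $p_3 = p_2$, so part (2) of the Lemma guarantees that $N_2$ is a Young function. In the case $q < 2$, we convert the capacity bound back to an $(N_2, 2)$ Orlicz-Sobolev inequality via Proposition \ref{prop:Capq-Lq} (reverse direction, legitimate by Lemma \ref{lem:big-lemma}(1)). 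Applying Theorem \ref{thm:Orlicz-12} to $(N_2, p_3)$ then gives $\tilde I(t) \geq c D t^{1-1/p_3} N_2^\wedge(t)$. Rewriting the right-hand side as $c D t^{1-1/q} N^\wedge(t) \cdot t^{1/q - 1/p_3} \cdot (N_2^\wedge(t)/N^\wedge(t))$ and noting that $1/q - 1/p_3 = 1/r - 1/p$ for $p, r$ as in (\ref{eq:r-p}) produces the asserted bound (\ref{eq:CNq}).

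The main obstacle is producing a convex $N_2$ that both satisfies an Orlicz-Sobolev inequality derived from the given one and has $N_2^\wedge$ large enough to recover a useful lower bound on $\tilde I$. The construction relies on the fortunate numerology that the natural exponents in both regimes force $p_3 = p_2$, the boundary case of Lemma \ref{lem:big-lemma}(2) which yields convexity; this is also what dictates the two-case split and the definition of $(p,r)$ in (\ref{eq:r-p}).
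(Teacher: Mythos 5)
Your proposal is correct and, in the main case $q<2$, follows the paper's proof step for step: pass to a $q$-capacity inequality, apply Proposition \ref{prop:increase-Orlicz-q} with $q_0=q$ and target exponent $2$, absorb the $(s-a)^{2/q^*}$ factor via Lemma \ref{lem:ugly-becomes-nice} with $\alpha=q$ and Remark \ref{rem:gamma-delta}, use Lemma \ref{lem:big-lemma} with $(p_1,p_2,p_3)=(q^*,2,2)$ to get a Young function $N_2$ with $N_2(t)^{1/2}/t$ non-decreasing, return to an $(N_2,2)$ Orlicz--Sobolev inequality by Proposition \ref{prop:Capq-Lq}, and finish with Theorem \ref{thm:Orlicz-12}; the bookkeeping $1/q-1/p_3=1/r-1/p$ yielding (\ref{eq:CNq}) is exactly the paper's. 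In the branch $q\geq 2$ you deviate mildly: the paper stays on the capacity side, using Remark \ref{rem:bounded-from-above} (with $\alpha=1/q$) to replace $N^\wedge$ by $N_q^\wedge$ in the capacity bound and Proposition \ref{prop:Capq-Lq} to recover an $(N_q,q)$ Orlicz--Sobolev inequality, whereas you note that $N^\wedge(s)^q/s$ non-increasing forces $N_2\leq N$ pointwise, hence $\norm{\cdot}_{N_2(\mu)}\leq \norm{\cdot}_{N(\mu)}$ and the $(N_2,q)$ inequality holds with the same constant; this shortcut is valid, rests on the same comparison, and even avoids the extra universal-constant losses. The one step you should add is the paper's preliminary normalization: Lemma \ref{lem:big-lemma} needs the integrability conditions (\ref{eq:integrability}), in particular convergence of $\int^{\infty} ds/(s^{r/p}N^\wedge(s)^{r})$ so that $N_2^\wedge$ is not identically zero and $N_2\in\J$; the paper secures this by Corollary \ref{cor:N-at-0}, replacing $N$ near $0$ by a multiple of $t^q$ (which alters $N^\wedge$ only on $[1/2,\infty)$ and so does not worsen (\ref{eq:CNq})). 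When these integrability conditions fail the infimum in (\ref{eq:CNq}) vanishes and the conclusion is vacuous, so this is a completeness issue rather than a genuine obstruction, but it should be stated. Minor quibbles: the easy direction of Proposition \ref{prop:Capq-Lq} loses no constant (the factor $4$ is in the converse), and $\gamma_{q^*,q^*}$ is indeterminate rather than infinite; neither affects anything.
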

\begin{rem} \label{rem:bounded-from-above}
Estimating the expression in (\ref{eq:CNq}) is connected to Hardy-type inequalities. We do not
proceed in this direction in this work, since for our applications the bounds are easy to deduce
directly. We remark that whenever $N(t)^\alpha/t$ is non-decreasing for some $\alpha > 0$,
$N^\wedge(t)^{1/\alpha}/t$ is non-increasing, and so:
\[
\frac{N^\wedge(t)}{N^\wedge(s)} \geq \brac{\frac{t}{s}}^{\alpha} \;\;\;\; \forall s \geq t ~.
\]
Using this estimate, it is immediate to show that the expression on the right
hand side of (\ref{eq:CNq}) is bounded from above by a universal constant whenever $1/q \leq \alpha \leq 1$, even if the infimum in (\ref{eq:CNq}) is replaced by a supremum. In particular, this obviously applies to all Young functions $N$ (with $\alpha=1$).
\end{rem}

\begin{proof}[Proof of Theorem \ref{thm:Orlicz-12-strong}]
First, note that whichever the value of $q$, we have:
\[
\frac{r}{p} + \frac{r}{q} = 2 ~.
\]
By Corollary \ref{cor:N-at-0}, we can always assume that $N(t) =
2 (t/N^{-1}(2))^q$ on $t \in [0,N^{-1}(2)]$, so that $N^\wedge(t) =
\frac{2^{1/q}}{N^{-1}(2)} t^{1/q}$ on $t\in[1/2,\infty)$, and therefore:
\begin{equation} \label{eq:integ1}
\int_1^\infty \frac{ds}{s^{r/p} N^\wedge(s)^{r}} < \infty ~.
\end{equation}
Using the assumption that $N(t)^{1/q}/t$ is non-decreasing, hence $N^\wedge(t^q)/t$ is non-increasing, it follows
that:
\begin{equation} \label{eq:integ2}
\int_0^1 \frac{ds}{s^{r/p} N^\wedge(s)^{r}} = \infty ~.
\end{equation}
The assumption (\ref{eq:Orlicz-inq}) implies by Proposition
\ref{prop:Capq-Lq-weak} that:
\begin{equation} \label{eq:cap-q-assumption}
 Cap_q(t,1/2) \geq D N^\wedge(t) \;\;\; \forall t \in [0,1/2] ~.
\end{equation}

We start with the case $q < 2$. Using Proposition \ref{prop:increase-Orlicz-q} (with $q_0=q,q=2$)
to pass from $Cap_q$ to $Cap_2$,
together with Lemma \ref{lem:ugly-becomes-nice} (with $\alpha = q$) and Remark
\ref{rem:gamma-delta}, we obtain that:
\[
 Cap_2(t,1/2) \geq c D N_2^\wedge(t) \;\;\; \forall t \in [0,1/2] ~,
\]
for some universal constant $c>0$, where $N_2$ is a function so that:
\[
 N_2^\wedge(t) := \frac{1}{\brac{\int_t^{\infty} \frac{ds}{s^{2/p} N^\wedge(s)^2}}^{1/2}} .
\]
Since $N(t)^{1/q}/t$ is non-decreasing and the integrability
conditions (\ref{eq:integ1}), (\ref{eq:integ2}) are fulfilled, we
can apply Lemma \ref{lem:big-lemma} with $N_1 = N$, $p_1 = p, p_2 =
2, p_3=2$, and conclude that $N_2$ is a Young function and that
$N_2(t)^{1/2}/t$ is non-decreasing.
Proposition \ref{prop:Capq-Lq} then implies that:
\begin{equation} \label{eq:Orlicz-inq-2}
 \forall f \in \F \;\; \frac{c}{4} D \norm{f - M_\mu f}_{N_2(\mu)} \leq \norm{ \abs{\nabla f}
}_{L_2(\mu)} ~.
\end{equation}
We can now apply Theorem \ref{thm:Orlicz-12}, and conclude that:
\[
 I(t) \geq c' D t^{1/2} N_2^\wedge(t) \;\;\; \forall t \in [0,1/2] ~.
\]
with $c'>0$ a universal constant. The value of $C_{N,q}$ in (\ref{eq:CNq}) ensures that this
implies:
\[
 I(t) \geq C_{N,q} D t^{1-1/q} N^\wedge(t) \;\;\; \forall t \in [0,1/2],
\]
as required. This concludes the proof when $q<2$.

When $q \geq 2$, we use a similar argument. Let $N_q$ denote the function so that:
\[
 N_q^\wedge(t) := \frac{1}{\brac{\int_t^{\infty} \frac{ds}{s N^\wedge(s)^q}}^{1/q}} .
\]
Again, by Lemma \ref{lem:big-lemma} with $N_1 = N$, $p_1 = p_2 = p_3
= q$, we know that $N_q$ is a Young function and that
$N_q(t)^{1/q}/t$ is non-decreasing.
Recalling Remark \ref{rem:bounded-from-above}, the assumption (\ref{eq:cap-q-assumption}) implies that:
\[
 Cap_q(t,1/2) \geq c D N_q^\wedge(t) \;\;\; \forall t \in [0,1/2] ~,
\]
for some universal $c>0$. Proposition \ref{prop:Capq-Lq} then implies that:
\[
 \forall f \in \F \;\; \frac{c}{4} D \norm{f - M_\mu f}_{N_q(\mu)} \leq \norm{ \abs{\nabla f}
}_{L_q(\mu)} ~.
\]
We can now apply Theorem \ref{thm:Orlicz-12}, and using the definition of $C_{N,q}$ in (\ref{eq:CNq}), conclude that:
\[
 I(t) \geq c' D t^{1-1/q} N_q^\wedge(t) \geq c'' C_{N,q} D t^{1-1/q} N^\wedge(t)\;\;\;
\forall t \in [0,1/2] ~,
\]
as required.
\end{proof}

\begin{cor} \label{cor:Orlicz-12-strong-easy}
Let $1 \leq q < \infty$, $N \in \J$, and assume that:
\[
N(t)^{1/q}/t \text{ is non-decreasing}~,~ \exists \alpha > 1/r-1/p \;\; N(t^{\alpha})/t \text{ is
non-increasing },
\]
with $r,p$ as in $(\ref{eq:r-p})$.
Then under our convexity assumptions, the assumption
(\ref{eq:Orlicz-inq}) implies the conclusion
(\ref{eq:Orlicz-12-conclusion}) with:
\begin{equation} \label{eq:CNq-specialized}
C_{N,q} \geq c (\alpha +1/p - 1/r)^{1/r} ~,
\end{equation}
where $c>0$ is a universal constant.
\end{cor}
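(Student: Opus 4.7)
The plan is to derive Corollary \ref{cor:Orlicz-12-strong-easy} as a direct estimate of the infimum appearing in the bound (\ref{eq:CNq}) of Theorem \ref{thm:Orlicz-12-strong}. The hypotheses of Theorem \ref{thm:Orlicz-12-strong} are exactly what the corollary assumes (since $N(t)^{1/q}/t$ non-decreasing is retained), so it suffices to bound the expression
\[
\inf_{0<t<1/2} \frac{t^{1/r - 1/p}}{\brac{\int_t^{\infty} \frac{N^\wedge(t)^r\,ds}{s^{r/p} N^\wedge(s)^r}}^{1/r}}
\]
from below by a universal constant times $(\alpha + 1/p - 1/r)^{1/r}$.

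The key observation is to translate the second hypothesis on $N$ into a pointwise comparison for $N^\wedge$. By the Lemma preceding Remark \ref{rem:J-wedge}, the assumption that $N(t^\alpha)/t$ is non-increasing is equivalent to $N^\wedge(t)^{1/\alpha}/t$ being non-decreasing. Therefore, for every $s \geq t > 0$ one has
\[
\frac{N^\wedge(s)}{N^\wedge(t)} \geq \brac{\frac{s}{t}}^{\alpha} ~,\quad \text{hence} \quad \frac{N^\wedge(t)^r}{N^\wedge(s)^r} \leq \brac{\frac{t}{s}}^{\alpha r} ~.
\]

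I then substitute this bound into the integral. Since $\alpha r + r/p > 1$ (this is exactly the assumption $\alpha > 1/r - 1/p$), the integral converges and a direct computation gives
\[
\int_t^{\infty} \frac{N^\wedge(t)^r\,ds}{s^{r/p} N^\wedge(s)^r} \leq \int_t^{\infty} \frac{t^{\alpha r}}{s^{\alpha r + r/p}}\, ds = \frac{t^{\,1 - r/p}}{r(\alpha + 1/p - 1/r)} ~.
\]
Taking the $r$-th root and dividing $t^{1/r-1/p}$ by this quantity yields
\[
\frac{t^{1/r-1/p}}{\brac{\int_t^{\infty} \frac{N^\wedge(t)^r\,ds}{s^{r/p} N^\wedge(s)^r}}^{1/r}} \geq \bigl(r(\alpha + 1/p - 1/r)\bigr)^{1/r} \geq (\alpha + 1/p - 1/r)^{1/r} ~,
\]
uniformly in $t \in (0,1/2)$; hence the infimum enjoys the same lower bound. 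Combining with (\ref{eq:CNq}) from Theorem \ref{thm:Orlicz-12-strong} gives $C_{N,q} \geq c (\alpha + 1/p - 1/r)^{1/r}$, as desired.

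There is no real obstacle here; the only subtlety is to invoke the correct direction of the $N \leftrightarrow N^\wedge$ duality Lemma, and to notice that the integrability condition $\alpha r + r/p > 1$ is equivalent to the stated assumption $\alpha > 1/r - 1/p$. Everything else is a one-line computation; in particular the $t$-dependence cancels as expected, which is what makes the infimum yield the clean quantitative estimate (\ref{eq:CNq-specialized}).
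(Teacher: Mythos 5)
Your proof is correct and is essentially the paper's own argument: the paper likewise derives the comparison $N^\wedge(t)/N^\wedge(s) \leq (t/s)^{\alpha}$ for $s \geq t$ (its estimate (\ref{eq:N-alpha-decay})) and plugs it into the bound (\ref{eq:CNq}) of Theorem \ref{thm:Orlicz-12-strong}, leaving the same one-line integral computation implicit that you carry out explicitly. The only cosmetic point is that the duality lemma appears after Remark \ref{rem:J-wedge} and is stated with the opposite monotonicities (non-decreasing versus non-increasing), so you are really invoking its mirror version, which holds by the identical argument.
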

\begin{proof}
The assumptions imply that:
\begin{equation} \label{eq:N-alpha-decay}
 \brac{\frac{t}{s}}^{1/q} \leq \frac{N^\wedge(t)}{N^\wedge(s)} \leq \brac{\frac{t}{s}}^{\alpha}
\end{equation}
whenever $s \geq t$.
Applying Theorem \ref{thm:Orlicz-12-strong} and using
(\ref{eq:N-alpha-decay}), it is straightforward to obtain a lower
bound on the expression in (\ref{eq:CNq}), which yields the bound in
(\ref{eq:CNq-specialized}).
\end{proof}

It was shown by Bobkov and Zegarlinski \cite[Proposition 3.1]{BobkovZegarlinski} (generalizing the
case $q=2$ due to Bobkov and G\"{o}tze \cite[Proposition 4.1]{BobkovGotzeLogSobolev}) that the
following $q$-log-Sobolev inequality (with $1 \leq q \leq 2$):
\begin{equation} \label{eq:q-log-Sob}
\forall f \in \F \;\;\; D_1 \brac{ \int |f|^q \log |f|^q d\mu - \int |f|^q d\mu \log (\int |f|^q d\mu)}^{1/q}  \leq \norm{\abs{\nabla f}}_{L_q(\mu)}
\end{equation}
is equivalent to the inequality:
\begin{equation} \label{eq:N_q-log-Sob}
\forall f \in \F \;\;\; D_2 \norm{ f - E_\mu f }_{\varphi_q(\mu)} \leq \norm{\abs{\nabla
f}}_{L_q(\mu)},
\end{equation}
where $\varphi_q(t) = t^q \log(1+t^q)$, and $D_1 \simeq D_2$ uniformly on $q \in [1,2]$.
Using Lemma \ref{lem:E-M}, we can replace $E_\mu f$ in
(\ref{eq:N_q-log-Sob}) by $M_\mu f$, at the expense of an additional
universal constant. Using Corollary \ref{cor:Orlicz-12-strong-easy}
with $N=\varphi_q$ and $\alpha = \frac{1}{2q} > 1/q - 1/2$ in the
range $q \in (1,2]$, we can easily show that the $q$-log-Sobolev
inequality (\ref{eq:q-log-Sob}) implies a corresponding
isoperimetric inequality. However, to handle the entire range $q \in
[1,2]$ uniformly, we will need to turn to Theorem
\ref{thm:Orlicz-12-strong}.

\begin{cor} \label{cor:q-log-Sob-implies-isop}
Under our convexity assumptions, the $q$-log-Sobolev inequality (\ref{eq:q-log-Sob}) for $1
\leq q \leq 2$ implies the following isoperimetric inequality:
\begin{equation} \label{eq:q-log-Sob-Iso}
 \tilde{I}(t) \geq c D_1 t \log^{1/q} 1/t \;\;\; \forall t \in [0,1/2] ~,
\end{equation}
where $c>0$ is a universal constant.
\end{cor}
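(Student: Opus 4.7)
My plan is to pass from the $q$-log-Sobolev inequality to an Orlicz-Sobolev inequality and then invoke Theorem \ref{thm:Orlicz-12-strong}. The Bobkov--Zegarlinski equivalence recalled just above the statement says (\ref{eq:q-log-Sob}) is equivalent, up to universal constants uniform in $q \in [1,2]$, to (\ref{eq:N_q-log-Sob}) with the Young function $N = \varphi_q(t) = t^q \log(1+t^q)$. Since $\varphi_q$ is convex, Lemma \ref{lem:E-M} lets me exchange $E_\mu$ for $M_\mu$ at the cost of a further universal constant, putting me exactly in the setting required.

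Since $\varphi_q(t)^{1/q}/t = \log^{1/q}(1+t^q)$ is manifestly non-decreasing, Theorem \ref{thm:Orlicz-12-strong} applies and produces $\tilde{I}(t) \geq C_{\varphi_q,q} D_1\, t^{1-1/q}\varphi_q^\wedge(t)$ for all $t \in (0,1/2]$. A direct asymptotic inversion of $u^q \log(1+u^q) = 1/t$, comparing $\log(1+u^q)$ with $q \log u$ for $u$ large, yields the two-sided estimate $\varphi_q^\wedge(t) \simeq t^{1/q} \log^{1/q}(1/t)$ on $(0,1/2]$, with constants independent of $q \in [1,2]$. Plugging this in reduces the proof to showing that $C_{\varphi_q,q} \geq c > 0$ uniformly in $q$, at which point the announced isoperimetric estimate $\tilde{I}(t) \geq c D_1 t \log^{1/q}(1/t)$ follows.

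This uniform lower bound is the main obstacle. The easier Corollary \ref{cor:Orlicz-12-strong-easy} is unavailable here, because the admissible range $\alpha > 1/q - 1/2$ collapses at $q = 1$ (for $\varphi_1(t^\alpha)/t$ to be non-increasing one needs $\alpha < 1/2$). I therefore work directly with the infimum in (\ref{eq:CNq}), using $r = 2$ and $p = q^*$. Split $\int_t^\infty s^{-r/p} \varphi_q^\wedge(s)^{-r}\,ds$ at $s = 1/2$: on $[1/2,\infty)$ the modification from Corollary \ref{cor:N-at-0} replaces $\varphi_q$ by a pure power with constants controlled uniformly in $q$, so the tail contributes at most an $O(1)$ term. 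On $[t,1/2]$, substituting the two-sided estimate $\varphi_q^\wedge(s) \simeq s^{1/q}\log^{1/q}(1/s)$ and using the identity $r/p + r/q = 2$ (noted at the start of the proof of Theorem \ref{thm:Orlicz-12-strong}) reduces the integrand to $s^{-2} \log^{-2/q}(1/s)$, and an elementary change of variable shows this integral is $\simeq (1/t)\log^{-2/q}(1/t)$ uniformly in $q$. Assembling these with $\varphi_q^\wedge(t) \simeq t^{1/q}\log^{1/q}(1/t)$ and $t^{1/r-1/p} = t^{1/q - 1/2}$, the ratio inside the infimum of (\ref{eq:CNq}) collapses to a universal constant, independent of $t$ and $q$, which is exactly the required bound.
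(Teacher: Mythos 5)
Your proposal is correct and follows essentially the same route as the paper's proof: pass via Bobkov--Zegarlinski, Lemma \ref{lem:E-M} and Corollary \ref{cor:N-at-0} to an Orlicz--Sobolev inequality for (the modified) $\varphi_q$, apply Theorem \ref{thm:Orlicz-12-strong} with the estimate $\varphi_q^\wedge(t)\simeq t^{1/q}\log^{1/q}(1+1/t)$, and bound the constant in (\ref{eq:CNq}) uniformly in $q\in[1,2]$ by splitting the integral at $s=1/2$ and a change of variables, exactly as the paper does (including the observation that Corollary \ref{cor:Orlicz-12-strong-easy} fails to give uniformity down to $q=1$).
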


\begin{proof}
$\varphi_q(t)^{1/q}/t$ is non-decreasing, so using Lemma
\ref{lem:E-M} and Corollary \ref{cor:N-at-0}, (\ref{eq:q-log-Sob})
implies that:
\[
\norm{ f - M_\mu f }_{N_q(\mu)} \leq C D_1 \norm{\abs{\nabla
f}}_{L_q(\mu)},
\]
where $C>0$ is a universal constant and:
\[
N_q(t) = \begin{cases} 2 (t/\varphi_q^{-1}(2))^q & t \in [0,\varphi_q^{-1}(2)] \\
\varphi_q(t) & t \in [\varphi_q^{-1}(2),\infty) \end{cases} ~.
\]
Note that $N_q(t)^{1/q}/t$ is still non-decreasing. Clearly:
\[
N_q^\wedge(t) = \begin{cases} \varphi_q^\wedge(t) & t \in [0,1/2] \\
\frac{2^{1/q}}{\varphi_q^{-1}(2)} t^{1/q} & t \in [1/2,\infty)
\end{cases} ~,
\]
and a standard calculation shows that:
\begin{equation} \label{eq:N_q-wedge}
\varphi_q^\wedge(t) \simeq t^{1/q} \log^{1/q}(1+1/t) \;\;\; \forall
t \in [0,1/2] ~,
\end{equation}
uniformly on $q\in [1,2]$. Hence, using Theorem
\ref{thm:Orlicz-12-strong} to deduce the isoperimetric inequality
(\ref{eq:q-log-Sob-Iso}), it remains to bound the expression in
(\ref{eq:CNq}) from below uniformly in $q \in [1,2]$. This amounts
to showing that:
\[
\sup_{0<t<1/2} \int_t^{\infty} \frac{t^{2/p-1} N_q^\wedge(t)^2
ds}{s^{2/p} N_q^\wedge(s)^2} \leq C_1 ~,
\]
where $p=q^*$ and $C_1>0$ is a universal constant. First, we bound
the tail of this integral using (\ref{eq:N_q-wedge}):
\[
 \int_{1/2}^{\infty} \frac{t^{2/p-1} N_q^\wedge(t)^2
ds}{s^{2/p} N_q^\wedge(s)^2} \leq t^{2/p-1} \varphi_q^\wedge(t)^2
 \int_{1/2}^\infty \frac{\varphi_q^{-1}(2)^2 ds}{2^{2/q} s^2} \leq C_2
 t \log^{2/q}(1+1/t) ~,
\]
which is bounded by a universal constant for $t \in [0,1/2]$. Next,
we use (\ref{eq:N_q-wedge}) and the change of variables $v =
\log(1+1/s)$ to bound:
\begin{eqnarray*}
\int_t^{1/2} \frac{t^{2/p-1} N_q^\wedge(t)^2 ds}{s^{2/p}
N_q^\wedge(s)^2} &\leq& C_3 \int_t^{1/2} \frac{t \log^{2/q}(1+1/t)
ds}{s^2 \log^{2/q}(1+1/s)} \leq C_3 t \log^{2/q}(1+1/t) \int_{\log
3}^{\log(1+1/t)} \frac{\exp(v)}{v^{2/q}} dv  \\
& \leq & C_4 t \log^{2/q}(1+1/t)
\frac{\exp(\log(1+1/t))}{\log^{2/q}(1+1/t)} = C_4 (1 + t) ~.
\end{eqnarray*}
We see that this is also bounded in the range $t \in [0,1/2]$, and
this concludes the proof.

\end{proof}
\begin{rem} \label{rem:BZ-direction}
The case $q=2$ was previously shown by Bakry--Ledoux \cite{BakryLedoux}
and Ledoux \cite{LedouxSpectralGapAndGeometry}. For general $1 \leq q \leq 2$, the reverse direction without any convexity assumptions was shown by Bobkov and Zegarlinski \cite{BobkovZegarlinski}, and given a different proof by Sodin and the author \cite{EMilmanSodinIsoperimetryForULC}. We will see a general argument for this in the next theorem.
\end{rem}

\subsection{Isoperimetry implies Orlicz-Sobolev}

\begin{thm} \label{thm:Orlicz-21-strong}
Let $1 \leq q < \infty$, and set $p = q^*$. Let $N \in \J$, so that
$N(t)^{1/q}/t$ is non-decreasing. Then:
\begin{equation} \label{eq:I-implies-Orlicz-assumption}
 \tilde{I}(t) \geq D t^{1-1/q} N^\wedge(t) \;\; \forall t \in [0,1/2]
\end{equation}
implies:
\begin{equation} \label{eq:I-implies-Orlicz-conclusion}
\forall f \in \F \;\; B_{N,q} D \norm{f - M_\mu f}_{N(\mu)} \leq \norm{ \abs{\nabla f}}_{L_q(\mu)}
~,
\end{equation}
where:
\begin{equation} \label{eq:B-estimate}
 B_{N,q} \geq \frac{1}{4} \inf_{0<t<1/2} \frac{1}{\brac{\int_t^{1/2}
\frac{N^\wedge(t)^p ds}{s N^\wedge(s)^p  }}^{1/p}} ~.
\end{equation}
\end{thm}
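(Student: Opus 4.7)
The plan is to chain three results already in place in the paper: pass from the isoperimetric hypothesis to a $Cap_1$ inequality via Corollary \ref{cor:cap1}, upgrade to a $Cap_q$ inequality via Proposition \ref{prop:increase-Orlicz-q} applied with $q_0 = 1$, and then convert back to an Orlicz-Sobolev inequality via Proposition \ref{prop:Capq-Lq}. No convexity assumption is needed at any stage; this direction is essentially a bookkeeping exercise on top of the capacity machinery of Section \ref{sec:capacities} and the capacity-improvement estimate of Proposition \ref{prop:increase-Orlicz-q}.

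Concretely, the function $J(t) := t^{1-1/q} N^\wedge(t)$ is continuous, non-negative, vanishing at $0$, and non-decreasing on $[0,1/2]$ (since $N^\wedge \in \J$ is non-decreasing and $1 - 1/q \geq 0$), so Corollary \ref{cor:cap1} gives $Cap_1(t,1/2) \geq D\, t^{1/p} N^\wedge(t)$ on $[0,1/2]$, with $p = q^*$. Applying Proposition \ref{prop:increase-Orlicz-q} at $q_0 = 1$ (so that $p_0 = \infty$, the exponent $p/p_0$ vanishes, and $\gamma_{p,\infty} = 1$ by inspection of (\ref{eq:gamma})) yields
\[
 Cap_q(t,1/2) \;\geq\; \frac{D}{\bigl(\int_t^{1/2} \frac{ds}{s\, N^\wedge(s)^p}\bigr)^{1/p}} \qquad \forall t \in [0,1/2] ~.
\]

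To match the form required by Proposition \ref{prop:Capq-Lq}, I will factor out $N^\wedge(t)$ by absorbing $N^\wedge(t)^p$ into the integrand and taking an infimum:
\[
 Cap_q(t,1/2) \;\geq\; D_2 \, N^\wedge(t), \qquad D_2 \;:=\; D \inf_{0<t<1/2} \frac{1}{\bigl(\int_t^{1/2} \frac{N^\wedge(t)^p\, ds}{s\, N^\wedge(s)^p}\bigr)^{1/p}} \;=\; 4 D B_{N,q} ~.
\]
Since $N(t)^{1/q}/t$ is non-decreasing by hypothesis, Proposition \ref{prop:Capq-Lq} converts this into the Orlicz-Sobolev inequality (\ref{eq:I-implies-Orlicz-conclusion}) with constant at least $D_2/4 = D B_{N,q}$, which is exactly what (\ref{eq:B-estimate}) asserts. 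The only point that warrants checking is that Proposition \ref{prop:increase-Orlicz-q} genuinely applies at the boundary value $q_0 = 1$ with $\gamma_{p,\infty} = 1$; this follows either by taking the limit $p_0 \to \infty$ in the explicit formula (\ref{eq:gamma}), or by specializing the proof with the convention that the factor $(s-a)^{p/p_0}$ collapses to $1$.
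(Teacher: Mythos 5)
Your proposal is correct and follows essentially the same route as the paper's own proof: Corollary \ref{cor:cap1} to get a $Cap_1$ bound, Proposition \ref{prop:increase-Orlicz-q} with $q_0=1$ (where indeed $p/p_0=0$ and $\gamma_{p,\infty}=1$) to pass to $Cap_q$, absorbing $N^\wedge(t)$ via the infimum defining $B_{N,q}$, and finally Proposition \ref{prop:Capq-Lq} (using that $N(t)^{1/q}/t$ is non-decreasing) with its factor $4$ loss, yielding exactly the constant $B_{N,q}D$.
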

\begin{proof}
We rewrite (\ref{eq:I-implies-Orlicz-assumption}) using Corollary \ref{cor:cap1} as:
\[
 Cap_1(t,1/2) \geq D t^{1/p} N^\wedge(t) \;\;\; \forall t \in [0,1/2] ~.
\]
Using Proposition \ref{prop:increase-Orlicz-q} (with $q_0=1,q=q$) to pass from $Cap_1$ to $Cap_q$,
we obtain that:
\[
 Cap_q(t,1/2) \geq D G_p(t) \;\;\; \forall t \in [0,1/2] ~,
\]
where $G_p$ is defined on $[0,1/2]$ as:
\[
G_p(t) := \frac{1}{\brac{\int_t^{1/2} \frac{ds}{s N^\wedge(s)^p}}^{1/p}} ~.
\]
Incidentally, if we replace $1/2$ in the upper range of the above integral by $\infty$, by Lemma
\ref{lem:big-lemma} with $N_1 = N$, $p_1 = p, p_2 = p, p_3=q$, we would have that
$G_p(t^q)/t$ is non-increasing, but this will not be used. The estimate in (\ref{eq:B-estimate})
ensures that:
\[
 G_p(t) \geq 4 B_{N,q} N^\wedge(t) \;\;\; \forall t \in [0,1/2] ~,
\]
so we know that:
\begin{equation} \label{eq:I-implies-Orlicz-almost}
 Cap_q(t,1/2) \geq 4 B_{N,q} D N^\wedge(t) \;\;\; \forall t \in [0,1/2] ~.
\end{equation}
Using that $N(t)^{1/q}/t$ is non-decreasing,
Proposition \ref{prop:Capq-Lq} then implies (\ref{eq:I-implies-Orlicz-conclusion}), as asserted.
\end{proof}

\begin{rem} \label{rem:Orlicz-21-btw}
Note that the assumption (\ref{eq:I-implies-Orlicz-assumption}) implies
(\ref{eq:I-implies-Orlicz-almost}) without assuming that $N(t)^{1/q}/t$ is non-decreasing.
\end{rem}

\begin{cor} \label{cor:Orlicz-21-strong-easy}
Let $1 \leq q < \infty$ and set $p=q^*$. Assume that:
\[
N(t)^{1/q}/t \text{ is non-decreasing and } N(t^{\alpha})/t \text{ is non-increasing },
\]
with some $\alpha > 0$. Then the assumption (\ref{eq:I-implies-Orlicz-assumption}) implies the
conclusion (\ref{eq:I-implies-Orlicz-conclusion})
with:
\begin{equation} \label{eq:BNq-specialized}
B_{N,q} \geq c \alpha^{1/p} ~,
\end{equation}
where $c>0$ is a universal constant.
\end{cor}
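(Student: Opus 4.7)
The plan is to invoke Theorem~\ref{thm:Orlicz-21-strong} directly, and reduce the task to estimating the infimum on the right-hand side of (\ref{eq:B-estimate}) from below by $c \alpha^{1/p}$. The only input we have beyond the assumptions of Theorem~\ref{thm:Orlicz-21-strong} is the condition that $N(t^{\alpha})/t$ is non-increasing, so the whole point is to convert this into a pointwise decay estimate for $N^\wedge$ and then carry out an elementary integral bound.

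First I would translate the hypothesis ``$N(t^{\alpha})/t$ is non-increasing'' into the dual statement for $N^\wedge$, using exactly the same substitution $t = N^{-1}(1/s)$ that appears in the proof of the unnamed lemma before Lemma~\ref{lem:Mazya-duality}. Repeating that computation (with monotonicities reversed) one obtains the key inequality
\[
\frac{N^\wedge(t)}{N^\wedge(s)} \leq \left(\frac{t}{s}\right)^{\alpha} \quad \text{for all } 0 < t \leq s~.
\]
This is the only place the second hypothesis on $N$ enters.

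Next I would plug this estimate into the integral appearing in (\ref{eq:B-estimate}). For any $t \in (0,1/2)$,
\[
\int_t^{1/2} \frac{N^\wedge(t)^p}{s\, N^\wedge(s)^p}\, ds \leq \int_t^{1/2} \frac{1}{s}\left(\frac{t}{s}\right)^{p\alpha} ds = t^{p\alpha}\int_t^{1/2} s^{-1-p\alpha}\, ds \leq \frac{t^{p\alpha}}{p\alpha}\cdot t^{-p\alpha} = \frac{1}{p\alpha}~.
\]
Taking the $p$-th root and the reciprocal, the infimum in (\ref{eq:B-estimate}) is bounded below by $\tfrac{1}{4}(p\alpha)^{1/p} \geq \tfrac{1}{4}\alpha^{1/p}$, which is the desired lower bound on $B_{N,q}$ up to the universal constant $c$.

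There is essentially no main obstacle here beyond making the duality translation precise; once the pointwise bound on $N^\wedge(t)/N^\wedge(s)$ is in hand, the integral estimate is a one-line computation and Theorem~\ref{thm:Orlicz-21-strong} does all the real work. The assumption that $N(t)^{1/q}/t$ is non-decreasing is used only implicitly, to apply Theorem~\ref{thm:Orlicz-21-strong} in the first place.
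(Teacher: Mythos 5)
Your proposal is correct and is essentially the paper's own argument: the paper proves this corollary by the same route as Corollary \ref{cor:Orlicz-12-strong-easy}, namely converting the hypothesis on $N(t^\alpha)/t$ into the decay estimate $N^\wedge(t)/N^\wedge(s) \leq (t/s)^\alpha$ for $s \geq t$ (the upper bound in (\ref{eq:N-alpha-decay})) and plugging it into (\ref{eq:B-estimate}), exactly as you do. Your explicit integral computation is the "straightforward" step the paper omits, and it is carried out correctly.
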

\begin{proof}
Exactly as in the proof of Corollary \ref{cor:Orlicz-12-strong-easy}.
\end{proof}

Using this for $N=\varphi_q$ and $\alpha = \frac{1}{2q}$, we see that as already noted in Remark \ref{rem:BZ-direction}, the isoperimetric inequality $(\ref{eq:q-log-Sob-Iso})$ implies without any further assumptions the $q$-log-Sobolev inequalities (\ref{eq:N_q-log-Sob}) and (\ref{eq:q-log-Sob}).

\subsection{Summary}

To conclude this section, we provide a slightly stronger version of Theorem \ref{thm:Intro-Orlicz}
from the Introduction, on the equivalence of isoperimetric and
Orlicz-Sobolev functional inequalities under our convexity assumptions. Our results in this
section are more general, but this theorem summarizes the most useful cases given by Theorem
\ref{thm:Orlicz-12} and Corollaries \ref{cor:Orlicz-12-strong-easy} and
\ref{cor:Orlicz-21-strong-easy}, and generalizes the results from \cite{EMilmanRoleOfConvexityArxiv} (which dealt with the case $N(t) = t^p$ below).

\begin{thm} \label{thm:Orlicz-summary}
Let $1 \leq q \leq \infty$, and let $N \in \J$. Assume that:
\begin{equation} \label{eq:OS-A}
\text{our convexity assumptions are satisfied} ~,
\end{equation}
\begin{equation} \label{eq:OS-B}
\text{$N$ is a Young function} ~,
\end{equation}
\begin{equation} \label{eq:OS-C}
N(t)^{1/q}/t \text{ is non-decreasing } ~, ~ \exists \alpha > 0 \;\; N(t^\alpha)/t
\text{ is non-increasing} ~,
\end{equation}
\begin{equation} \label{eq:OS-D}
 \alpha > 1/q - 1/2 ~.
\end{equation}
Then the following statements are equivalent:
\begin{enumerate}
 \item
\[
 \forall f \in \F \;\; D_1 \norm{f - M_\mu f}_{N(\mu)} \leq \norm{ \abs{\nabla f} }_{L_q(\mu)}
\]
\item
\[
 \tilde{I}(t) \geq D_2 t^{1-1/q} N^\wedge(t)
\;\;\; \forall t \in [0,1/2] ~,
\]
\end{enumerate}
where the best constants $D_1,D_2$ above satisfy:
\[
c_1 C_{\alpha,q} D_1 \leq D_2 \leq c_2 B_{\alpha,q} D_1 ~,
\]
with $c_1,c_2 > 0$ universal constants and:
\begin{equation} \label{eq:C-B-defs}
 C_{\alpha,q} = \begin{cases} (\alpha + 1/2 - 1/q)^{1/2} & q<2 \\ \alpha^{1/q} & q \geq 2
\end{cases} \quad
, \quad B_{\alpha,q} = \alpha^{1/q - 1} ~.
\end{equation}
In fact, among the assumptions (\ref{eq:OS-A}), (\ref{eq:OS-B}), (\ref{eq:OS-C}),
(\ref{eq:OS-D}):
\begin{itemize}
 \item For the direction $(2) \Rightarrow (1)$ only (\ref{eq:OS-C}) is needed.
\item For the direction $(1) \Rightarrow (2)$ with $q \geq 2$ only (\ref{eq:OS-A}) and
one of (\ref{eq:OS-B}) or (\ref{eq:OS-C}) are needed, and if (\ref{eq:OS-B}) is used then
$C_{\alpha,q}$ can be chosen to be $1$.
\item  For the direction $(1) \Rightarrow (2)$ with $q < 2$ (\ref{eq:OS-B}) is not needed.
\end{itemize}
\end{thm}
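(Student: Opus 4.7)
The plan is to assemble Theorem \ref{thm:Orlicz-summary} as a bookkeeping exercise that pieces together Theorem \ref{thm:Orlicz-12} and Corollaries \ref{cor:Orlicz-12-strong-easy}, \ref{cor:Orlicz-21-strong-easy}, matching each direction and each parameter regime to the corresponding refined estimate already established, then reading off the exponents in (\ref{eq:C-B-defs}). No new argument is required; what has to be checked carefully is the identification of $C_{\alpha,q}$ and $B_{\alpha,q}$ with the constants appearing in those earlier statements, and the verification that the four bullet refinements at the end of the theorem match the minimal assumptions each ingredient actually uses.

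For the direction $(2) \Rightarrow (1)$, I will invoke Corollary \ref{cor:Orlicz-21-strong-easy}, whose hypotheses are precisely (\ref{eq:OS-C}). It produces $D_1 \geq c\,\alpha^{1/p}\,D_2$ with $p = q^*$, and the identity $1/p = 1 - 1/q$ converts this into $D_2 \leq c^{-1}\alpha^{1/q-1}D_1$, which is the asserted $B_{\alpha,q} = \alpha^{1/q-1}$. Note that neither (\ref{eq:OS-A}), (\ref{eq:OS-B}) nor (\ref{eq:OS-D}) is used, matching the first bullet.

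For the direction $(1) \Rightarrow (2)$ I will split on the value of $q$. When $q \geq 2$ and $N$ is Young, Theorem \ref{thm:Orlicz-12} applies under (\ref{eq:OS-A}) and (\ref{eq:OS-B}) alone and delivers a universal constant, so $C_{\alpha,q}$ can be taken to be $1$. If instead only (\ref{eq:OS-A}) and (\ref{eq:OS-C}) are imposed, I use Corollary \ref{cor:Orlicz-12-strong-easy}; specializing $r = p = q$ in (\ref{eq:CNq-specialized}) gives $C_{N,q} \geq c\,\alpha^{1/q}$, which is the $q \geq 2$ branch of $C_{\alpha,q}$ in (\ref{eq:C-B-defs}). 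In the range $q < 2$, Corollary \ref{cor:Orlicz-12-strong-easy} is again the right tool, now with $r = 2$ and $p = q^*$; here $1/r - 1/p = 1/q - 1/2$, so the hypothesis $\alpha > 1/r - 1/p$ of that corollary is exactly (\ref{eq:OS-D}), and the bound becomes $C_{N,q} \geq c(\alpha + 1/2 - 1/q)^{1/2}$.

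Verifying the bulleted refinements is then immediate from this case split: $(2) \Rightarrow (1)$ uses only (\ref{eq:OS-C}); for $(1) \Rightarrow (2)$ with $q \geq 2$ one may use either (\ref{eq:OS-B}) alone (through Theorem \ref{thm:Orlicz-12}, with $C_{\alpha,q} = 1$) or (\ref{eq:OS-C}) alone (through Corollary \ref{cor:Orlicz-12-strong-easy}), and (\ref{eq:OS-D}) is never invoked; for $(1) \Rightarrow (2)$ with $q < 2$ one uses (\ref{eq:OS-A}), (\ref{eq:OS-C}) and (\ref{eq:OS-D}) but not (\ref{eq:OS-B}). I do not foresee any genuine obstacle — the substantive work is already in Theorem \ref{thm:Orlicz-12-strong} and Corollaries \ref{cor:Orlicz-12-strong-easy}, \ref{cor:Orlicz-21-strong-easy} — so the only point that wants care is the algebraic match between $\alpha + 1/p - 1/r$, $\alpha^{1/q}$, $\alpha^{1/p}$ and the compact expressions for $C_{\alpha,q}, B_{\alpha,q}$ in (\ref{eq:C-B-defs}).
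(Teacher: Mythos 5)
Your proposal is correct and coincides with the paper's own proof, which likewise assembles the theorem by citing Theorem \ref{thm:Orlicz-12} and Corollaries \ref{cor:Orlicz-12-strong-easy} and \ref{cor:Orlicz-21-strong-easy}; your constant bookkeeping ($1/p=1-1/q$, $r=p=q$ for $q\geq 2$, $r=2$, $p=q^*$ for $q<2$) correctly reproduces $B_{\alpha,q}$ and both branches of $C_{\alpha,q}$, as well as the bulleted minimal-assumption refinements.
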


\begin{proof}
The direction $(1) \Rightarrow (2)$ was proved in Theorem \ref{thm:Orlicz-12} and Corollary
\ref{cor:Orlicz-12-strong-easy}.
The direction $(2) \Rightarrow (1)$ was proved in Corollary \ref{cor:Orlicz-21-strong-easy}.
\end{proof}

\section{Tensorization} \label{sec:tensorization}

As mentioned in the Introduction, the results of Section
\ref{sec:general} coupled with the results of Section
\ref{sec:capacities} on the equivalence of capacity inequalities and
Orlicz-Sobolev inequalities, allow us to directly infer
isoperimetric inequalities from capacity inequalities (under
convexity assumptions of course).

\begin{thm} \label{thm:cap-iso}
Let $1 \leq q < \infty$, and let $N\in \J$. If:
\begin{equation} \label{eq:as-A}
\text{our convexity assumptions are satisfied and $N(t)^{1/q}/t$ is
non-decreasing} ~,
\end{equation}
\begin{equation}\label{eq:as-B}
\text{$N$ is a Young function} ~,
\end{equation}
\begin{equation} \label{eq:as-C}
 \exists \alpha > 0 \;\; N(t^\alpha)/t \text{ is non-increasing } ~,
\end{equation}
\begin{equation} \label{eq:as-D}
 \alpha > 1/q - 1/2 ~.
\end{equation}
then the following statements are equivalent:
\begin{enumerate}
 \item
\[
 Cap_q(t,1/2) \geq D_1 N^\wedge(t) \;\;\; \forall t \in [0,1/2] ~,
\]
\item
\[
 I(t) \geq D_2 t^{1-1/q} N^\wedge(t) \;\;\; \forall t \in [0,1/2] ~,
\]
\end{enumerate}
where the best constants $D_1,D_2$ above satisfy:
\[
c_1 C_{\alpha,q} D_1 \leq D_2 \leq c_2 B_{\alpha,q} D_1 ~,
\]
with $c_1,c_2 > 0$ universal constants and $B_{\alpha,q}, C_{\alpha,q}$ as in (\ref{eq:C-B-defs}).
In fact, among the assumptions (\ref{eq:as-A}), (\ref{eq:as-B}), (\ref{eq:as-C}), (\ref{eq:as-D}):
\begin{itemize}
\item For the direction $(2) \Rightarrow (1)$ only (\ref{eq:as-C}) is needed.
\item For the direction $(1) \Rightarrow (2)$ with $q \geq 2$ only (\ref{eq:as-A}) and
one of (\ref{eq:as-B}) or (\ref{eq:as-C}) are needed, and if (\ref{eq:as-B}) is used then
$C_{\alpha,q}$ in (\ref{eq:C-B-defs}) can be chosen to be $1$.
\item  For the direction $(1) \Rightarrow (2)$ with $q < 2$ (\ref{eq:as-B}) is not needed.
\end{itemize}
\end{thm}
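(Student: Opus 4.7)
The plan is to recognize Theorem \ref{thm:cap-iso} as essentially a corollary of two chains of equivalences that have already been set up: the capacity–Orlicz-Sobolev equivalence of Section \ref{sec:capacities} (Propositions \ref{prop:Capq-Lq-weak} and \ref{prop:Capq-Lq}) and the Orlicz-Sobolev–isoperimetric equivalence summarized in Theorem \ref{thm:Orlicz-summary}. In other words, the proof is a two-step composition of known implications, with the exceptional sub-statements transferring mechanically.

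For the direction $(1) \Rightarrow (2)$: first apply Proposition \ref{prop:Capq-Lq} (whose hypothesis is precisely that $N(t)^{1/q}/t$ is non-decreasing) to convert the $q$-capacity lower bound into the Orlicz-Sobolev inequality $D'_1 \norm{f - M_\mu f}_{N(\mu)} \leq \norm{\abs{\nabla f}}_{L_q(\mu)}$ with $D'_1 \geq D_1/4$. Then invoke the direction $(1) \Rightarrow (2)$ of Theorem \ref{thm:Orlicz-summary} under our convexity assumptions to deduce $\tilde{I}(t) \geq c \, C_{\alpha,q} D'_1 \, t^{1-1/q} N^\wedge(t)$. Noting that $I(t) \geq \tilde{I}(\min(t,1-t))$ with the behaviour of $t^{1-1/q} N^\wedge(t)$ under $t \mapsto 1-t$ (bounded above and below by universal factors on $[1/2,1]$) gives the claimed isoperimetric inequality with the right constants. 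The various relaxations of hypotheses in each regime ($q \geq 2$ versus $q<2$, (\ref{eq:as-B}) versus (\ref{eq:as-C})) are inherited verbatim from the corresponding statement in Theorem \ref{thm:Orlicz-summary}.

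For the direction $(2) \Rightarrow (1)$: the argument is precisely the first half of the proof of Theorem \ref{thm:Orlicz-21-strong}, stopped one step early. Use Corollary \ref{cor:cap1} to rewrite the isoperimetric hypothesis as $Cap_1(t,1/2) \geq D_2 \, t^{1/p} N^\wedge(t)$ on $[0,1/2]$; then apply Proposition \ref{prop:increase-Orlicz-q} with $q_0=1$ to pass from $Cap_1$ to $Cap_q$, obtaining the bound $Cap_q(t,1/2) \geq 4 B_{N,q} D_2 N^\wedge(t)$ via (\ref{eq:B-estimate}); finally, Corollary \ref{cor:Orlicz-21-strong-easy} (or rather its internal estimate, via (\ref{eq:N-alpha-decay})) yields $B_{N,q} \geq c \alpha^{1/p} \geq c' B_{\alpha,q}$ under hypothesis (\ref{eq:as-C}). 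Crucially, this route bypasses Proposition \ref{prop:Capq-Lq}, so the monotonicity of $N(t)^{1/q}/t$ from (\ref{eq:as-A}) is not needed here — exactly the point flagged in Remark \ref{rem:Orlicz-21-btw}.

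There is no essential obstacle in either direction, since every ingredient has been proved in the previous two sections; the only thing to watch is the bookkeeping of which hypotheses feed into which step. The most delicate point is verifying that the numerical constants $C_{\alpha,q}$ and $B_{\alpha,q}$ come out exactly as in (\ref{eq:C-B-defs}), but this is immediate from the explicit bounds in Corollaries \ref{cor:Orlicz-12-strong-easy} and \ref{cor:Orlicz-21-strong-easy} combined with the fact that Propositions \ref{prop:Capq-Lq-weak} and \ref{prop:Capq-Lq} lose only a factor of $4$. Accordingly, the full write-up should amount to little more than two short paragraphs invoking the appropriate results, with the exceptional cases listed in parallel to those of Theorem \ref{thm:Orlicz-summary}.
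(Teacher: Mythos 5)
Your proposal is correct and follows essentially the same route as the paper's own proof: the direction $(1)\Rightarrow(2)$ is obtained from Proposition \ref{prop:Capq-Lq} combined with Theorem \ref{thm:Orlicz-12} and Corollary \ref{cor:Orlicz-12-strong-easy}, and the direction $(2)\Rightarrow(1)$ from Theorem \ref{thm:Orlicz-21-strong} together with Remark \ref{rem:Orlicz-21-btw} and Corollary \ref{cor:Orlicz-21-strong-easy}, with the hypothesis relaxations inherited exactly as you list them. One bookkeeping slip to drop: since $B_{\alpha,q}=\alpha^{1/q-1}=\alpha^{-1/p}$, the estimate $B_{N,q}\geq c\,\alpha^{1/p}$ equals $c\,B_{\alpha,q}^{-1}$ rather than being bounded below by $c'B_{\alpha,q}$, and it is this form that directly yields $D_2\leq c_2\,B_{\alpha,q}\,D_1$.
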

\begin{proof}
The direction $(1) \Rightarrow (2)$ follows from Proposition \ref{prop:Capq-Lq} coupled with
Theorem \ref{thm:Orlicz-12} and Corollary \ref{cor:Orlicz-12-strong-easy}. The direction $(2)
\Rightarrow (1)$ follows from Theorem \ref{thm:Orlicz-21-strong}, Remark \ref{rem:Orlicz-21-btw} and
Corollary \ref{cor:Orlicz-21-strong-easy} (note that we indeed do not need the assumption that
$N(t)^{1/q}/t$ is non-decreasing).
\end{proof}

It has been established in recent years 
that several other types of functional inequalities are equivalent
to $2$-capacity inequalities. These include Beckner-type
inequalities \cite{BCRHard} (including the Lata{\l}a--Oleszkiewicz
inequality as in \cite{LatalaOleszkiewiczBecknerInq}) and additive
$\Phi$-Sobolev inequalities \cite{RobertoZegarlinski}. The advantage
of these inequalities compared to the Orlicz-Sobolev inequalities
lies in the fact that they admit tensorization. This easily allows
us to deduce an extension of the dimension-free tensorization
results of Barthe--Cattiaux--Roberto \cite{BCRHard,BCRSoft}. We
demonstrate this with Beckner-type inequalities (\ref{eq:Beckner}),
using Theorem 9 and Lemma 8 in \cite{BCRHard} as cited in
\cite{RobertoZegarlinski} (with a trivial change of notation):

\begin{thm}[Barthe--Cattiaux--Roberto] \label{thm:cap2}
Let $T:[1,\infty) \rightarrow \Real_+$ denote a non-decreasing function such that $T(t)^2/t$ is
non-increasing. Assume that our metric space $(\Omega,d)$ is Euclidean space
$(\Real^n,\abs{\cdot})$ and that $\mu$ is an absolutely continuous probability measure on
$\Real^n$. Then the following statements are equivalent:
\begin{enumerate}
 \item
\begin{equation} \label{eq:Beckner}
 \forall f \in \F \;\;\; D_1 \sup_{p \in (1,2)} \brac{ \int f^2 d\mu - (\int |f|^p d\mu)^{2/p}
}^{1/2} T\brac{\frac{1}{2-p}} \leq \norm{ \abs{\nabla f} }_{L_2(\mu)} ~,
\end{equation}
\item
\[
 Cap_2(t,1/2) \geq D_2 t^{1/2} T(\log(1+1/t)) \;\;\; \forall t \in [0,1/2] ~,
\]
\end{enumerate}
with the best constants $D_1,D_2$ above satisfying $D_1 / \sqrt{6} \leq D_2 \leq \sqrt{20} D_1$.
\end{thm}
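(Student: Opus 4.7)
The plan is to prove both directions via the capacity method developed throughout Section \ref{sec:capacities}, viewing the Beckner-type inequality (\ref{eq:Beckner}) as a family of weak Sobolev inequalities indexed by $p \in (1,2)$ and weighted by $T(1/(2-p))$. The structure is analogous to that of Propositions \ref{prop:Capq-Lq-weak}--\ref{prop:Capq-Lq}, with the single Orlicz (quasi-)norm replaced by the supremum $\sup_{p \in (1,2)} T(1/(2-p)) \mathrm{Var}_p(f)^{1/2}$ over the $p$-variance functionals $\mathrm{Var}_p(f) := \int f^2 d\mu - (\int |f|^p d\mu)^{2/p}$.

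For the direction $(1) \Rightarrow (2)$, I would test (\ref{eq:Beckner}) against a Lipschitz-on-balls function $\Phi : \Omega \to [0,1]$ with $\mu\{\Phi = 1\} \geq t$ and $\mu\{\Phi = 0\} \geq 1/2$ (or against a suitable power $\Phi^k$ to sharpen the estimate). The lower bound $\int \Phi^2 d\mu \geq t$ is immediate, while H\"older on the support of $\Phi$ (of $\mu$-measure at most $1/2$) controls $(\int \Phi^p d\mu)^{2/p}$, so that $\mathrm{Var}_p(\Phi)$ is of order $t(2-p)$ for $p$ near $2$. The key optimization is to choose $\lambda := 2-p$ as a function of $t$ so that $1/\lambda$ matches $\log(1+1/t)$ appearing in the target; the hypothesis that $T(u)^2/u$ is non-increasing is exactly what makes $T(1/\lambda)\sqrt{\lambda}$ behave correctly under this choice and deliver the bound $Cap_2(t,1/2) \geq D_2 t^{1/2} T(\log(1+1/t))$ after infimizing over admissible $\Phi$.

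For the converse $(2) \Rightarrow (1)$, I would perform a dyadic level-set decomposition of $f \in \F$, in the style of the proof of Proposition \ref{prop:Capq-Lq}. After passing to $f - M_\mu f$ and splitting into positive and negative parts, I set $\Phi_k := ((|f| - 2^k)/2^k)_+ \wedge 1$, which are admissible test functions for the capacity at threshold $\mu_k := \mu\{|f| \geq 2^{k+1}\}$. Summing the gradient estimates over $k$ yields $\norm{\abs{\nabla f}}_{L_2(\mu)}^2 \gtrsim D_2^2 \sum_k 2^{2k} \mu_k T(\log(1+1/\mu_k))^2$. On the other hand, each $\mathrm{Var}_p(f)$ admits a dyadic upper bound of the form $\sum_k 2^{2k} \mu_k g_p(\mu_k)$ on the same level sets; a term-by-term comparison, exploiting the monotonicity of $T(u)^2/u$ to convert $T(\log(1+1/\mu_k))$ into $T(1/(2-p))$ with the correct $p$-dependent factor, then gives $\norm{\abs{\nabla f}}_{L_2(\mu)} \geq c D_2 T(1/(2-p)) \mathrm{Var}_p(f)^{1/2}$ uniformly in $p$, and taking the supremum recovers (1).

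The main obstacle is the $(2) \Rightarrow (1)$ direction, specifically matching the dyadic sum coming from the capacity decomposition to the supremum appearing in (\ref{eq:Beckner}) with constants independent of both $f$ and $p$. This is essentially the content of Lemma 8 of \cite{BCRHard}; the assumption that $T(u)^2/u$ is non-increasing is crucial precisely here, since it allows one to replace $T(\log(1+1/\mu_k))$ by $T(1/(2-p))$ times a factor summable against the distribution of level-set measures of $f$. The explicit constants $1/\sqrt{6}$ and $\sqrt{20}$ in the statement emerge from carefully tracking the losses in these discrete-to-continuous passages.
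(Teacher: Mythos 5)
The paper does not actually prove Theorem \ref{thm:cap2}: it is quoted verbatim (up to notation and the square-root normalization of $Cap_2$) from Theorem 9 and Lemma 8 of \cite{BCRHard}, so the only meaningful comparison is with that argument, and your proposal does not replace it. The serious gap is in your direction $(1)\Rightarrow(2)$. Testing (\ref{eq:Beckner}) with a capacity test function $\Phi$ and applying H\"older on its support (of measure at most $1/2$) gives only $\mathrm{Var}_p(\Phi):=\int\Phi^2 d\mu-\brac{\int\Phi^p d\mu}^{2/p}\ge c\,t\,(2-p)$, hence $Cap_2(t,1/2)^2\ge c\,D_1^2\,t\,\sup_{p\in(1,2)}T\brac{\tfrac{1}{2-p}}^2(2-p)$. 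Since $T(u)^2/u$ is non-increasing, $T(1/(2-p))^2(2-p)\le T(1)^2$ for every $p$, so this route yields at best $Cap_2(t,1/2)\ge c\,D_1\sqrt{t}\,T(1)$, and your specific choice $2-p\simeq 1/\log(1+1/t)$ yields only $c\,D_1\sqrt{t/\log(1+1/t)}\,T(\log(1+1/t))$; either way you fall short of the stated bound $\sqrt{t}\,T(\log(1+1/t))$ by a factor which is unbounded as $t\to 0$ whenever $T$ is unbounded. Contrary to what you write, the monotonicity of $T(u)^2/u$ cannot close this gap --- it is exactly what caps $\sup_p T(1/(2-p))^2(2-p)$ at $T(1)^2$. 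What is really needed is the finer lower bound $\mathrm{Var}_p(\Phi)\ge c\,\mu\set{\Phi=1}$, valid as soon as $2-p\gtrsim 1/\log(1+1/\mu\set{\Phi=1})$, which exploits the smallness of the level set $\set{\Phi=1}$ itself (so that the relevant power $\mu\set{\Phi=1}^{(2-p)/p}$ stays bounded away from $1$), uniformly over all admissible $\Phi$; proving this is precisely the nontrivial content of the Barthe--Cattiaux--Roberto argument, and your parenthetical fallback of testing with powers $\Phi^k$ does not repair it, since the gradient term then costs a factor $k^2$ with no guaranteed gain in the variance term.

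Your direction $(2)\Rightarrow(1)$ has the right skeleton (dyadic level sets, as in Proposition \ref{prop:Capq-Lq}), but the decisive step --- bounding $\sup_{p}T(1/(2-p))^2\mathrm{Var}_p(f)$ by the dyadic capacity sum with a constant independent of $f$ and $p$, including the recentering of the non-translation-invariant functional $f\mapsto\brac{\int|f|^p d\mu}^{2/p}$ and the conversion of $T(\log(1+1/\mu_k))$ into $T(1/(2-p))$ --- is exactly the content of Lemma 8 and Theorem 9 of \cite{BCRHard}, which you invoke rather than prove; the constants $1/\sqrt{6}$ and $\sqrt{20}$ are likewise asserted, not derived. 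As it stands, one direction of your proposal rests on an estimate that provably falls short, and the other rests on the cited result itself.
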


It is known (e.g. \cite{BCRHard}) that Beckner-type inequalities (\ref{eq:Beckner}) admit
tensorization, in the sense that if they hold for $(M,g,\mu)$ then they also hold for the Riemannian
product $(M^{\times k},g^{\otimes k},\mu^{\otimes k})$ for any $k\geq 1$. To obtain the most
general result, we will also need the following remarkable observation of Franck Barthe
\cite[Theorem 10]{BartheTensorizationGAFA} (which in fact holds for very general metric probability
spaces, but for simplicity we quote it in less general form; see also Ros
\cite{RosIsoperimetricProblemNotes}):

\begin{thm}[Barthe] \label{thm:Barthe}
Let $(M,g)$ denote a Riemannian manifold equipped with an absolutely continuous Borel
probability measure $\mu$, and let $\nu$ denote an even log-concave probability measure with
continuous density on $\Real$.  If:
\[
 I_{(M,g,\mu)} \geq I_{(\Real,\abs{\cdot},\nu)}
\]
then for any $k\geq 1$:
\[
I_{(M^{\times k},g^{\otimes k},\mu^{\otimes k})} \geq I_{(\Real^k,\abs{\cdot}^{\otimes
k},\nu^{\otimes k})} ~.
\]
\end{thm}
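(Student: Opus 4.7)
The plan is to reduce the tensorization statement to the tensorization of a single Bobkov-type functional inequality, exploiting the hypothesis only through that inequality. The key bridge is the following reformulation: for an even log-concave probability $\nu$ on $\Real$ with continuous density, Bobkov's characterization in \cite{BobkovExtremalHalfSpaces} asserts that half-lines are extremal for the isoperimetric problem of $\nu$, and this is equivalent to the functional inequality
\[
I_\nu\brac{E_\nu f} \leq E_\nu \sqrt{I_\nu(f)^2 + |f'|^2} \qquad \forall f:\Real \to [0,1] \text{ locally Lipschitz},
\]
where $I_\nu := I_{(\Real,|\cdot|,\nu)}$. The first step is to invoke this equivalence: set-form isoperimetry for $\nu$ produces the functional form, and the functional form in turn produces the set-form by evaluation on smooth approximations of characteristic functions (where $|f'|$ concentrates on $\partial A$ and $I_\nu(f)$ vanishes).

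Second, I would transport the functional inequality from $\nu$ to $(M,g,\mu)$. Since $I_{(M,g,\mu)} \geq I_\nu$ by hypothesis, any isoperimetric configuration on $M$ has boundary measure at least $I_\nu(\mu(A))$. By repeating Bobkov's derivation of the functional inequality (which only needs the pointwise lower bound on the profile, not equality), one obtains
\[
I_\nu\brac{E_\mu f} \leq E_\mu \sqrt{I_\nu(f)^2 + |\nabla f|^2} \qquad \forall f:M \to [0,1] \in \F.
\]
This is the content of Bobkov's ``weak-form implies strong-form'' passage in the Gaussian case, adapted to an arbitrary $I_\nu$; the only structural requirement is that $I_\nu$ be concave, which holds for even log-concave $\nu$.

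Third, I would tensorize. Bobkov-type inequalities admit a clean induction on dimension via conditional expectations: writing $f$ on $M^{\times k}$ and integrating out one coordinate at a time, the triangle inequality in $\Real^2$ combined with the one-dimensional inequality on each fiber yields the same inequality on the product, that is
\[
I_\nu\brac{E\, f} \leq E\sqrt{I_\nu(f)^2 + |\nabla f|^2} \qquad \forall f:M^{\times k} \to [0,1] \in \F(M^{\times k}),
\]
with expectation taken against $\mu^{\otimes k}$ and $|\nabla f|$ with respect to $g^{\otimes k}$. Specializing to smooth approximations of characteristic functions of Borel sets $A \subset M^{\times k}$ and passing to the limit gives
\[
\mu^{\otimes k,+}(A) \geq I_\nu(\mu^{\otimes k}(A)).
\]

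Finally, I would identify $I_\nu$ with $I_{(\Real^k,|\cdot|^{\otimes k},\nu^{\otimes k})}$. The same Bobkov inequality applied on $(\Real^k,\nu^{\otimes k})$ gives the lower bound $\tilde I_{\nu^{\otimes k}} \geq I_\nu$; the matching upper bound is achieved by coordinate half-spaces $\set{x_1 \leq t}$, whose $\nu^{\otimes k}$-boundary measure is exactly $I_\nu(\nu((-\infty,t]))$. Hence $I_{(\Real^k,|\cdot|^{\otimes k},\nu^{\otimes k})} = I_\nu$, and the conclusion follows. The delicate step I expect to be most technical is the second one: verifying that the Bobkov functional inequality is genuinely equivalent to the pointwise profile bound, with the parity and log-concavity of $\nu$ used only to guarantee concavity and symmetry of $I_\nu$ (so that the triangle inequality required in the tensorization step is valid). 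Everything else is either a direct application of Bobkov's theorems cited in the text or the standard induction-on-coordinates tensorization scheme.
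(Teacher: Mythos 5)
You are attempting to prove a statement which the paper itself does not prove but quotes from Barthe \cite{BartheTensorizationGAFA} (Theorem 10 there), so your proposal must stand on its own; it does not, and the gap is the central step, not a technicality. Your step 2 asserts that the pointwise bound $I_{(M,g,\mu)} \geq I_\nu$ can be upgraded, ``by repeating Bobkov's derivation'', to the functional inequality $I_\nu\brac{E_\mu f} \leq E_\mu \sqrt{I_\nu(f)^2 + \abs{\nabla f}^2}$, with concavity of $I_\nu$ as the only requirement. No such general passage exists: the functional (Bobkov-type) form is strictly stronger than the set-level isoperimetric inequality, and only the direction ``functional $\Rightarrow$ set'' is cheap (test on indicators). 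The Gaussian ``set $\Rightarrow$ functional'' argument you allude to works by applying the isoperimetric inequality to the subgraph $\set{(x,y) : y \leq \Phi^{-1}(f(x))}$ in one extra dimension, i.e.\ it uses isoperimetry for the \emph{product} measure $\gamma_n \otimes \gamma$ with profile $I_\gamma$; in your setting this would require an isoperimetric inequality with profile $I_\nu$ for $\mu \otimes \nu$ on $M \times \Real$, which is essentially the statement you are trying to prove and is not implied by the hypothesis. Likewise, step 1 misreads \cite{BobkovExtremalHalfSpaces}: what the paper uses from it (in the proof of Theorem \ref{thm:tensorization}) is the bijection between even log-concave measures on $\Real$ and concave symmetric profiles, not an equivalence between the isoperimetric inequality for $\nu$ and a Bobkov functional inequality with profile $I_\nu$.

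That this step cannot be repaired follows from a counterexample already cited in Section \ref{sec:tensorization}. Your steps 2--4, applied to $(M,g,\mu)=(\Real,\abs{\cdot},\nu)$ (where the hypothesis holds with equality), together with your tensorization step (which indeed works for an arbitrary nonnegative profile, using only Minkowski's integral inequality), would yield $I_{(\Real^k,\abs{\cdot},\nu^{\otimes k})} \geq I_\nu$ for every even log-concave $\nu$, and your final step even claims equality. Both are false: Barthe \cite[Theorem 11]{BartheIsoperimetryUniformDistance} constructed a log-concave $\nu$ with $\liminf_{t \rightarrow 0+} I_{(\Real^2,\abs{\cdot},\nu^{\otimes 2})}(t)/I_{(\Real,\abs{\cdot},\nu)}(t) = 0$. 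Coordinate half-spaces only give the upper bound $I_{(\Real^k,\abs{\cdot},\nu^{\otimes k})} \leq I_\nu$; the matching lower bound is special to the Gaussian (Sudakov--Tsirelson, Borell) and fails in general, which is precisely why the theorem's conclusion is phrased with the product profile $I_{(\Real^k,\abs{\cdot}^{\otimes k},\nu^{\otimes k})}$ on the right-hand side rather than $I_\nu$. Barthe's actual proof is of a different nature (a transfer/symmetrization onto the one-dimensional log-concave model), exactly because no tensorizable functional inequality with profile $I_\nu$ is available from the set-level hypothesis alone.
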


We can now state the following extension of the tensorization results of
Barthe--Cattiaux--Roberto \cite{BCRHard,BCRSoft}, already roughly stated in the
Introduction. These authors obtained their result in \cite{BCRHard} primarily for the probability
measures $Z_\alpha \exp(-|x|^\alpha) dx$ on $\Real$ for $\alpha \in [1,2]$, and in \cite{BCRSoft}
for probability measures of the form $\exp(-\Phi(|x|)) dx$ where $\Phi$ is a convex function
such that $\sqrt{\Phi}$ is concave. These results cover the entire spectrum between
exponential and Gaussian tail decay. A result of Talagrand
\cite{TalagrandProductOfExponentialMeasure} asserts that for a dimension free
concentration inequality (and in particular, an isoperimetric inequality) to hold for an arbitrary
tensor power of a measure, it must have at least exponential tail decay. On the other hand, by the
Central-Limit Theorem (which applies by the fast tail decay), it is clear that an arbitrary tensor
power cannot have an isoperimetric inequality better than the Gaussian measure $\gamma$. This
explains the restriction on $\alpha$ and $\Phi$ above. In this sense the tensorization results of
\cite{BCRHard,BCRSoft} are sharp, but it would be interesting to obtain analogous results for
arbitrary log-concave measures whose isoperimetric profile is not better than that of the Gaussian
measure (due to the Central-Limit obstruction).

We will say that a function $J:[0,1]
\rightarrow \Real_+$ does not violate the Central-Limit obstruction if:
\begin{equation} \label{eq:obstruction0}
\exists D_0 < \infty \;\;\; \limsup_{t \rightarrow 0+} \frac{J(t)}{I_{(\Real,\abs{\cdot},\gamma)}(t)} \leq
D_0 ~.
\end{equation}
As pointed out to us by Sasha Sodin, even when $I_{(\Real,\abs{\cdot},\nu)}$ does not violate the obstruction, some badly behaved examples of (log-concave) measures $\nu$ have been constructed by Barthe \cite[Theorem 11]{BartheIsoperimetryUniformDistance}, which do not admit tensorization: $\liminf_{t \rightarrow 0+} \frac{I_{(\Real^2,\abs{\cdot},\nu^{\otimes 2})}}{I_{(\Real,\abs{\cdot},\nu)}} = 0$.
We will therefore need to impose some additional control over the isoperimetric profile. We will say that $J$ does not violate the Central-Limit obstruction \emph{with control rate $D$} if:
\begin{equation} \label{eq:obstruction}
\exists D < \infty \;\;\; 0<t \leq s \leq 1/2 \;\; \Rightarrow \;\;
\frac{J(t)}{I_{(\Real,\abs{\cdot},\gamma)}(t)} \leq D \frac{J(s)}{I_{(\Real,\abs{\cdot},\gamma)}(s)}
~.
\end{equation}
A function $f :[a,b] \rightarrow \Real_+$ for which there exists $C>1$ such that  $f(t) \leq C
f(s)$ for any $a \leq t \leq s \leq b$ ($a \leq s \leq t \leq b$) will be called \emph{essentially
non-decreasing (non-increasing)} (with constant $C$) on $[a,b]$.

\begin{thm} \label{thm:tensorization}
Let $J:[0,1] \rightarrow \Real_+$ denote an arbitrary continuous concave function vanishing at
$\set{0,1}$ and symmetric about the point $1/2$. Assume that (\ref{eq:obstruction}) holds, so
that the Central-Limit obstruction is not violated with control rate $D$. Let $(M,g)$ denote a
Riemannian manifold equipped with an absolutely continuous Borel probability measure $\mu$, and
assume that:
\begin{equation} \label{eq:tensor-assumption}
 I_{(M,g,\mu)}(t) \geq J(t) \;\;\; \forall t \in [0,1] ~.
\end{equation}
Then (without any additional convexity assumptions) there exists a constant
$c_D>0$ depending only on $D$, such that for any $k\geq 1$:
\[
I_{(M^{\times k},g^{\otimes k},\mu^{\otimes k})}(t) \geq c_D J(t) \;\;\; \forall t \in [0,1] ~.
\]
\end{thm}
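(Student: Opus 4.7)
The plan is to reduce the problem to the one-dimensional log-concave case via the combined use of Bobkov's characterization and Barthe's tensorization theorem, and then to move isoperimetric information through a Beckner-type functional inequality which admits dimension-free tensorization, converting back using the equivalences developed in Sections \ref{sec:capacities} and \ref{sec:general}.

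First, following Bobkov, I would construct an even absolutely continuous log-concave probability measure $\nu$ on $(\Real,\abs{\cdot})$ with continuous density so that $I_{(\Real,\abs{\cdot},\nu)} = J$ on $[0,1]$; such $\nu$ arises from solving the ODE $F'(x)=J(F(x))$ for the CDF $F$ of $\nu$, concavity of $J$ translating into log-concavity of the density. Barthe's Theorem \ref{thm:Barthe} then yields
\[
 I_{(M^{\times k},g^{\otimes k},\mu^{\otimes k})} \;\geq\; I_{(\Real^k,\abs{\cdot}^{\otimes k},\nu^{\otimes k})}, \quad k \geq 1,
\]
so it suffices to prove $I_{\nu^{\otimes k}}(t) \geq c_D J(t)$ on $[0,1/2]$, uniformly in $k$.

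Second, since $\nu$ is log-concave on $\Real$ our convexity assumptions are fulfilled (and remain so for every $\nu^{\otimes k}$). By Corollary \ref{cor:cap1}, $I_\nu \geq J$ is equivalent to $Cap_1^\nu(t,1/2)\geq J(t)$, and Proposition \ref{prop:increase-Orlicz-q} with $q_0=1,\ q=2$ gives
\[
Cap_2^\nu(t,1/2) \;\geq\; \brac{\int_t^{1/2}\frac{ds}{J^2(s)}}^{-1/2} \;=:\; F(t), \quad t \in [0,1/2].
\]
Setting $T(s) := F(t_s)/\sqrt{t_s}$, where $t_s$ is determined by $s=\log(1+1/t_s)$, rewrites this bound as $Cap_2^\nu(t,1/2) \geq \sqrt{t}\, T(\log(1+1/t))$, which is exactly the shape of the capacity inequality appearing in Theorem \ref{thm:cap2}. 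The main obstacle of the whole argument lies precisely here: verifying, possibly after replacing $T$ by an essentially equivalent minorant, that $T$ is non-decreasing and $T^2/s$ is non-increasing, with loss only of a constant depending on $D$. This is where the Central-Limit control hypothesis (\ref{eq:obstruction}) enters. Together with the concavity of $J$, it forces $J(t)/I_{(\Real,\abs{\cdot},\gamma)}(t)$ to be essentially monotone with constant $D$; an elementary computation involving the integral defining $F$ then shows that $F(t)/\sqrt{t}$ is essentially non-increasing and $F(t)^2/(t\log(1+1/t))$ is essentially non-decreasing in $t$, translating to the required monotonicities of $T$.

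Once $T$ is admissible, Theorem \ref{thm:cap2} converts the capacity bound into a Beckner-type inequality (\ref{eq:Beckner}) on $(\Real,\abs{\cdot},\nu)$. Such inequalities tensorize dimension-freely (see \cite{BCRHard}), and so the same inequality with the same constant holds on every product $(\Real^k,\abs{\cdot}^{\otimes k},\nu^{\otimes k})$. Reverse application of Theorem \ref{thm:cap2} produces $Cap_2^{\nu^{\otimes k}}(t,1/2) \geq c_D \sqrt{t}\, T(\log(1+1/t))$ for every $k\geq 1$. Since $\nu^{\otimes k}$ is log-concave our convexity assumptions hold, so Theorem \ref{thm:cap-iso} (with $q=2$ applied to the Young function whose adjoint reproduces the right-hand side, the admissible regularity of which is guaranteed by the monotonicities of $T$) returns us to an isoperimetric inequality $I_{\nu^{\otimes k}}(t) \geq c'_D J(t)$ on $[0,1/2]$. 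Combined with the initial Barthe reduction, this yields the desired conclusion with a constant depending only on $D$.
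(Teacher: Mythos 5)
Your overall route is the paper's: Bobkov's characterization plus Barthe's Theorem \ref{thm:Barthe} reduce everything to an even log-concave measure $\nu$ on $\Real$ with $I_{(\Real,\abs{\cdot},\nu)}=J$; Corollary \ref{cor:cap1} and Proposition \ref{prop:increase-Orlicz-q} (with $q_0=1$, $q=2$) turn the isoperimetric hypothesis into a $2$-capacity bound; Theorem \ref{thm:cap2} converts that into a Beckner-type inequality, which tensorizes; and since $\nu^{\otimes k}$ is log-concave one converts back to isoperimetry via Theorem \ref{thm:cap-iso}. The gap is precisely in the step you single out as the main obstacle, and your sketch of it fails as stated. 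With $F(t)=\brac{\int_t^{1/2} ds/J(s)^2}^{-1/2}$ you have $F(t)\to\infty$ as $t\to 1/2^-$, so $F(t)/\sqrt{t}$ is \emph{not} essentially non-increasing on $(0,1/2]$, and the induced $T$ blows up at the left end of its domain, hence cannot be (even essentially) non-decreasing; no minorant ``essentially equivalent'' to this $F$ can repair that, because the blow-up is genuine. The paper's remedy is to first extend $J$ linearly beyond $1/2$ (the functions $J_1$ and the $D$-controlled minorant $J_0=g\,I_0$) and to define $N^\wedge(t)=\brac{\int_t^{\infty} ds/J_1(s)^2}^{-1/2}$; one must then prove (Lemma \ref{lem:last-thing}, which is exactly where the control rate $D$ from (\ref{eq:obstruction}) is used quantitatively) that this modified function is still comparable, up to $D$-dependent constants, to $J(t)/\sqrt{t}$ on $[0,1/2]$, so that nothing is lost in the final conclusion.

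Second, essential monotonicity is not sufficient at either conversion. In Theorem \ref{thm:cap2} the hypothesis that $T$ is non-decreasing is kept \emph{exact} in the paper: it follows from Lemma \ref{lem:big-lemma} applied to $N_1=J_1^\wedge$, $p_1=\infty$, $p_2=p_3=2$, using only the concavity of $J$ (i.e.\ $J_1(t)/t$ non-increasing), which simultaneously shows that $N$ is a Young function with $N(t)^{1/2}/t$ non-decreasing; only the condition that $T(t)^2/t$ be non-increasing is relaxed to ``essentially'', and even that requires inspecting Lemma 8 of \cite{BCRHard} to see the relaxation is harmless. More importantly, for the return trip the direction $(1)\Rightarrow(2)$ of Theorem \ref{thm:cap-iso} (through Proposition \ref{prop:Capq-Lq}) needs $N(t)^{1/2}/t$ genuinely non-decreasing and $N$ Young (or $N(t^\alpha)/t$ non-increasing); a function that has these properties only up to constants is not admissible there, so you must either build them exactly (as Lemma \ref{lem:big-lemma} does) or replace $N$ by a comparable exactly-admissible function, and your proposal supplies neither. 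With these repairs --- the linear extension $J_1$, the exact regularity from Lemma \ref{lem:big-lemma}, the $D$-comparability of Lemma \ref{lem:last-thing}, and the noted relaxation in \cite{BCRHard} --- your argument becomes the paper's proof.
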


The case $J(t)= c t$, $t\in[0,\frac{1}{2}]$, was settled by Bobkov
and Houdr\'e \cite{BobkovHoudre}, and should be interpreted as
stating that Cheeger's isoperimetric inequality is preserved (up to
a constant) under tensorization. The case $J(t) = c
I_{(\Real,\abs{\cdot},\gamma)}$ follows from the classical
isoperimetric inequality for the $k$-dimensional Gaussian measure
$\gamma_k$ due to Sudakov--Tsirelson \cite{SudakovTsirelson} and
independently Borell \cite{Borell-GaussianIsoperimetry}, stating
that $I_{(\Real^k,\abs{\cdot},\gamma_k)} =
I_{(\Real,\abs{\cdot},\gamma)}$, together with an application of
Barthe's Theorem \ref{thm:Barthe} (see also
\cite{BartheMaureyIsoperimetricInqs} for an alternative derivation).
Interpolating between these two extremes is the result of
Barthe--Cattiaux--Roberto in \cite{BCRHard}, who treated the case $J
= I_{(\Real,\abs{\cdot},\mu_\alpha)}$ where $d\mu_\alpha = Z_\alpha
\exp(-|x|^\alpha) dx$ and $\alpha \in [1,2]$, in which case $J(t)
\simeq t \log^{1-1/\alpha}(1+\frac{1}{t})$ uniformly on $t \in
[0,1/2]$ and $\alpha \in [1,2]$.
The more general case
when $J(t) = I_{(\Real,\abs{\cdot},\mu_\Phi)}$ and $d\mu_{\Phi} = \exp(-\Phi(|x|)) dx$ with $\Phi : \Real_+ \rightarrow \Real_+$ convex, so that $\sqrt{\Phi}$ is concave and $\Phi$ is $C^2$ at a neighborhood of infinity, was treated by these authors in \cite{BCRSoft}. One may show that in this case, $c_{1,\Phi} \leq J(t) / I_\Phi(t) \leq c_{2,\Phi}$ uniformly on $t \in [0,1/2]$, where $I_\Phi(t) = t \Phi' \circ \Phi^{-1}(\log (1+\frac{1}{t}))$ and $c_{i,\Phi}$ depend solely on $\Phi$. It is easy to check that (\ref{eq:obstruction}) is satisfied with $D \leq C$ in the first case and $D \leq C_\Phi$ in the second, where $C \geq 1$ is a universal constant and $C_\Phi \geq 1$ depends solely on $\Phi$ (in fact, if we replace $I_{(\Real,\abs{\cdot},\gamma)}$ in
(\ref{eq:obstruction}) by the equivalent $t \log^{1/2}(1+\frac{1}{t})$, then $\sqrt{\Phi}$ is concave iff $J=I_\Phi$ satisfies the modified (\ref{eq:obstruction}) with $D=1$).

Our formulation of Theorem \ref{thm:tensorization} using the condition (\ref{eq:obstruction}), without refering to an auxiliary profile $I_\nu$ where $\nu$ is some 1-dimensional density, seems more natural than previous requirements, and this will also be evident in the proof. As mentioned in the Introduction, it seems possible to produce a proof of this theorem using the approach of \cite{BCRSoft}, but the main obstacle would be to pass from the isoperimetric inequality $I(t) \geq J(t)$ to the appropriate $2$-capacity inequality, using only $J$ and without passing via the auxiliary density $\nu$ (compare with Theorem 7 and Propositions 9,13 in \cite{BCRSoft}), which would otherwise result in requiring some additional technical assumptions and in the constant $c_D$ to depend on $J$. On the other hand, without any further technical assumptions,
Theorem \ref{thm:tensorization} basically follows from the argument used to derive Theorem \ref{thm:cap-iso}
coupled with Theorems \ref{thm:cap2} and \ref{thm:Barthe}. To make this precise we will need to be slightly more careful.

\begin{proof}[Proof of Theorem \ref{thm:tensorization}]
By a result of Sergey Bobkov \cite{BobkovExtremalHalfSpaces}, the map $\nu \mapsto
I_{(\Real,\abs{\cdot},\nu)}$ is a one-to-one correspondence between even log-concave probability
measures $\nu$ with continuous density on $\Real$ and concave functions $J$ as in the theorem
(without the assumption (\ref{eq:obstruction})). Therefore, there exists a measure $\nu_0$ on
$\Real$ as above such that $I_{(\Real,\abs{\cdot},\nu_0)} = J$. We will prove the assertion for the
case that $(M,g,\mu)$ is $(\Real,\abs{\cdot},\nu_0)$, the general case will then follow from
Barthe's Theorem \ref{thm:Barthe}. We only use the fact that $\nu_0$ is log-concave, and work directly with its isoperimetric profile $J$. By approximating $J$ if necessary, we can always assume that $J$
is strictly increasing on $[0,1/2]$.

Let $I_0 : \Real_+ \rightarrow \Real_+$ be defined as:
\[
 I_0(t) = t \log^{1/2} (1+1/t) ~.
\]
As mentioned in the Introduction, it is known that $I_{(\Real,\abs{\cdot},\gamma)} \simeq I_0$ on $[0,1/2]$, so we may assume that (\ref{eq:obstruction}) holds with $I_{(\Real,\abs{\cdot},\gamma)}$ replaced by $I_0$.

Denote $g(t) = \min_{s \in [t,1/2]} J(s)/I_0(s)$ for $t \in [0,1/2]$. Clearly $g(1/2) =
J(1/2)/I_0(1/2)$, $g$ is non-decreasing and $g \leq J/I_0 \leq D g$ on $[0,1/2]$. Now denote:
\[
 J_1(t) = \begin{cases} J(t) & t \in [0,1/2] \\ 2 J(1/2) t & t \in [1/2,\infty) \end{cases} \quad ,
\quad
J_0(t) = \begin{cases} g(t) I_0(t) & t \in [0,1/2] \\ 2 J(1/2) t & t \in [1/2,\infty)
\end{cases} ~.
\]
Clearly $J_0, J_1 \in \J$ and $J_0 \leq J_1 \leq D J_0$. Since $I_0(t) / t^{1/2}$ increases on
$[0,1/2]$, 
$t/I_0(t)$ increases on $\Real_+$, and since $J$ is
concave and hence $J(t) / t$ is non-increasing on $[0,1/2]$, we have the following elementary facts:

\begin{enumerate}
\item  $J_0(t)/ t^{1/2}$ is increasing and $J_1(t)/ t^{1/2}$ is
essentially non-decreasing on $\Real_+$.
\item $J_1(t)/t$ is non-increasing and $J_0(t)/t$ is essentially non-increasing on
$\Real_+$.
\item $J_0(t)/I_0(t)$ is non-decreasing and $J_1(t)/I_0(t)$ is
essentially non-decreasing on $\Real_+$.
\end{enumerate}

Next, let $N \in \J$ denote the function so that:
\[
 N^\wedge(t) = \frac{1}{\brac{\int_t^\infty \frac{ds}{J_1(s)^2}}^{1/2}} ~.
\]
Indeed, Fact 3 implies that $N^\wedge(0) = 0$, and together with the linear growth of $J_1$ at
infinity, this means that  $N^\wedge \in \J$ and hence $N \in \J$. We now apply Lemma
\ref{lem:big-lemma} with $N_1 = J_1^\wedge$ and $p_1=\infty, p_2=p_3=2$. The appeal to Lemma
\ref{lem:big-lemma} is legitimate since $N^\wedge \in \J$ and since $J_1^\wedge(t)/t$ is
non-decreasing by Fact 2. We deduce that:
\begin{equation} \label{eq:N-prop}
\text{$N$ is a Young function and $N(t)^{1/2}/t$ is non-decreasing.}
\end{equation}
In addition, Facts 2 and 3 provide the following estimates:
\begin{equation} \label{eq:last-bounds}
 \frac{t}{s} \leq \frac{J_1(t)}{J_1(s)} \leq D \frac{I_0(t)}{I_0(s)} \;\;\; \forall \; 0 < t \leq s
< \infty ~,
\end{equation}
and an elementary computation provided in Lemma \ref{lem:last-thing} below implies that:
\begin{equation} \label{eq:last-thing}
 N^\wedge(t) \simeq_D \frac{J_1(t)}{\sqrt{t}} \simeq_D \frac{J_0(t)}{\sqrt{t}} 
\;\;\; \forall t \in [0,1] ~,
\end{equation}
with $\simeq_D$ meaning that the bounds depend on $D$.

Our assumption (\ref{eq:tensor-assumption}) on the space $(\Real,\abs{\cdot},\nu_0)$ implies by
Corollary \ref{cor:cap1} that:
\[
 Cap_1(t,1/2) \geq J(t) \;\;\; \forall t \in [0,1/2] ~.
\]
Proposition \ref{prop:increase-Orlicz-q} (with $q_0=1,q=q$)
implies that for all $t \in [0,1/2]$:
\begin{equation} \label{eq:cap2-mine}
 Cap_2(t,1/2) \geq \frac{1}{\brac{\int_t^{1/2} \frac{ds}{J(s)^2} }^{1/2}} \geq
\frac{1}{\brac{\int_t^{\infty} \frac{ds}{J_1(s)^2} }^{1/2}} = N^\wedge(t) ~.
\end{equation}

Now let $T : \Real_+ \rightarrow \Real_+$ denote the function satisfying:
\[
N^\wedge(t) = t^{1/2} T(\log(1+1/t)) ~,
\]
and rewrite (\ref{eq:cap2-mine}) as:
\begin{equation} \label{eq:cap2-BCR}
 Cap_2(t,1/2) \geq N^\wedge(t) = t^{1/2} T(\log(1+1/t)) \;\;\; \forall t \in [0,1/2]
\end{equation}
which holds for the space $(\Real,\abs{\cdot},\nu_0)$. By (\ref{eq:N-prop}), $N^\wedge(t^2)/t$
is non-increasing, and so $T$ is non-decreasing. Moreover, $T(x) / x^{1/2}$ is essentially
non-increasing (with a constant depending on $D$) on $[1,\infty)$, since this is equivalent to
showing that:
\[
 \frac{T(\log(1+1/t))}{\log^{1/2}(1+1/t)} = \frac{t^{1/2} N^\wedge(t)}{I_0(t)}
\]
is essentially non-decreasing (with the same constant) on $[0,1/(e-1)]$. The latter follows from
(\ref{eq:last-thing}) and Fact 3 on $J_0$.

Now all the conditions of Theorem \ref{thm:cap2} are fulfilled, except for the requirement that
$T(x)^2/x$ is properly non-increasing, but a quick look at Lemma 8 in \cite{BCRHard} reveals that
this may be relaxed to \emph{essentially} non-increasing, at a price of changing the explicit bounds
in the conclusion of Theorem \ref{thm:cap2}. We therefore deduce that
(\ref{eq:cap2-BCR}) is equivalent (up to a constant depending on $D$) to the Beckner-type inequality
(\ref{eq:Beckner}) which admits tensorization, and therefore also holds for the product structure
$(\Real^k,\abs{\cdot}^{\otimes k},\nu^{\otimes k})$. Applying Theorem \ref{thm:cap2}
again, we deduce that:
\[
 Cap_2(t,1/2) \geq c_D t^{1/2} T(\log(1+1/t)) = c_D N^\wedge(t) \;\;\; \forall t \in [0,1/2]
\]
holds for the product structure as well. Using
(\ref{eq:N-prop}) and our convexity assumptions (since the product measure $\nu^{\otimes k}$ is
log-concave on $\Real^k$), we may apply the direction $(1) \Rightarrow (2)$ of Theorem
\ref{thm:cap-iso}, and deduce that:
\[
 \tilde{I}_{(\Real^k,\abs{\cdot}^{\otimes k},\nu^{\otimes k})}(t) \geq c'_D t^{1/2} N^\wedge(t)
 \;\;\; \forall t \in [0,1/2] ~,
\]
which by (\ref{eq:last-thing}) implies:
\[
 \tilde{I}_{(\Real^k,\abs{\cdot}^{\otimes k},\nu^{\otimes k})}(t) \geq c''_D J(t) \;\;\; \forall t
\in [0,1/2] ~.
\]
This concludes the proof, up to the proof of Lemma \ref{lem:last-thing} below.
\end{proof}

\begin{lem} \label{lem:last-thing}
The estimate (\ref{eq:last-thing}) holds.
\end{lem}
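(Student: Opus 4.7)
The plan is to translate the claim into an estimate on the integral defining $N^\wedge$, and then to sandwich that integral between two multiples of $t/J_1(t)^2$ using the three elementary Facts about $J_0, J_1$. Concretely, since $N^\wedge(t)^2 = 1/\int_t^\infty ds/J_1(s)^2$, the equivalence $N^\wedge(t) \simeq_D J_1(t)/\sqrt{t}$ is equivalent to
\[
\int_t^\infty \frac{ds}{J_1(s)^2} \simeq_D \frac{t}{J_1(t)^2} \quad \forall t \in (0,1].
\]
The second equivalence $J_1(t)/\sqrt{t} \simeq_D J_0(t)/\sqrt{t}$ on $[0,1]$ is then immediate from $J_0 \leq J_1 \leq D J_0$.

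The lower bound $\int_t^\infty ds/J_1(s)^2 \gtrsim t/J_1(t)^2$ is easy: Fact 2 gives that $J_1(s)/s$ is non-increasing, so $J_1(s) \leq (s/t)J_1(t)$ for $s \geq t$, hence $1/J_1(s)^2 \geq t^2/(s^2 J_1(t)^2)$, and
\[
\int_t^\infty \frac{ds}{J_1(s)^2} \geq \int_t^{2t} \frac{t^2 \, ds}{s^2 J_1(t)^2} = \frac{t}{2 J_1(t)^2}.
\]

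For the upper bound, I would split the integral as $\int_t^{1/2} + \int_{1/2}^\infty$. On $[1/2,\infty)$, $J_1(s) = 2J(1/2)\,s$, so the tail integral equals $1/(2J(1/2)^2)$. Fact 1 says $J_1(t)/\sqrt{t}$ is essentially non-decreasing, so $J_1(t)^2/t \leq C\,J(1/2)^2$ on $(0,1/2]$, giving the tail bound $\leq C'\, t/J_1(t)^2$. On $[t,1/2]$, I use (\ref{eq:last-bounds}) in the form $J_1(s) \geq (I_0(s)/I_0(t)) J_1(t)/D$ to get
\[
\int_t^{1/2} \frac{ds}{J_1(s)^2} \leq \frac{D^2 I_0(t)^2}{J_1(t)^2}\int_t^{1/2}\frac{ds}{I_0(s)^2}.
\]
So the only genuine computation left is showing the universal estimate $\int_t^{1/2} ds/I_0(s)^2 \simeq t/I_0(t)^2$ for $t \in (0,1/2]$, where $I_0(s) = s\log^{1/2}(1+1/s)$.

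This last step is the main obstacle, but it is a standard exponential-integral asymptotic. After the substitution $u = 1/s$ it becomes $\int_2^{1/t} du/\log(1+u)$, which I would estimate by dyadic decomposition: on each interval $[2^j,2^{j+1}]$ with $2 \leq 2^j \leq 1/t$ the integrand is comparable to $1/\log(2^j) = 1/(j \log 2)$, so the partial sums $\sum 2^j/j$ are dominated by the last term and the integral is $\simeq (1/t)/\log(1/t) \simeq 1/(t\log(1+1/t)) = t/I_0(t)^2$. Plugging this back shows the upper bound $\int_t^{1/2} ds/J_1(s)^2 \leq C D^2\, t/J_1(t)^2$, completing the proof.
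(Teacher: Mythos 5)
Your proposal is correct and takes essentially the same route as the paper's proof: the same reduction to showing $\int_t^\infty ds/J_1(s)^2 \simeq_D t/J_1(t)^2$, the same lower bound from $J_1(s) \leq (s/t)J_1(t)$, and the same split at $1/2$ with the tail controlled via the linear growth of $J_1$ and Fact 1, and the head via the upper bound in (\ref{eq:last-bounds}); the only differences are cosmetic (you estimate $\int_t^{1/2} ds/(s^2\log(1+1/s))$ by dyadic blocks where the paper substitutes $v=\log(1+1/s)$, and you should add the one-line trivial case $t\in(1/2,1]$, where $J_1$ is linear and the integral is computed exactly). Just note that the constants arising from the essential monotonicity in Facts 1 and 3 depend on $D$ rather than being universal, which is harmless since the conclusion is only claimed with $D$-dependent factors.
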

\begin{proof}
We will show that:
\[
 1 \leq \brac{\int_t^\infty \frac{J_1(t)^2 ds}{J_1(s)^2 t}}^{1/2} \leq C D \;\;\; \forall t \in
[0,1] ~,
\]
for some constant $C>0$, which will conclude the proof.

The lower bound is immediate from the lower bound in
(\ref{eq:last-bounds}). For the upper bound, we decompose the integral into two parts:
\[
 \int_t^{1/2} \frac{J_1(t)^2 ds}{J_1(s)^2 t} + \int_{1/2}^\infty \frac{J_1(t)^2 ds}{J_1(s)^2 t} ~,
\]
with the first one interpreted as $0$ if $t> 1/2$. By Fact 1 and the definition of $J_1$, the second
integral can be estimated for $t \in [0,1]$ by:
\[
 \int_{1/2}^\infty \frac{J_1(t)^2 ds}{J_1(s)^2 t} \leq D^2 \int_{1/2}^\infty \frac{J_1(1)^2 ds}{4
J_1(1/2)^2 s^2} = 2 D^2 ~.
\]
To estimate the first integral for $t\in[0,1/2]$, we use the upper bound in (\ref{eq:last-bounds})
and the change of variables $v = \log(1+1/s)$:
\begin{eqnarray*}
\int_t^{1/2} \frac{J_1(t)^2 ds}{J_1(s)^2 t} &\leq& D^2 \int_t^{1/2} \frac{t \log(1+1/t) ds}{s^2
\log(1+1/s)}
= D^2 t \log(1+1/t) \int_{\log 3}^{\log(1+1/t)} \frac{\exp(v)}{v} dv \\
&\leq& C D^2 t \log(1+1/t) \frac{1+1/t}{\log(1+1/t)} \leq C' D^2 ~.
\end{eqnarray*}
This concludes the proof.
\end{proof}

\begin{rem}
In fact, by inspecting the bound given by Theorem 9 in
\cite{BCRHard} more carefully, one can repeat our argument for an
arbitrary isoperimetric profile $J$ (perhaps violating the
Central-Limit obstruction), and study what happens to the profile
under tensorization. We leave this for another note.
\end{rem}


\section{Approximation Argument} \label{sec:approx}

Recall that $\mu_m$ is said to converge to $\mu$ in total-variation if:
\[
\lim_{m \rightarrow \infty} \sup_{A \subset \Omega} \abs{\mu_m(A) - \mu(A)} = 0 ~.
\]
$\mu_m$ is said to converge to $\mu$ weakly if $\lim_{m \rightarrow \infty} \int f d\mu_m = \int f d\mu$ for any bounded continuous Borel function $f : \Omega \rightarrow \Real$.

In this section, we provide a careful approximation argument for deducing that our results from Section \ref{sec:semi-group} hold under arbitrary convexity assumptions, without requiring any further \emph{smoothness} conditions (as defined in the Introduction or more generally in Section \ref{sec:semi-group} and Remark \ref{rem:general-smoothness}). We recall that at this point, the proof of Theorem \ref{thm:Orlicz-12} is only valid under the additional smoothness conditions. We emphasize that this is not just a technical matter, and that our convexity assumptions will need to be invoked once again. To explain this better, let us describe a naive approximation approach which completely fails. Suppose that $(\Omega,d,\mu)$ satisfies a $(N,q)$ Orlicz-Sobolev inequality as in the assumption of Theorem \ref{thm:Orlicz-12}, and we would like to deduce from this the conclusion of this theorem, assuming that $(\Omega,d,\mu)$ satisfies our convexity assumptions. By definition, we know that there exists a sequence $\set{\mu_m}$ which approximates $\mu$ in total-variation, such that $(\Omega,d,\mu_m)$ satisfy our smooth convexity assumptions, and so the proof of Theorem \ref{thm:Orlicz-12} applies to these spaces. One may hope that since $\mu_m$ approximate $\mu$, the spaces $(\Omega,d,\mu_m)$ will also satisfy the $(N,q)$ Orlicz-Sobolev inequality (perhaps with a worse constant), allowing us to apply Theorem \ref{thm:Orlicz-12}.
Unfortunately, this is completely false in general. For instance, consider the measures $\mu_m$ which are uniform on the set $[0,1] \setminus [1/2-1/m,1/2+1/m]$, and converge to $\mu$, the uniform measure on $[0,1]$. Clearly, the spaces $(\Omega,d,\mu_m)$ ($m \geq 3$) do not satisfy any $(N,q)$ Orlicz-Sobolev inequality, whereas in the limit the space $(\Omega,d,\mu)$ will satisfy any reasonable inequality (Poincar\'e, log-Sobolev, etc.). We conclude that a different approach is needed.

\medskip

Our strategy in this section will be to show that the semi-group estimates of Section \ref{sec:semi-group} can be transferred to a setting without any smoothness assumptions. Our original argument, which at first relied on a method of weak-convergence due to Williams and Zheng \cite{WilliamsZheng} (see also Burdzy and Chen \cite{BurdzyChenWeakConvergence}), has been replaced by an elementary argument which we provide below. We continue with the notations used in Section \ref{sec:semi-group}, and recall the following definition:

\begin{dfn*}
A domain $\Omega \subset (M,g)$ is said to be locally convex, if all geodesics in $M$ tangent
to $\partial \Omega$ are locally outside of $\Omega$. By a result of Bishop
\cite{BishopInBayleRosales}, in case that $\Omega$ has $C^2$ boundary, this is equivalent to
requiring that the second fundamental form of $\partial \Omega$ with respect to the normal pointing
into $\Omega$ be positive semi-definite on all of $\partial \Omega$.
\end{dfn*}

Assume that $\mu_m$ converges in total-variation (and in particular, weakly) to an absolutely continuous probability measure $\mu$, so that $(\Omega,d,\mu_m)$ satisfy our smooth convexity assumptions. In other words, $d\mu_m = \exp(-\psi_m) dvol_M|_{\Omega_m}$ where $\Omega_m \subset M$ is a locally convex domain with $C^2$ boundary, $\psi_m \in C^2(\overline{\Omega}_m,\Real)$ and $Ric_g + Hess_g \psi_m \geq 0$ on $\Omega_m$.  Assume that $\mu$ is supported on a domain $\Omega$, and write $d\mu = \exp(-\psi) dvol_M|_{\Omega}$ for some Borel function $\psi : \Omega \rightarrow \Real$ (defined almost-everywhere on $\Omega$). By passing to an appropriate subsequence, we can assume that $\psi_m$ converges almost-everywhere on $M$ to $\psi$ (we implicitly extend the definition of these functions to $+\infty$ outside their original domains of definition).

\begin{lem} \label{lem:psi-ae}
One may always choose a version of $\psi$ which is locally Lipschitz on $\Omega$.
\end{lem}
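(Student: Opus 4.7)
The plan is to show that $\psi$ admits a locally Lipschitz representative on $\Omega$, by establishing uniform local Lipschitz bounds on the $\psi_m$ and passing to the a.e.\ limit. The regularity will come from the Bakry--\'Emery condition $Ric_g + Hess_g \psi_m \geq 0$, which forces each $\psi_m$ to be \emph{$\Lambda$-semi-convex} along geodesics lying in $\Omega_m$ on compact subsets (where $\Lambda$ depends on local lower bounds for $Ric_g$); this in turn yields local Lipschitz estimates via a standard convex-analysis computation.

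Fix $p \in \Omega$ and choose a relatively compact geodesically convex neighborhood $V$ of $p$ with $\overline V \subset \Omega$. Since $Ric_g$ is continuous and $\overline V$ is compact, there exists $\Lambda \geq 0$ with $Ric_g \geq -\Lambda g$ on $\overline V$, so $Hess_g \psi_m \geq -\Lambda g$ on $\overline V \cap \Omega_m$. A preliminary step is to establish that $\overline V \subset \Omega_m$ for all sufficiently large $m$: the a.e.\ pointwise convergence $\psi_m \to \psi$ together with finiteness of $\psi$ on $\Omega$ implies $vol_M(\overline V \setminus \Omega_m) \to 0$ by dominated convergence, and one then uses the locally convex $C^2$ structure of $\partial \Omega_m$ to rule out persistent incursions of $M \setminus \Omega_m$ into $\overline V$. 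This is the main technical obstacle of the argument.

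Once $\overline V \subset \Omega_m$ for large $m$, I would use the uniform $\Lambda$-semi-convexity of $\psi_m$ on $\overline V$ together with the a.e.\ convergence to derive uniform upper bounds on any $V' \subset\subset V$. By Egorov's theorem applied to $\psi_m \to \psi$, for any $\eta > 0$ there is a set $K_\eta \subset \overline V$ with $vol_M(\overline V \setminus K_\eta) < \eta$ on which $\psi_m$ converges uniformly and hence is eventually uniformly bounded (as $\psi$ is finite on $K_\eta \subset \Omega$). For $V' \subset\subset V$ and $\eta$ chosen small enough, every $x \in V'$ lies on a minimizing geodesic segment in $V$ with both endpoints in $K_\eta$, so the semi-convexity inequality $\psi_m(\gamma(t)) \leq (1-t)\psi_m(\gamma(0)) + t\psi_m(\gamma(1)) + \Lambda L(\gamma)^2 t(1-t)/2$ propagates the uniform upper bound on $K_\eta$ to a uniform upper bound on $V'$. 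Combined with $\Lambda$-semi-convexity, the classical fact that a semi-convex function which is bounded above on an open set is locally Lipschitz there (in particular locally bounded below), applied in normal coordinates to the locally convex function $\psi_m + \Lambda d(\cdot,p)^2/2$, yields uniform local Lipschitz bounds for $\psi_m$ on any compact subset of $V'$.

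By Arzel\`a--Ascoli, a subsequence $\psi_{m_k}$ converges locally uniformly on $V'$ to a locally Lipschitz function $\widetilde\psi$, which must agree with $\psi$ a.e.\ on $V'$ by the a.e.\ convergence hypothesis. Redefining $\psi$ to be $\widetilde\psi$ on $V'$ and exhausting $\Omega$ by a countable cover of such neighborhoods, with a standard diagonal extraction to ensure consistency on overlaps, produces the desired locally Lipschitz version of $\psi$ on $\Omega$.
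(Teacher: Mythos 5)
Your proposal is correct in outline and rests on the same two pillars as the paper's proof: (i) on a small geodesically convex neighborhood $V$ with $\overline{V}\subset\Omega$, a compactness bound $Ric_g\geq -\Lambda g$ turns the $CD(0,\infty)$ hypothesis into uniform $\Lambda$-semi-convexity of the $\psi_m$, once one knows $V\subset\Omega_m$ for large $m$; (ii) convexity-type regularity then survives the almost-everywhere limit and yields a locally Lipschitz representative, patched over a countable cover. Where you differ is in the limiting mechanism: the paper adds the explicit smooth function $h_{x_0}=\frac{\max(R_{x_0,r},0)}{A_{x_0}}d(x_0,\cdot)^2$ to make $\psi_m+h_{x_0}$ genuinely geodesically convex on a convex embedded ball and then invokes the Riemannian analogue of Rockafellar's Theorem 10.8 (pointwise convergence of convex functions on a dense subset forces convergence everywhere to a convex, hence locally Lipschitz, limit), so no uniform a priori bounds are ever needed; you instead manufacture uniform local Lipschitz estimates by hand (Egorov to get a large set $K_\eta$ of uniform boundedness, propagation of upper bounds along geodesics with endpoints in $K_\eta$, the quantitative Lipschitz estimate for semi-convex functions, then Arzel\`a--Ascoli). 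Both routes work; the paper's is shorter because the convex-convergence theorem absorbs all the compactness, while yours is more self-contained and quantitative.

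Two places in your sketch deserve tightening. First, ``every $x\in V'$ lies on a minimizing geodesic in $V$ with both endpoints in $K_\eta$'' is not automatic from $vol(\overline{V}\setminus K_\eta)<\eta$, since the pairs of endpoints aligned with a fixed $x$ form a null set in $V\times V$; it does hold for small $\eta$, but via a reflection-through-$x$ argument (the geodesic point-reflection has bounded Jacobian on small balls, so for small $\eta$ one finds $a\in K_\eta$ whose reflection through $x$ also lies in $K_\eta$), and similarly Egorov only gives uniform bounds on $K_\eta$ after intersecting with a sublevel set $\set{\abs{\psi}\leq L}$, since $\psi$ is merely a.e.\ finite. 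Second, the inclusion $\overline{V}\subset\Omega_m$ for large $m$, which you defer as ``the main technical obstacle'': the paper disposes of it in one line (a.e.\ convergence plus convexity of the domains), and your suggested route can indeed be completed, e.g.\ by working in a slightly larger ball $V^+$ with $\overline{V^+}\subset\Omega$, noting that a point of $\overline{V}\setminus\Omega_m$ together with local convexity of $\partial\Omega_m$ excludes from $\Omega_m$ a region of volume bounded below inside $V^+$, contradicting $vol(V^+\setminus\Omega_m)\to 0$ (or $\mu(V^+\setminus\Omega_m)\to 0$ from total-variation convergence). So there is no wrong step, only two assertions that need the short arguments above to be complete.
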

\begin{proof}
Let $A \subset M$ denote the Borel subset of points $x \in M$ for which the sequence $\psi_m(x)$ converges to $\psi(x)$ (in the wide sense). We know that $vol_M(\Omega \setminus A) = 0$. We will show that for each $x_0 \in \Omega$, there exists a neighborhood $N_{x_0} \subset \Omega$ and a constant $C_{x_0}>0$, so that:
\begin{equation} \label{eq:lip-to-show}
\forall x,y \in N_{x_0} \cap A \;\;\; \abs{\psi(x) - \psi(y)} \leq C_{x_0} d(x,y) ~.
\end{equation}
Consequently, it will follow that one may extend $\psi$ by continuity from $A \cap \Omega$ to the entire $\Omega$, defining $\psi(z_0)$ for $z_0 \in \Omega \setminus A$ as $\psi(z_0) = \lim_{z \rightarrow z_0 , z \in A} \psi(z)$. The estimate (\ref{eq:lip-to-show}) will imply that this limit is well defined and that the resulting $\psi$ satisfies the same locally Lipschitz condition.

To deduce (\ref{eq:lip-to-show}), we will show that for any $x_0 \in \Omega$, there exists a geodesically convex neighborhood $N_{x_0} \subset \Omega$ of $x_0$, such that on $N_{x_0} \cap A$, $\psi$ coincides with a semi-convex function $\psi_{x_0} : N_{x_0} \rightarrow \Real$. By this we mean that there exists a smooth function $h_{x_0}$ on $N_{x_0}$ so that $\psi_{x_0} + h_{x_0} : N_{x_0} \rightarrow \Real$ is geodesically convex in $N_{x_0}$, meaning that it is convex on each geodesic in $N_{x_0}$. It is known (e.g. \cite[p. 642]{GreeneWuConvexApproximations}, \cite[(2.2)]{BangertGeneralizeAleksandrov}, \cite[Theorem 1.5.1]{Schneider-Book}) that geodesically convex functions are locally Lipschitz in the interior of the domain where they are finite, from which the same conclusion will hold for $\psi_{x_0}$, and consequently (\ref{eq:lip-to-show}) will hold for $\psi$.
In what follows, we refer to Greene and Wu \cite{GreeneWuConvexApproximations}, Bangert \cite{BangertGeneralizeAleksandrov}, Cordero-Erausquin, McCann and Schmuckenschl{\"a}ger \cite{CMSInventiones}, and the references therein, for further information on geodesically convex and semi-convex functions.

It is known that for any $x_0 \in M$, the geodesic open ball $B(x_0,r)$ for small enough $r>0$ is convex embedded in $M$, in the sense that it is both geodesically convex and that the exponential map $\exp_{x_0} : B_{T_{x_0} M}(0,r) \rightarrow B(x_0,r)$ is a diffeomorphism between $B_{T_{x_0} M}(0,r) \subset T_{x_0} M$ and $B(x_0,r) \subset M$. We will therefore choose our neighborhood $N_{x_0}$ to be a convex embedded ball $B(x_0,r)$, so that in addition $\overline{B(x_0,r)}$ is contained in $\Omega$. Since $\psi_m$ converge to $\psi$ almost everywhere, it is clear that if $\overline{B(x_0,r)} \subset \Omega$ then $B(x_0,r)$ must be contained in $\Omega_m$ for all $m \geq m_{x_0}$ (we could add ``apart from a subset of zero measure'' for safety, but this is in fact not necessary due to the convexity of the domains).

Choosing $r>0$ small enough, it is known (e.g. \cite[p. 643]{GreeneWuConvexApproximations}, \cite[p. 311]{BangertGeneralizeAleksandrov}) that $f_{x_0} := d(x_0,\cdot)^2$ is a $C^\infty$ function on $B(x_0,r)$ whose Riemannian Hessian satisfies $Hess_g f_{x_0} \geq A_{x_0} g$ on $B(x_0,r)$ for some $A_{x_0} > 0$.
Denoting:
\[
R_{x_0,r} := \max \set{Ric_g(v,v) ; v \in T_x M , g(v,v) = 1 , x \in \overline{B(x_0,r)} },
\]
we define $h_{x_0} = \frac{\max(R_{x_0,r},0)}{A_{x_0}} f_{x_0}$.

Since for all $m \geq m_{x_0}$, $Ric_g + Hess_g \psi_m \geq 0$ on $B(x_0,r)$, it follows from the above construction that $Hess_g (\psi_m + h_{x_0}) \geq 0$ on $B(x_0,r)$. Since $\psi_m + h_{x_0} \in C^2(B(x_0,r))$, it is known (\cite[p. 310]{BangertGeneralizeAleksandrov}) that this is equivalent to being geodesically convex in $B(x_0,r)$.
We now employ \cite[Theorem 10.8]{RockafellarBook}, whose proof easily passes to the Riemannian setting (taking into account a slight modification provided in \cite[Lemma 2.1]{BangertGeneralizeAleksandrov} of a Euclidean argument). This theorem asserts that if a sequence of geodesically convex functions $\set{f_i}$ pointwise converges on a dense subset of a geodesically convex open set $N$ (to a finite value in each point of the subset), then the pointwise limit in fact exists for each $x \in N$, and the function $f(x) := \lim_{i \rightarrow \infty} f_i(x)$ is finite and geodesically convex on $N$. Since $\psi_m + h_{x_0}$ converges to the finite function $\psi + h_{x_0}$ on $B(x_0,r) \cap A$, it follows that in fact $\psi_m + h_{x_0}$ converges to a geodesically convex function on the entire $B(x_0,r)$. Writing this function as $\psi_0 + h_{x_0}$, we realize that $\psi_0$ coincides with $\psi$ on $B(x_0,r) \cap A$. The argument is therefore complete.
\end{proof}

\begin{rem}
In fact, we have shown that $\psi_m$ converge pointwise on all of $\Omega$, and that the limit $\psi = \lim_{m \rightarrow \infty} \psi_m$ is a semi-convex function (in the sense that in a small enough geodesically convex neighborhood, we may add to it a smooth function to obtain a geodesically convex function in that neighborhood).
\end{rem}

\begin{rem} \label{rem:psi-ae}
We will henceforth choose $\psi$ as constructed in Lemma \ref{lem:psi-ae}. This implies by Rademacher's theorem that:
\begin{enumerate}
\item The differential $\nabla \psi$ exists almost everywhere on $\Omega$.
\item $\abs{\nabla \psi}$ is bounded almost everywhere on compact subsets of $\Omega$.
\end{enumerate}
\end{rem}

Let $X^{(m)} : \Lambda_m \times [0,T] \rightarrow M$ denote the diffusion process on $\overline{\Omega}_m$ with reflection on the boundary generated by $\Delta_{(\Omega_m,\mu_m)}$, defined on the probability space $(\Lambda_m,\F_m,\P_m)$ and some fixed $T>0$. Let $P^{(m)}_t$ for $t \in [0,T]$ denote the semi-group associated to $X^{(m)}$. We will assume that the initial distribution of $X^{(m)}(0)$ is given by the stationary measure $\mu_m$, so that $X^{(m)}$ is a stationary process.

Using a forward-backward martingale decomposition due to Lyons and Zheng \cite{LyonsZhengForwardBackwardDecomposition} and a tightness criterion for stochastic processes with continuous paths, it can be shown as in \cite[p. 472]{WilliamsZheng} (see also \cite[p. 31]{BurdzyChenWeakConvergence}, \cite[pp. 248-257]{FukushimaBook}) with a minor adaptation to the Riemannian setting,
that there exists a subsequence $X^{(m_k)}$ which converges weakly (as measures on $[0,T] \times M$ endowed with the locally uniform topology) to some process $X$. By passing to a subsequence, let us assume that $X^{(m)}$ converges weakly to $X$. In particular, for any fixed $t \in [0,T]$, the law of $X^{(m)}(t)$ weakly converges to that of $X(t)$, and hence $X$ is also stationary with stationary measure $\mu$. Since $X(0)$ is by definition distributed according to $\mu$, there is a one-to-one correspondence between the spaces $L_2(\mu) := L_2(\Omega,\B(\Omega),\mu)$ and $L_2(\Lambda,\sigma(X(0)),\P)$, where $\sigma(X(0))$ is the $\sigma$-field generated by $X(0)$ and $X$ is defined on the space $\Lambda$ with probability measure $\P$. Consequently, we can define for any $t \in [0,T]$ the following (bounded) linear operator $P_t$ on $L_2(\mu)$:
\begin{equation} \label{eq:Pt-def}
 P_t f(X(0)) = E(f(X(t)) | X(0)) \;\;\; \forall f \in L_2(\mu) ~.
\end{equation}
In fact, as in \cite{WilliamsZheng}, it should be possible to show that $X$ is a continuous Markov process, that $P_t$ is a strongly continuous semi-group associated to it, and that the associated Dirichlet form is exactly given by:
\[
\E(f,g) := \int_\Omega \scalar{\nabla f,\nabla g} d\mu ~.
\]

However, we will not require all this information. We will only use the weak convergence (through a subsequence) of the processes $\set{X^{(m)}(0),X^{(m)}(t)}$, defined on the 2-point set $\set{0,t}$, to $\set{X(0),X(t)}$. We provide an elementary argument to deduce the tightness of this sequence (from which the former statement follows by Prokhorov's Theorem). Fixing a point $x_0 \in M$, since the process $X^{(m)}$ is stationary:
\begin{eqnarray*}
& & \P_m(d(X^{(m)}(0),x_0) > R \text{ or } d(X^{(m)}(t),x_0)>R) \\
&\leq& \P_m(d(X^{(m)}(0),x_0) > R) + \P_m(d(X^{(m)}(t),x_0) > R) = 2 \mu_m(d(x,x_0) > R) ~,
\end{eqnarray*}
so it remains to show that:
\[
 \lim_{R \rightarrow \infty} \sup_m \mu_m \set{d(x,x_0) > R} = 0 ~,
\]
which is easily seen to hold, since this holds for the (probability)
measure $\mu$, and $\mu_m$ converge to $\mu$ in total-variation.

We conclude that by passing to a subsequence, we may assume that $\set{X^{(m)}(0),X^{(m)}(t)}$ converges weakly to $\set{X(0),X(t)}$. In other words, for any $f,g$ continuous and bounded on $M$:
\[
 \lim_{m \rightarrow \infty} E(f(X^{(m)}(t)) g(X^{(m)}(0))) = E(f(X(t)) g(X(0))) ~,
\]
or equivalently:
\begin{equation} \label{eq:Pt-approx}
 \lim_{m \rightarrow \infty} \int (P^{(m)}_t f) g d\mu_m = \int (P_t f) g d\mu ~,
\end{equation}
where $P_t$ is the linear operator defined in (\ref{eq:Pt-def}). Clearly, this also extends to hold for all $f \in L_\infty(\mu)$ and $g \in L_1(\mu)$.

Using (\ref{eq:Pt-approx}), we can now transfer the known estimates (\ref{eq:L_q-bound}) and Corollary \ref{cor:Ledoux} on the semi-groups $P^{(m)}_t$ to the operators $P_t$. Indeed, given a bounded and smooth function $f$ on $\Omega$, we need to show that:
\begin{equation} \label{eq:Pt-1}
 \norm{f - P_t f}_{L_1(\mu)} \leq \sqrt{2t} \norm{ \abs{\nabla f} }_{L_1(\mu)} ~,
\end{equation}
and that:
\begin{equation} \label{eq:Pt-2}
\forall q \in [2,\infty] \;\;\; \norm{\abs{\nabla P_t f}}_{L_q(\mu)} \leq \frac{1}{\sqrt{2t}} \norm{f}_{L_q(\mu)} ~,
\end{equation}
using the same estimates (\ref{eq:Pt-1}) and (\ref{eq:Pt-2}) for $P^{(m)}_t$ and $\mu_m$ replacing $P_t$ and $\mu$, respectively (the latter are known to be true as described in Section \ref{sec:semi-group}, after approximating $f$  with functions in $\B(\Omega_m)$). Note that we interpret the second estimate (\ref{eq:Pt-2}) regarding $\nabla P_t f$ in the sense of distributions as described below, since this is all that is needed for the applications of Section \ref{sec:semi-group}. Writing:
\[
 \norm{f - P_t f}_{L_1(\mu)} = \sup\set{ \int (f - P_t f) g d\mu \; ; \; \norm{g}_{L_\infty} \leq 1 } ~,
\]
the estimate (\ref{eq:Pt-1}) immediately follows from the same estimate for $P^{(m)}_t$ and $\mu_m$ and the weak convergence (\ref{eq:Pt-approx}). The estimate (\ref{eq:Pt-2}) is harder to handle, since it involves the distributional gradient of $P_t(f)$. Setting $p = q^*$, we interpret:
\[
 \norm{\abs{\nabla P_t f}}_{L_q(\mu)} := \sup\set{\int \scalar{\nabla P_t f , g} d\mu \; ; \; \begin{array}{c} \text{$g$ is a smooth vector field with} \\ \text{compact support in $\Omega$} \; , \; \int |g|^p d\mu \leq 1 \end{array} } ~,
\]
and $\int \scalar{\nabla P_t f , g} d\mu$ is interpreted in the distributional sense (using integration by parts).
Let $g$ denote such a vector field as above. Since $\mu_m$ converges to $\mu$ it total-variation, it remains to show the first equality in:
\begin{eqnarray*}
\int \scalar{\nabla P_t f , g} d\mu &=& \lim_{m \rightarrow \infty} \int \sscalar{\nabla P^{(m)}_t f , g} d\mu_m \leq
\lim_{m \rightarrow \infty}  \snorm{|\nabla P^{(m)}_t f|}_{L_q(\mu_m)} \norm{\abs{g}}_{L_p(\mu_m)} \\
&\leq& \lim_{m \rightarrow \infty} \frac{1}{\sqrt{2t}} \norm{f}_{L_q(\mu_m)} =  \frac{1}{\sqrt{2t}} \norm{f}_{L_q(\mu)} ~.
\end{eqnarray*}
Let us write:
\begin{eqnarray*}
& & \abs{ \int \sscalar{\nabla P_t f , g} d\mu - \int \sscalar{\nabla P^{(m)}_t f , g} d\mu_m } \\
&\leq&
\abs{ \int \sscalar{\nabla P_t f , g} d\mu - \int \sscalar{\nabla P^{(m)}_t f , g} d\mu } +
\abs{ \int \sscalar{\nabla P^{(m)}_t f , g} d\mu - \int \sscalar{\nabla P^{(m)}_t f , g} d\mu_m } ~.
\end{eqnarray*}
Since $\snorm{|\nabla P^{(m)}_t f|}_{L_\infty} \leq \frac{1}{\sqrt{2t}} \norm{f}_{L_\infty}$ for all $m$, the convergence of $\mu_m$ to $\mu$ in total-variation implies that the second term converges to 0 as $m \rightarrow \infty$. To handle the first term, we intergrate by parts:
\begin{eqnarray*}
& & \abs{ \int \sscalar{\nabla P_t f , g} d\mu - \int \sscalar{\nabla P^{(m)}_t f , g} d\mu } =
\abs{ \int (P_t f - P_t^{(m)}f ) (\nabla \cdot g - \scalar{ g , \nabla \psi } ) d\mu } \\
&\leq&
\abs{ \int P_t f (\nabla \cdot g - \scalar{ g , \nabla \psi } ) d\mu -
      \int P^{(m)}_t f (\nabla \cdot g - \scalar{ g , \nabla \psi } ) d\mu_m } \\
&+&
\abs{ \int P^{(m)}_t f (\nabla \cdot g - \scalar{ g , \nabla \psi } ) d\mu_m -
      \int P^{(m)}_t f (\nabla \cdot g - \scalar{ g , \nabla \psi } ) d\mu } ~.
\end{eqnarray*}
It follows from Lemma \ref{lem:psi-ae} and Remark \ref{rem:psi-ae} that $\scalar{g, \nabla \psi}$ is a bounded function. This implies that the first term converges to 0 by $(\ref{eq:Pt-approx})$, and since $\snorm{P^{(m)}_t f}_{L_\infty} \leq \norm{f}_{L_\infty}$, the convergence of $\mu_m$ to $\mu$ in total-variation implies that the second term converges to 0 as well.

We conclude that the estimates (\ref{eq:Pt-1}) and (\ref{eq:Pt-2}) hold for the linear operators $P_t$ defined using the limiting process $X$, and we may use these operators in place of the diffusion semi-group in the relevant parts of the proof of Theorem \ref{thm:Orlicz-12} (and consequently Theorems \ref{thm:Orlicz-12-strong}, \ref{thm:Orlicz-summary} and \ref{thm:cap-iso}), so that the conclusion of these theorems remains valid for $\mu$ as above.

\setlinespacing{0.8} 
\setlength{\bibspacing}{0pt}

\bibliographystyle{plain}
\def\cprime{$'$}

\end{document}